\newtheorem{thm}{Theorem}[section]
\newtheorem{lma}[thm]{Lemma}
\newtheorem{cor}[thm]{Corollary}
\newtheorem{prp}[thm]{Proposition}
\newtheorem{exm}[thm]{Example}
\newtheorem{clm}[thm]{Claim}
\newtheorem{conj}[thm]{Conjecture}
\def\eps{{\varepsilon}}
\numberwithin{equation}{section}
\title{A Dirac type condition for properly coloured paths and cycles}
\author{Allan Lo}
\affil{School of Mathematics, University of Birmingham, \\Birmingham, B15 2TT, UK\\
\texttt{s.a.lo@bham.ac.uk}}
\begin{document}

\maketitle

\abstract{
Let $c$ be an edge-colouring of a graph~$G$ such that for every vertex~$v$ there are at least $d \ge 2$ different colours on edges incident to~$v$.
We prove that $G$ contains a properly coloured path of length~$2d$ or a properly coloured cycle of length at least~$d+1$.
Moreover, if $G$ does not contain any properly coloured cycle, then there exists a properly coloured path of length $3 \times 2^{d-1}-2$.
}

\section{Introduction} \label{sec:introduction}
All graphs considered in this paper are simple without loops unless stated otherwise.
Throughout this paper, $G$ is assumed to be a graph.
An \emph{edge-colouring}~$c$ of $G$ is an assignment of colours to the edges of~$G$.
An \emph{edge-coloured graph} is a graph $G$ with an edge-colouring~$c$ of $G$.

An edge-coloured graph $G$ is said to be \emph{properly coloured}, or \emph{p.c.} for short, if no two adjacent edges have the same colour.
Moreover, $G$ is \emph{rainbow} if every edge has distinct colour.
The \textit{colour degree}~$d^c_G(v)$ of a vertex~$v$ is the number of different colours on edges incident to~$v$.
The \textit{minimum colour degree} $\delta^c(G)$ of a graph~$G$ is the minimal $d^c_G(v)$ over all vertices~$v$ in~$G$.
In this article, we study the p.c. paths and p.c. cycles in edge-coloured graphs~$G$ with $\delta^c(G) \ge 2$.
For surveys regarding properly coloured subgraphs and rainbow subgraphs in edge-coloured graphs, we recommend Chapter~16 of~\cite{MR2472389} and \cite{MR2438857} respectively.

Grossman and H{\"a}ggkvist~\cite{MR701173} gave a sufficient condition for the existence of a p.c. cycle in edge-coloured graphs with two colours.
Later on, Yeo~\cite{MR1438622} extended the result to edge-coloured graphs with any number of colours.

\begin{thm}[Grossman and H{\"a}ggkvist~\cite{MR701173}, Yeo~\cite{MR1438622}] \label{thm:Yeo}
Let $G$ be a graph with an edge-colouring~$c$. 
If $G$ has no properly coloured cycle, then there is a vertex~$z$ in~$G$ such that no connected component of $G-z$ is joined to $z$ with edges of more than one colour. 
\end{thm}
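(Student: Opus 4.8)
The plan is to argue by contradiction using a single extremal object, a longest properly coloured path. First I would reduce to the case that $G$ is connected: if a vertex $z$ witnesses the conclusion for one connected component $H$ of $G$, then every other component of $G$ is a component of $G-z$ joined to $z$ by no edges at all, so $z$ also witnesses the conclusion for $G$. Call a vertex $v$ \emph{bad} if some component of $G-v$ is joined to $v$ by edges of at least two colours; the conclusion fails precisely when every vertex is bad. So assume $G$ has no p.c.\ cycle and, for contradiction, that every vertex of $G$ is bad. The goal is then to manufacture a p.c.\ cycle.

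Next I would fix a properly coloured path $P=v_0v_1\cdots v_k$ of maximum length and extract information from the endpoint $v_0$, writing $\alpha=c(v_0v_1)$. Maximality forbids prepending a new vertex, so every neighbour $w$ of $v_0$ with $c(v_0w)\neq\alpha$ must already lie on $P$. Since $v_1v_2\cdots v_k$ is connected and avoids $v_0$, it lies inside a single component $C_1$ of $G-v_0$; hence all edges leaving $v_0$ in a colour other than $\alpha$ enter $C_1$, and every other component of $G-v_0$ is joined to $v_0$ in colour $\alpha$ only. Because $v_0$ is bad, its witnessing component can therefore only be $C_1$, so $v_0$ sends at least two colours into $C_1$. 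In particular there is a chord $v_0v_j$ with $j\ge 2$ and $c(v_0v_j)\neq\alpha$.

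With such a chord in hand I would close a cycle. The closed walk $v_0v_1\cdots v_jv_0$ is a cycle that is properly coloured at $v_0$ (its two edges there have colours $\alpha$ and $c(v_0v_j)\neq\alpha$) and at every interior vertex (it follows $P$), so the \emph{only} place it can fail to be properly coloured is the closing vertex $v_j$, where one might have $c(v_{j-1}v_j)=c(v_0v_j)$. If any admissible choice of chord avoids this coincidence, the resulting cycle is p.c.\ and we contradict the hypothesis.

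The hard part is guaranteeing a conflict-free closure, and this is where I expect the real work to lie. The bad-vertex property only delivers two colours into $C_1$, and these may be exactly $\alpha$ together with the offending colour $\beta$ at $v_j$, so a single chord need not suffice. To push through I would exploit the freedom in the closing chord together with \emph{rotations}: when the closure at $v_j$ fails because $c(v_{j-1}v_j)=c(v_0v_j)=\beta$, the rotated sequence $v_{j-1}v_{j-2}\cdots v_1v_0v_jv_{j+1}\cdots v_k$ is again a properly coloured path of the same maximum length (it is proper at $v_0$ since $\alpha\neq\beta$ and at $v_j$ since $\beta\neq c(v_jv_{j+1})$), to whose new endpoint $v_{j-1}$ the same endpoint analysis applies. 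I would organise these rotations around a monovariant, for instance the index of the closing chord or the set of endpoint colours over all longest paths reachable by rotation, so that the process cannot continue indefinitely without producing a properly coloured cycle; an alternative is a minimal-counterexample induction in which, once the forced chord structure at an endpoint is identified, one deletes or contracts to a strictly smaller edge-coloured graph, applies the statement inductively, and lifts the separating vertex back to $G$. Either way, reconciling the alternating-colour constraint with the cycle-closing step is the crux of the argument.
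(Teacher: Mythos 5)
A preliminary remark: the paper does not prove Theorem~\ref{thm:Yeo} at all --- it is quoted from Grossman--H\"aggkvist and Yeo with citations --- so your attempt can only be measured against the known proofs in those references, not against anything in this paper.

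Your setup is sound as far as it goes: the reduction to connected $G$, the observation that maximality of the longest p.c.\ path $P=v_0v_1\cdots v_k$ forces every neighbour $w$ of $v_0$ with $c(v_0w)\neq\alpha$ onto $P$, the deduction that the badness of $v_0$ must be witnessed by the component containing $v_1\cdots v_k$ and hence yields a chord $v_0v_j$ of a second colour, and the check that the rotated path is again p.c.\ of the same length when the closure at $v_j$ fails. But the proof is not complete, and the missing piece is precisely the theorem's actual content. Rotations act on a finite set of longest p.c.\ paths, so \emph{all} that finiteness gives you is recurrence: the process can cycle among configurations forever without ever producing a p.c.\ cycle. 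Neither of your candidate monovariants is monotone. A rotation at pivot $v_j$ reverses the initial segment $v_0\cdots v_{j-1}$ and re-indexes the whole path, so ``the index of the closing chord'' at the new endpoint $v_{j-1}$ can jump arbitrarily in either direction; and the set of endpoint colours over rotation-reachable longest paths is a bounded quantity that can simply stabilise without any contradiction, since a bad endpoint only guarantees \emph{some} second colour into the path, not a fresh one. Your fallback --- minimal counterexample with deletion or contraction --- is equally unsubstantiated: deleting an edge can destroy both the badness hypothesis and the maximality of $P$, while contracting can create p.c.\ cycles absent from $G$, so the lifting step is not routine. It is telling that this very paper, in Lemma~\ref{lma:crossing2}, \emph{invokes} Theorem~\ref{thm:Yeo} in order to extract a p.c.\ cycle from a path with a crossing; if rotation structure alone yielded p.c.\ cycles easily, that appeal would be unnecessary. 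The published proofs do not use P\'osa rotations at all: Grossman--H\"aggkvist (two colours) and Yeo (in general) argue by induction with an extremal choice of a vertex together with a \emph{smallest} component of its deletion joined to it in at least two colours, and splice paths through that component to build the p.c.\ cycle. So the proposal, as written, has a genuine gap exactly at the step you yourself flag as the crux, and the hand-waved termination argument is not repairable in the form you suggest.
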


Bollob\'as and Erd\H{o}s~\cite{MR0411999} proved that if $n\ge 3$ and $\delta^c(K_n) \ge 7n/8$, then there exists a p.c. Hamiltonian cycle.
(A path or cycle is Hamiltonian if it spans all the vertices.)
They also asked the question of whether $\delta^c(K_n)\ge \lceil (n+5)/3 \rceil$ guarantees a p.c. Hamiltonian cycle.
Fujita and Magnant~\cite{FujitaMagnant09} showed that $\delta^c(K_n)= \lfloor n/2 \rfloor$ is not sufficient by constructing an edge-colouring~$c$ of~$K_{2m}$ with $\delta^c(K_{2m}) =m$, which has no p.c. Hamiltonian cycle.
Alon and Gutin~\cite{MR1610269} proved that for every $\eps >0$ and $n > n_0(\eps)$ if no vertex in an edge-coloured $K_n$ is incident with more than $(1-1/\sqrt{2} - \eps)n$ edges of the same colour, then there exists a p.c. Hamiltonian cycle.
This easily implies that if $\delta^c(K_n)\ge (1/\sqrt{2} + \eps)n$ then there is a p.c. Hamiltonian cycle.

Li and Wang~\cite{MR2519172} proved that if $\delta^c(G) \ge d \ge 2$, then $G$ contains a p.c. path of length~$2d$ or a p.c. cycle of length at least~$\lceil 2d/3 \rceil +1$.
We strengthen the bound of Li and Wang~\cite{MR2519172} to the best possible value.
Our proof begins with the rotation-extension technique of P\'{o}sa~\cite{MR0389666}, which we adapt for use on edge-coloured graphs.

\begin{thm} \label{thm:2d+1}
Every edge-coloured graph $G$ with $\delta^c(G) \ge 2$ contains a properly coloured path of length~$2\delta^c(G)$ or a properly coloured cycle of length at least~$\delta^c(G)+1$.
\end{thm}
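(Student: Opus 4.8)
The plan is to run a rotation--extension argument on a longest p.c.\ path, using the colour degree to produce many candidate chords at the endpoint. Write $d=\delta^c(G)$ and let $P=v_0v_1\cdots v_\ell$ be a longest p.c.\ path in $G$, with edge colours $a_i:=c(v_{i-1}v_i)$, so that $a_i\ne a_{i+1}$ for all $i$. If $\ell\ge 2d$ we are done, so assume $\ell\le 2d-1$, aim to produce a p.c.\ cycle of length at least $d+1$, and suppose for contradiction that no such cycle exists. First I would analyse the endpoint $v_\ell$. By maximality of $P$, every edge from $v_\ell$ to a vertex outside $P$ must have colour $a_\ell$ (otherwise it would extend $P$ to a longer p.c.\ path), so all of the at least $d-1$ colours at $v_\ell$ different from $a_\ell$ are realised by edges $v_\ell v_i$ with $0\le i\le \ell-2$. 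Since distinct colours force distinct neighbours, $v_\ell$ has at least $d-1$ neighbours among $v_0,\dots,v_{\ell-2}$, reached by pairwise distinct colours $\gamma_i:=c(v_\ell v_i)\ne a_\ell$.

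The central observation is a dichotomy for each such chord $v_\ell v_i$. The closed walk $v_iv_{i+1}\cdots v_\ell v_i$ is a p.c.\ cycle of length $\ell-i+1$ precisely when $\gamma_i\ne a_{i+1}$, since the other junction condition $\gamma_i\ne a_\ell$ holds by the choice of $\gamma_i$. A counting step shows at least one chord reaches far enough back: only $d-2$ positions lie in the range $\ell-d+1\le i\le \ell-2$, so among the $\ge d-1$ neighbours at least one has $i\le \ell-d$, which would give a cycle of length $\ge d+1$. Hence if any such far-back chord satisfies $\gamma_i\ne a_{i+1}$ we already have the desired cycle, a contradiction; so we may assume every far-back chord is \emph{obstructed}, i.e.\ $\gamma_i=a_{i+1}$. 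The pleasant point is that an obstructed chord is exactly the situation in which a rotation is legal: the rotated path $v_0\cdots v_iv_\ell v_{\ell-1}\cdots v_{i+1}$ is properly coloured because the new edge $v_iv_\ell$ has colour $\gamma_i=a_{i+1}\ne a_i$ at $v_i$ and $\gamma_i\ne a_\ell$ at $v_\ell$. Thus each obstructed chord yields a new longest p.c.\ path, of the same length $\ell$, with the new free endpoint $v_{i+1}$.

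To finish, I would iterate this in the style of P\'osa. Keeping one endpoint $v_0$ fixed, let $S$ be the set of all vertices occurring as the free endpoint of a length-$\ell$ p.c.\ path obtained from $P$ by repeated legal rotations; then $v_\ell\in S$, and the dichotomy above applies to every $w\in S$ with respect to its own path and its own edge colours. For each $w$ either some chord closes a p.c.\ cycle of length $\ge d+1$, contradicting our assumption, or every far-back chord is obstructed and hence rotatable, so the collection of reachable endpoints is closed under the associated rotation. Since no rotation can extend $P$ (maximality) and none closes a long cycle (assumption), $S$ must be closed under these rotations, and I would then derive a contradiction by counting: for each $w\in S$ its $\ge d-1$ colour classes force the neighbours of $w$ to lie in $S$ or in the immediate path-neighbourhood of $S$, and feeding $\delta^c(G)\ge d$ together with $\ell\le 2d-1$ into this P\'osa-type neighbourhood estimate overloads the short path. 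The hard part is precisely this last step, namely establishing the coloured analogue of P\'osa's lemma: a legal rotation must simultaneously avoid the colour of the edge it deletes at $v_i$ and the colour of the last edge at the current endpoint, so one must track, for each $w\in S$, which path-neighbours are actually reachable under these two colour constraints, and only then convert the colour-degree hypothesis into the quantitative bound that forces a cycle of length $d+1$.
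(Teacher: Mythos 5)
There is a genuine gap. Your opening moves are sound and match the paper's starting point: maximality of $P$ forces at least $d-1$ distinctly coloured chords $v_\ell v_i$ into the path, a chord with $i\le \ell-d$ exists whenever $\ell\le 2d-1$, a far-back chord with $\gamma_i\ne a_{i+1}$ closes a p.c.\ cycle of length $\ge d+1$, and an obstructed chord ($\gamma_i=a_{i+1}$) permits a legal rotation. But your final paragraph is a declared placeholder rather than an argument, and it defers exactly the step where the whole difficulty of the theorem lives. The ``coloured P\'osa lemma'' you invoke does not follow by routine counting, for two concrete reasons. First, your dichotomy is not exhaustive in the way the expansion argument needs: a near chord ($i>\ell-d$) with $\gamma_i\ne a_{i+1}$ closes only a p.c.\ cycle of length $\le d$, which is permitted by hypothesis, and if moreover $\gamma_i=a_i$ it does not permit a rotation either (the correct legality condition is $\gamma_i\ne a_i$; your condition $\gamma_i=a_{i+1}$ is sufficient but not necessary), so most chords may yield neither progress nor contradiction. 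Consequently you guarantee only \emph{one} legal rotation per endpoint, whereas P\'osa's scheme needs control of essentially all path-neighbours of endpoints to conclude $N(S)\subseteq S^-\cup S\cup S^+$ and then exploit expansion. Second, ``obstructed'' is not a stable property: each rotation reverses a segment, so the successor colours $a_{j+1}$ and hence the obstruction/closing conditions are recomputed along the new path, and the set of reachable endpoints can stagnate without any quantitative consequence. Nothing in your sketch converts $\delta^c(G)\ge d$ and $\ell\le 2d-1$ into a contradiction.

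It is worth noting that the paper does not complete your sketch by a counting estimate; it replaces that step entirely. It dichotomizes on whether some path in $\mathcal{R}(P)$ has a \emph{crossing}: if no rotated path has a crossing, the two endpoint colour neighbourhoods meet in at most one vertex inside $P$, giving $|P|\ge 2d+1$, i.e.\ length $\ge 2d$, immediately. If a crossing exists, the case $d=2$ is handled by Yeo's theorem (Theorem~1.1) applied to $G[V(P)]$, which is $2$-connected precisely because of the crossing (Lemma~3.1); and the case $d\ge 3$ is the long structural Lemma~3.2, which produces not just a long p.c.\ cycle but a cycle-plus-spanning-path decomposition of $V(P)$, proved via Lemma~2.2, a maximality choice on $|S|$, an edge-minimal counterexample, and the case analysis of Claims~3.3--3.6. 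That machinery is the substitute for the P\'osa estimate you hoped would ``overload the short path,'' and without something of comparable strength your proposal does not close.
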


Note that a disjoint union of rainbow $K_{d+1}$ has minimum colour degree~$d$.
The longest p.c. path and p.c. cycle have lengths $d$ and $d+1$ respectively.
Together with the following example, we conclude that Theorem~\ref{thm:2d+1} is best possible.

\begin{exm} \label{exm:1}
For integers $p \ge d \ge 2$, define the edge-coloured graph $\widetilde{G}(d;p)$ as follows: take a new vertex~$x$ and $p$ vertex-disjoint rainbow copies of $K_{d}$, $H_1, H_2, \dots, H_p$, add an edge of new colour $c_j$ between $x$ and every vertex of $H_j$ for each~$j$.
It is easy to see that $\delta^c(\widetilde{G}(d;p)) = d$.
Note that $\widetilde{G}(d;p)$ consists of $p$ copies of $K_{d+1}$ intersecting at one vertex, namely $x$.
Hence, $\widetilde{G} (d,p)$ has no path of length $2d+1$.
Every cycle of length at least $d+1$ in $G$ contains $x$ and is not properly coloured.
Therefore, $\widetilde{G}(d;p)$ does not contain p.c. cycles of length at least~$d+1$.
\end{exm}

In a non-edge-coloured graph~$G$, it is a trivial fact that if $\delta(G) \ge 2$, then $G$ contains a cycle.
However, there exist edge-coloured graphs $G$ with $\delta^c(G) \ge 2$ that do not contain any p.c. cycles, e.g. $\widetilde{G}(2;p)$.
Given an integer~$k\ge 3$ and an edge-colouring~$c$ of a graph~$G$ such that no p.c. cycle in~$G$ has length at least~$k$, it is natural to ask for the length of the longest p.c. path in $G$.
We prove that the longest p.c. path grows exponentially with~$\delta^c(G)$ for fixed $k$.

\begin{thm} \label{thm:k<d}
For integers $k \ge 3$, every edge-coloured graph $G$ with $\delta^c(G) \ge \lceil 3k/2 \rceil-3$ contains a properly coloured path of length~$k2^{\delta^c(G)-\lceil 3k/2 \rceil+4}-2$ or a properly coloured cycle of length at least~$k$.
\end{thm}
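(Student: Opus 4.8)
The plan is to prove, by induction on $d:=\delta^c(G)$, the following rooted strengthening: if $\delta^c(G)\ge d\ge t$, where $t:=\lceil 3k/2\rceil-3$, and $G$ has no p.c. cycle of length at least $k$, then for every vertex $v$ and every colour $\phi$ there is a p.c. path of length at least $L_d:=k\,2^{\,d-t+1}-2$ that starts at $v$ and whose first edge has colour different from $\phi$. Since $L_{\delta^c(G)}=k\,2^{\,\delta^c(G)-\lceil 3k/2\rceil+4}-2$, dropping the root yields Theorem~\ref{thm:k<d}. The whole point is the identity $L_d=2L_{d-1}+2$: each increment of the colour degree must roughly double the length, and $2(L_{d-1}+1)$ is exactly the length of two paths of length $L_{d-1}$ joined through one extra vertex by two edges.

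For the base case $d=t$ I would invoke Theorem~\ref{thm:2d+1}. With $\delta^c(G)\ge t\ge k-1$ it produces a p.c. path of length $2t\ge 2k-2=L_t$ or a p.c. cycle of length at least $t+1\ge k$; in the latter case we are done, and note that $t\ge k-1$ is all Theorem~\ref{thm:2d+1} needs (for $k\in\{3,4\}$ this already forces $t=\lceil 3k/2\rceil-3$). To upgrade this to the rooted conclusion I would run P\'osa's rotation--extension on a longest rooted p.c. path $P=v_0\cdots v_\ell$ with $v_0=v$: if $\ell<2k-2$, then every edge from the moving endpoint $v_\ell$ to a vertex $v_i$ with $i\le\ell-k+1$ must repeat either the colour of $v_iv_{i+1}$ or that of $v_{\ell-1}v_\ell$, for otherwise $v_i v_{i+1}\cdots v_\ell v_i$ is a p.c. cycle of length $\ge k$; the same holds for the new endpoints produced by rotations. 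Counting the colours available at these endpoints against the few ``free'' back-edges then contradicts $d^c\ge t$.

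For the inductive step ($d>t$) I would isolate a splitting vertex. When $k=3$, ``no p.c. cycle of length $\ge k$'' means no p.c. cycle at all, so Theorem~\ref{thm:Yeo} supplies a vertex $z$ each of whose components of $G-z$ is joined to $z$ by a single colour. Choose two components $C_1,C_2$ with distinct joining colours $a_1\ne a_2$ (they exist since $d^c(z)\ge d\ge 2$) and fix $b_j\in N(z)\cap C_j$. Inside $G[C_j]$ a vertex loses only the colour $a_j$, so $\delta^c(G[C_j])\ge d-1\ge t$, and $C_j$ inherits the cycle hypothesis; the induction, rooted at $b_j$ with forbidden first colour $a_j$, yields a p.c. path $Q_j$ of length $\ge L_{d-1}$ starting at $b_j$ with first colour $\ne a_j$. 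Reversing $Q_1$, appending $b_1z$ (colour $a_1$) and $zb_2$ (colour $a_2$), and then $Q_2$, gives a p.c. path of length $\ge 2L_{d-1}+2=L_d$, the junctions at $b_1,z,b_2$ being properly coloured by construction.

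The main obstacle is twofold. First, for $k>3$ short p.c. cycles are permitted, so Theorem~\ref{thm:Yeo} no longer applies; the real work is a generalisation of it asserting that whenever $\delta^c(G)\ge\lceil 3k/2\rceil-3$ and $G$ has no p.c. cycle of length $\ge k$, a splitting vertex $z$ of the above kind still exists. This is precisely where the extra colour-degree slack (the gap between $\lceil 3k/2\rceil-3$ and $k-1$) is spent, and I expect it to follow from rotation--extension on a longest p.c. path, showing that the failure to find such a $z$ forces a p.c. cycle of length $\ge k$. Second, the concatenation needs the \emph{inner} endpoint of each component path to lie in $N(z)$ while the outer endpoints remain free, whereas the rooted induction naturally outputs a path from a prescribed vertex to an \emph{uncontrolled} endpoint; reconciling these requires a further rotation argument inside each $C_j$ to move the free endpoint onto $N(z)$ without shortening the path or creating a long p.c. cycle. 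I would treat these two points as the technical heart and keep the concatenation bookkeeping routine.
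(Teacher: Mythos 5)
Your inductive scheme rests on a rooted strengthening that is simply false, and the counterexample is this paper's own extremal construction. Take $k=3$, so $t=\lceil 3k/2\rceil-3=2$, and consider $\widetilde{G}(2;p)$ from Example~\ref{exm:1}: it has $\delta^c=2=t$ and no p.c.\ cycle at all, so your base case asserts a p.c.\ path of length $L_2=3\cdot 2^{1}-2=4$ starting at \emph{every} vertex with prescribed forbidden first colour. But every p.c.\ path starting at the centre $x$ enters a single copy of $K_2$ and dies there, so it has length at most $2$. The same failure occurs for $k=4$ at $d=t=3$ with $\widetilde{G}(3;p)$ (rooted length at the centre is $3$, while $L_3=6$), and it propagates to all $d$ via the iterated construction of Proposition~\ref{prp:upper}: rooted at the top centre, the longest p.c.\ path has length roughly \emph{half} of $k2^{d-t+1}-2$. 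This is not a repairable constant: if you weaken the hypothesis to the true worst-case rooted length (about $k2^{d-t}-1$), your concatenation through $z$ yields only $2\bigl(k2^{d-1-t}-1\bigr)+2=k2^{d-t}$, which falls short of the target $k2^{d-t+1}-2$ by a factor of $2$ --- the doubling is destroyed exactly at the cut vertex, and you cannot evade bad roots, because in the extremal graph \emph{every} vertex of $C_j$ (including the centre of $C_j$'s own sub-construction) lies in $N(z)$. Your second load-bearing step, a Yeo-type splitting vertex when $k>3$ and short p.c.\ cycles are permitted, is explicitly left as something you ``expect''; no such structure theorem is proved here or in the paper, so even apart from the false hypothesis the inductive step is unsupported for $k>3$.

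The paper's induction on $d$ doubles the path in a different way that avoids rooting altogether. It takes a maximal p.c.\ path $P=(1,\dots,l)$ and first shows $P$ has no crossing, since by Lemmas~\ref{lma:crossing2} and~\ref{lma:crossing} a crossing would force a p.c.\ cycle of length at least $d\ge k$. Then, using P\'osa rotations (Proposition~\ref{prp:figj}), it sets $x=\max X$ and $y=\min Y$, where $X$ and $Y$ are the sets of achievable endpoints, and proves the key fact that $P_x=(1,\dots,x)$ and $P_y=(y,\dots,l)$ are themselves \emph{maximal} p.c.\ paths in $G-\{x+1\}$ and $G-\{y-1\}$ respectively, graphs of minimum colour degree at least $d-1$ with the same cycle restriction. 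The induction hypothesis (Lemma~\ref{lma:k<dstronger}, quantified over all maximal paths, with no root) then gives $l\ge |P_x|+|P_y|+1\ge 2\bigl(k2^{d-\lceil 3k/2\rceil+3}-1\bigr)+1$, which is the required doubling; the colour-degree slack $d\ge\lceil 3k/2\rceil-2$ is spent inside the crossing analysis of $P_x$ (the counting $k-1\ge |C|\ge 3(d-k+1)$ and its relatives), not on any cut-vertex structure. If you want to salvage your outline, you would need to replace the universal rooted hypothesis by a statement about maximal paths or about existence of a good root among the neighbours of $z$, and you would still owe a proof of the splitting-vertex claim for $k>3$; the paper's rotation-based splitting shows both obstacles can be bypassed simultaneously.
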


On the other hand, we show that there exist edge-coloured graphs~$G$, which only contain p.c. paths and p.c. cycles of lengths at most $k 2^{\delta^c(G)-k+2}-2$ and~$k-1$ respectively.

\begin{prp} \label{prp:upper}
For integers $d \ge k-1 \ge 2$, there exist infinitely many edge-coloured graphs $G$ with $\delta^{c}(G) \ge d$ such that each properly coloured path has length at most $k 2^{d-k+2}-2$ and no properly coloured cycle has length longer than $k-1$.
\end{prp}

For $k=3$, Theorem~\ref{thm:k<d} gives the following simple corollary, which is best possible by Proposition~\ref{prp:upper}.

\begin{cor} \label{cor:k=3}
Every edge-coloured graph $G$ contains a properly coloured path of length~$3 \times 2^{\delta^c(G)-1}-2$ or a properly coloured cycle.
\end{cor}

For $d+1 \ge k \ge 3$, we conjecture the following result.

\begin{conj} \label{conj:k<d}
For integers $k \ge 3$, every edge-coloured graph $G$ with $\delta^c(G) \ge k-1$ contains a properly coloured path of length~$k2^{\delta^c(G)-k+2}-2$ or a properly coloured cycle of length at least~$k$.
\end{conj}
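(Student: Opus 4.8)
The plan is to prove the statement by induction on $d=\delta^c(G)$, exploiting the fact that the target length satisfies the recursion $f(d)=2f(d-1)+2$, where $f(d)=k2^{d-k+2}-2$. Indeed $2f(d-1)+2=2(k2^{d-1-k+2}-2)+2=k2^{d-k+2}-2=f(d)$, so a clean doubling at each step would suffice. Throughout I may assume $G$ contains no p.c. cycle of length at least $k$, since otherwise we are done. The base case $d=k-1$ is already delivered by Theorem~\ref{thm:2d+1}: with $\delta^c(G)\ge k-1$ we obtain either a p.c. cycle of length at least $k$ or a p.c. path of length $2(k-1)=k2^{(k-1)-k+2}-2=f(k-1)$.

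For the inductive step I would take a longest p.c. path $P=v_0v_1\cdots v_\ell$ and analyse its endpoint $v_0$ by means of the rotation-extension technique adapted to edge colourings (as in the proof of Theorem~\ref{thm:2d+1}). Writing $C_0=c(v_0v_1)$, maximality of $P$ forces every neighbour $w$ of $v_0$ with $c(v_0w)\ne C_0$ to lie on $P$, say $w=v_i$. The absence of a p.c. cycle of length at least $k$ then restricts the colours on such chords: whenever $i\ge k$ we must have $c(v_0v_i)\in\{C_0,\,c(v_{i-1}v_i)\}$, for otherwise $v_0v_1\cdots v_iv_0$ is a p.c. cycle of length $i\ge k$. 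This colour restriction is the engine of the recursion: it confines the freely usable colours at $v_0$ to a bounded initial segment of $P$ while the remaining colours are forced to repeat along the path, which is precisely the structure that should allow one to split $P$ (or $G$) into two parts of comparable length.

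Concretely, I would try to locate a colour class or a small vertex cut $Z$ whose deletion lowers the minimum colour degree to exactly $d-1$ while preserving the no-long-cycle hypothesis, apply the induction hypothesis to each resulting piece to obtain two p.c. paths of length $f(d-1)$, and then reconnect them through an edge of the \emph{extra} colour at $v_0$ (or through $Z$) into a single p.c. path of length $2f(d-1)+2=f(d)$. A localised form of Yeo's structural result, Theorem~\ref{thm:Yeo}, applied to the subgraph spanned by the long chords, should supply the separating vertex or colour needed to perform this split.

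The main obstacle, and the reason this remains a conjecture rather than a consequence of Theorem~\ref{thm:k<d}, is controlling the colour degree under deletion. Removing a colour class or a cut vertex lowers $d^c(v)$ by one at well-behaved vertices but can in principle drop it by more at others; Theorem~\ref{thm:k<d} circumvents this by starting the doubling recursion only once $\delta^c(G)\ge\lceil 3k/2\rceil-3$, thereby carrying a buffer of $\lceil k/2\rceil-2$ spare colours to absorb such losses. To reach the optimal threshold $k-1$ one must perform the split without wasting any colour degree, for instance by choosing the deleted colour to be one that is, at every low-degree vertex, either absent or replaceable by a rotation, so that the induction can be launched directly from the sharp base case $d=k-1$. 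Establishing such a \emph{loss-free} reduction step is where I expect the real difficulty to lie.
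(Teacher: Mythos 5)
This statement is Conjecture~\ref{conj:k<d}: the paper does not prove it, and explicitly leaves it open except for the extremal cases $k=3$ (Corollary~\ref{cor:k=3}) and $k=d+1$ (Theorem~\ref{thm:2d+1}); what the paper actually establishes is the weaker Theorem~\ref{thm:k<d}, via Lemma~\ref{lma:k<dstronger}, under the stronger hypothesis $\delta^c(G)\ge\lceil 3k/2\rceil-3$. Your outline is essentially the same scheme as Lemma~\ref{lma:k<dstronger} — the recursion $f(d)=2f(d-1)+2$, the base case from Theorem~\ref{thm:2d+1}, a maximal p.c. path analysed by rotations, and an induction after deleting a vertex to drop the minimum colour degree by one — so as far as it goes it retraces the known argument. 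But it is not a proof: the decisive step, your ``loss-free reduction'', is asserted rather than carried out, and you say yourself that this is where the difficulty lies. A sketch whose key step is an open problem cannot close the conjecture; at best you have re-derived the cases the paper already records.

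Two concrete points beyond the admitted gap. First, you misdiagnose the bottleneck. Deleting a single vertex (as the paper does with $x+1$) removes at most one colour from each remaining vertex's palette, so $\delta^c(G_x)\ge d-1$ automatically; likewise deleting one colour class costs each vertex at most one colour. Controlling colour degree under deletion is therefore not the obstacle. The actual obstruction in Lemma~\ref{lma:k<dstronger} is certifying that the truncated path $P_x=(1,\dots,x)$ is \emph{maximal} in $G-\{x+1\}$, and in particular handling the case where some $P'\in\mathcal{R}'(P_x)$ has a crossing: there the paper can only reach a contradiction through the counting $k-1\ge|C|\ge 3(d-k+1)$ (built from the bounds $|S|\ge|B|-k+2$ and $|S'|\ge|A|-k+2$), which is precisely what forces $d\ge\lceil 3k/2\rceil-2$ in the inductive step. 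Any proof at the sharp threshold $k-1$ must replace this crossing analysis, not merely choose the deleted object more carefully. Second, your recombination step is unsubstantiated: joining two p.c. paths of length $f(d-1)$ ``through an edge of the extra colour'' into one p.c. path of length $2f(d-1)+2$ requires two connecting edges with proper colours at both junctions, and nothing in your setup supplies them; in the paper no reconnection is needed, because $P_x$, the vertex $x+1$, and $P_y$ are already consecutive pieces of the single maximal path $P$, whence $l\ge|P_x|+|P_y|+1$ directly. Similarly, invoking Theorem~\ref{thm:Yeo} ``on the subgraph spanned by the long chords'' is only a hope: Yeo's theorem applies to graphs with no p.c. cycle at all, whereas here short p.c. cycles (length $<k$) may abound, so some genuinely new localisation of that result would itself need proof.
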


This conjecture is true for $k=3$, $k=4$ and $k=\delta^c(G)+1$ by Corollary~\ref{cor:k=3}, Theorem~\ref{thm:k<d} and Theorem~\ref{thm:2d+1} respectively.
Moreover, if Conjecture~\ref{conj:k<d} is true, then it is best possible by Proposition~\ref{prp:upper}.

We are also interested in the longest p.c. path in~$G$ with $\delta^c(G) = d$ without any constraint on p.c. cycles.
Trivially, if $G$ is a disjoint union of rainbow~$K_{d+1}$, then the longest p.c. path has length $d$.
We add the assumption that $G$ is connected to avoid the trivial answer just given.
The following example shows that there are connected graphs whose p.c. paths have length at most $\lfloor 3d/2 \rfloor$.

\begin{exm} \label{exm:longpath}
For integers $d \ge 1$ and $n \ge 3d/2$, define the edge-coloured graph $\widehat{G} = \widehat{G}(d;n)$ on $n$ vertices as follows.
Partition vertex set of $G$ into $X$ and $Y$ with $X = \{x_1, x_2,\dots, x_d\}$.
The subgraph induced by vertex set $X$ is a rainbow $K_d$.
The subgraph induced by vertex set $Y$ is empty.
For each $1 \le i \le d$, add an edge of new colour~$c_i$ between $x_i$ and each $y \in Y$.
By our construction, $\delta^c(\widehat{G}) = d$.
Note that every p.c. path in $\widehat{G}$ with both endpoints in $Y$ must contain at least two vertices in~$X$.
Thus, every p.c. path in $\widehat{G}$ is of length at most $|X| + \lfloor |X|/2 \rfloor = \lfloor 3d/2 \rfloor$.
\end{exm}

We believe that the example above is best possible and conjecture the following.

\begin{conj} \label{conj:path}
Every edge-coloured connected graph $G$ contains a properly coloured Hamiltonian cycle or a properly coloured path of length $\lfloor 3\delta^c(G)/2 \rfloor$.
\end{conj}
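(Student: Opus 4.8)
The plan is to argue by contradiction, combining the rotation--extension framework behind Theorem~\ref{thm:2d+1} with a refined colour count that recovers the structure of Example~\ref{exm:longpath}. Write $d=\delta^c(G)$; we may assume $d\ge 2$ and that $G$ is connected, since otherwise the statement is trivial. Suppose for contradiction that $G$ has no p.c.\ Hamiltonian cycle and that a longest p.c.\ path $P=v_0v_1\cdots v_\ell$ satisfies $\ell\le\lfloor 3d/2\rfloor-1$. Since $\ell<2d$, Theorem~\ref{thm:2d+1} forces the cycle alternative, so $G$ contains a p.c.\ cycle of length at least $d+1$. Thus the genuinely hard regime is exactly the one in which long p.c.\ cycles exist but no p.c.\ \emph{Hamiltonian} cycle does, and deleting an edge of such a cycle only yields a path of length about $d$, well short of the target $\lfloor 3d/2\rfloor$.

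First I would extract the maximality constraints at the ends of $P$. Because $P$ is longest, no vertex outside $P$ can be joined to $v_0$ by an edge whose colour differs from $c(v_0v_1)$, so all but at most one of the $\ge d$ colours at $v_0$ are realised by chords $v_0v_i$ with $v_i\in V(P)$; the same holds at $v_\ell$. Applying P\'osa rotations along these chords generates a family of p.c.\ paths on the vertex set $V(P)$ with varying endpoints, and I would let $S$ denote the set of all vertices occurring as an endpoint of some rotated path. Maximality then forbids, for every $s\in S$, an outside neighbour joined to $s$ by an edge of suitable colour; via $\delta^c(G)\ge d$ this forces each $s$ to send most of its colours back into $V(P)$, which is the mechanism that will eventually bound $|V(P)|$ from below and yield the contradiction.

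The heart of the argument is a structural dichotomy for $S$. If two rotated endpoints $x,y\in S$ are joined by an edge whose colour is compatible with both incident path-edges, the associated p.c.\ path closes into a p.c.\ cycle spanning $V(P)$; used together with an outside neighbour, this should either lengthen the path beyond $\lfloor 3d/2\rfloor$ or, when $V(P)=V(G)$, build the forbidden p.c.\ Hamiltonian cycle. In the remaining case $S$ is colour-independent, and I would mimic Example~\ref{exm:longpath}: designate the vertices serving as internal rotation pivots as an $X$-like core and the endpoints as a $Y$-like part, then prove that each excursion of the path out of the core and back must consume at least two core vertices (since a single core vertex $x_i$ reached and left along edges of the same colour $c_i$ violates properness). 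As the core has size at most $d$ while supplying the colour degree, counting colours against $\lfloor 3d/2\rfloor$ should close the contradiction.

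The main obstacle, and the reason the statement is only conjectured, is precisely the factor $3/2$: naive double counting of colours at the ends of a longest path delivers length $d$ from one end and $2d$ from two ends via Theorem~\ref{thm:2d+1}, but never the intermediate value $\lfloor 3d/2\rfloor$, so a genuine halving phenomenon must be captured. Making the ``two core vertices per excursion'' heuristic rigorous requires controlling how colours propagate through many simultaneous rotations, in particular ruling out that the matching constraints $c(v_0v_i)=c(v_{i-1}v_i)$ at pivots conspire to block both the closing of a cycle and the lengthening of the path. I expect a successful proof will need an absorbing-type step to attach the vertices outside $V(P)$, together with a careful case analysis of the colours along the cycle supplied by Theorem~\ref{thm:2d+1}, and that the \emph{Hamiltonicity} alternative rather than mere long cycles is what is essential to bridge the remaining gap.
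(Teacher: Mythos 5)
The statement you are trying to prove is Conjecture~\ref{conj:path}, which the paper explicitly leaves open: the author verifies it only for $\delta^c(G)\le 5$ by case analysis and, for larger $d$, proves just the weaker Theorem~\ref{thm:path} (a p.c.\ path of length $6\delta^c(G)/5-1$ or a p.c.\ Hamiltonian cycle), via a cycle-plus-path decomposition from Lemma~\ref{lma:crossing} and an in-degree count in an auxiliary directed graph. Your text is a proof plan rather than a proof, and you say as much (``should close the contradiction'', ``I expect a successful proof will need\dots''), so as it stands there is nothing to check against: no step after the first paragraph is actually established.

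Two of the plan's pivotal steps are concretely broken. First, the dichotomy for the endpoint set $S$: an edge of compatible colour between endpoints $x,y$ of two \emph{different} rotated paths does not close a p.c.\ cycle spanning $V(P)$; a cycle is obtained only from a crossing-type configuration on a single path, and there the matching constraints $c(v_0v_i)=c(v_{i-1}v_i)$ at pivots genuinely can conspire --- this is exactly what Lemma~\ref{lma:keyproperties} and the long case analysis inside Lemma~\ref{lma:crossing} are fighting, and even that machinery only yields a cycle of length $\ge d$ (or $\ge d+1$ unless $|P|\ge 2d+1$), which is short of what your contradiction needs. Second, the ``$X$-like core of size at most $d$'' has no justification: Example~\ref{exm:longpath} is an extremal \emph{construction} showing the bound $\lfloor 3d/2\rfloor$ would be tight, not a structure theorem, and in an arbitrary graph with no p.c.\ Hamiltonian cycle there is no a priori core supplying all the colour degree. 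Your argument in effect assumes the extremal structure it is supposed to derive; bridging from the cycle of length $d+1$ given by Theorem~\ref{thm:2d+1} to a path of length $\lfloor 3d/2\rfloor$ is precisely the open gap, and the paper's own method caps out at $6d/5-1$.
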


This conjecture can be easily verified for $\delta^c(G) \le 3$.
By case analysis, we can show that $G$ contains a path of length $\delta^c(G)+2$ if $|G| \ge \delta^c(G) + 3$ and $\delta^c(G) \ge 4$.
Therefore, the conjecture is true for $\delta^c(G) \le 5$.
However, for $\delta^c(G) \ge 6$ we are only able to show that $G$ contains a p.c. path of length at least $ 6\delta^c(G)/5 -1$ or a p.c. Hamiltonian cycle.

\begin{thm} \label{thm:path}
Every edge-coloured connected graph $G$ contains a properly coloured Hamiltonian cycle or a properly coloured path of length at least $6\delta^c(G)/5-1$.
\end{thm}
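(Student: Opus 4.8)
The plan is to combine Theorem~\ref{thm:2d+1} with P\'osa-style rotations adapted to the edge-coloured setting. Write $d=\delta^c(G)$. If $G$ is disconnected it has no Hamiltonian cycle, and since the required path lies in a single component, itself connected with minimum colour degree at least $d$, we may assume $G$ is connected. If $G$ contains a p.c. path of length $2d$ we are done, because $2d\ge 6d/5-1$; so by Theorem~\ref{thm:2d+1} we may assume $G$ contains a p.c. cycle of length at least $d+1$ but no p.c. path of length $2d$. Assuming in addition that $G$ has no p.c. Hamiltonian cycle, it suffices to show that a longest p.c. path $P=v_0v_1\cdots v_\ell$ has length $\ell\ge 6d/5-1$.

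The first ingredient is an endpoint analysis. Since $P$ is a longest p.c. path, it cannot be extended at either end, so every edge joining $v_0$ to $V(G)\setminus V(P)$ has colour $c(v_0v_1)$; otherwise prepending such an edge yields a longer p.c. path. As $d^c(v_0)\ge d$, it follows that at least $d-1$ distinct colours, all different from $c(v_0v_1)$, occur on edges from $v_0$ to the interior $\{v_2,\dots,v_\ell\}$, and symmetrically for $v_\ell$. Each such chord $v_0v_i$ is a candidate pivot: the rotated walk $v_{i-1}v_{i-2}\cdots v_0v_iv_{i+1}\cdots v_\ell$ is again a p.c. path on the vertex set $V(P)$, with fixed endpoint $v_\ell$ and new endpoint $v_{i-1}$, provided $c(v_0v_i)\ne c(v_0v_1)$ and $c(v_0v_i)\ne c(v_iv_{i+1})$.

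Next I would iterate the rotation. Fixing $v_\ell$ and rotating from the moving end produces a set $S$ of vertices that can serve as endpoints of longest p.c. paths on $V(P)$. Applying the endpoint analysis to every vertex of $S$, each of which again has at least $d-1$ interior colours with all outside edges monochromatic, forces many chords among the positions near $S$ on the path. The aim is to locate two endpoints, or an endpoint together with a neighbour of $v_\ell$, joined by a colour-compatible edge; closing along it gives a p.c. cycle $C$ with $V(C)=V(P)$. If $V(P)\ne V(G)$, connectivity produces an edge from $C$ to an outside vertex, and reading $C$ as a path from the attachment point yields a p.c. path strictly longer than $P$, a contradiction. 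Hence $V(P)=V(G)$ and $C$ is a p.c. Hamiltonian cycle, contrary to assumption; so no closing edge can exist, and the resulting restriction on how the chords at each endpoint must avoid creating one is what bounds $\ell$ from below.

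The main obstacle is that, unlike the uncoloured P\'osa argument, every rotation and every attempted closure carries two colour constraints, one at the pivot and one at the released endpoint, so a constant fraction of candidate chords may be unusable. The delicate point is to account, across the whole set $S$, for how these paired constraints together with the parity of $\ell$ interact, and to show that simultaneously avoiding every closing edge is impossible once $\ell<6d/5-1$. Quantifying this trade-off, essentially a weighted double count of the available colours against the available path positions whose worst case produces the ratio $6/5$, is where the bulk of the work lies, and it is precisely this step that separates the proven bound $6d/5$ from the value $\lfloor 3\delta^c(G)/2\rfloor$ predicted by Conjecture~\ref{conj:path}.
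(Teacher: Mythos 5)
Your plan correctly handles the easy reductions (restricting to a connected component, disposing of the case of a p.c.\ path of length $2d$ via Theorem~\ref{thm:2d+1}, and the observation that a spanning p.c.\ cycle on $V(P)$ is either Hamiltonian or extends to a longer path by connectivity), but it has a genuine gap at exactly the point where the bound $6d/5-1$ must be produced. Your final two paragraphs describe a programme --- iterate coloured rotations to build an endpoint set $S$, then perform a ``weighted double count of the available colours against the available path positions'' --- without ever defining the count, deriving a single inequality, or showing why simultaneously avoiding every closing edge becomes impossible once $\ell<6d/5-1$. This is not a routine verification that can be waved through: in the coloured setting the usable pivots at the new endpoint $v_{i-1}$ depend on colour constraints created by earlier rotations, endpoints do not accumulate as in P\'osa's uncoloured argument, and each attempted closure carries compatibility conditions at both ends of the chord. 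The paper's Section~3 (Lemma~\ref{lma:keyproperties}, Lemma~\ref{lma:crossing} and its three claims) shows how heavy the bookkeeping is for even one or two rotations, so the assertion that an unspecified ``worst case produces the ratio $6/5$'' is precisely the missing proof, not a detail.

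It is also worth recording that the paper's actual argument avoids iterated rotation entirely. Since $l<6d/5$ forces $P$ to have a crossing, Lemma~\ref{lma:crossing} yields a p.c.\ cycle $C=(i_1,\dots,i_p,i_1)$ with $p\ge d+1$ and a p.c.\ path $Q$ of order $q$ partitioning $V(P)$, with the attachment property $(iv)$; taking $q$ minimal and forming $P'=(i_1,\dots,i_p,i'_1,\dots,i'_q)$, one gets $|V(C)\cap N^c(i'_q;P')|\ge 2d-l+2$. The shifted set $R'$ of predecessors of colour-compatible attachment points has the key property that any colour-compatible edge inside $R'$ would re-close a spanning p.c.\ cycle on $V(C)\cup\{i'_q\}$ and contradict the minimality of $q$; hence an auxiliary directed graph on $R'$ has complete base graph, some vertex $i_{j_0}$ has in-degree at least $(|R'|-1)/2\ge(2d-l)/4$, and counting $i_{j_0}$, its $d-1$ colour-distinct neighbours (all forced into $V(P)$ by maximality of the re-routed path ending at $i_{j_0}$), and the $(2d-l)/4$ blocked vertices gives $l\ge(6d-l)/4$, i.e.\ $l\ge 6d/5$, a contradiction. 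If you wish to salvage your write-up, this minimal-$q$ cycle--path decomposition together with the in-degree argument is the concrete mechanism you would need in place of the unexecuted double count.
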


We set up notation and tools in the next section.
We prove Theorem~\ref{thm:2d+1} in Section~\ref{sec:2d+1}.
In Section~\ref{sec:k<d}, we prove Theorem~\ref{thm:k<d} and Proposition~\ref{prp:upper}.
Theorem~\ref{thm:path} is proved in Section~\ref{sec:path}.
Finally, we consider a variant of colour degree in Section~\ref{sec:deg} and give a counterexample to a conjecture in~\cite{Manoussakis10}.


\section{Preliminaries} \label{sec:prelimiary}
For $a,b \in \mathbb{N}$, let $[a,b]$ and $[b]$ denote the sets $\{ i \in \mathbb{N} : a \le i \le b\}$ and $\{ i \in \mathbb{N} : 1 \le i \le b\}$ respectively.

Given a graph~$G$, $V(G)$ and $E(G)$ denote the sets of vertices and edges of~$G$ respectively.
Denote the order of $G$ by $|G|$. 
Given a vertex subset $U \subseteq V(G)$, $G[U]$ is the (edge-coloured) subgraph of $G$ induced by~$U$.
Given an edge-colouring $c$ of~$G$, a \textit{colour neighbourhood}~$N^c_G(v)$ of a vertex~$v$ is a maximal subset of the neighbourhood of~$v$ such that $c(v,w_1) \ne c(v,w_2)$ for all distinct $w_1, w_2 \in N^c_G(v)$.
Thus, $|N^c_G(v)|=d^c_G(v)$.
It should be noted that there is a choice on $N^c_G(v)$, which we will specify later.
If the edge-coloured graph $G$ is clear from the context, then we omit the subscript.

For convenience, let the vertices of~$G$ be labelled from $1$ to~$|G|$.
A path~$P$ of length~$l-1$ is considered to be an $l$-tuple, $(i_1,i_2,\dots,i_{l})$, where $i_1,\dots, i_l$ are distinct.
Note that $P$ is directed, so we treat $(1,2, \dots, l)$ and $(l, l-1, \dots, 1)$ differently.
Similarly, a cycle of length~$l$ is considered to be an $(l+1)$-tuple, $(i_1,i_2,\dots,i_{l+1})$ with $i_1 = i_{l+1}$, where again $i_1,\dots, i_l$ are distinct. 
Given a p.c. path $P = (i_1,i_2,\dots,i_{l})$ and $1 \le j \le l $, define $N^c(i_j;P)$ to be a colour neighbourhood of $i_j$ chosen such that both $i_{j-1}$ and $i_{j+1}$ (if they exist) belong to~$N^c(i_j;P)$.
Again, there is still a choice on $N^c(i_j;P)$, which we will specify later.
In other words, given a p.c. path $P = (i_1,i_2,\dots,i_{l})$, the neighbours of $i_j$ in $P$ are always in $N^c(i_j;P)$ for $1 \le j \le l$.

Given a p.c. path $P= (i_1,i_2,\dots,i_{l})$, $N^c(i_{1};P)$ and $N^c(i_{l};P)$, we say that $P$ has a \emph{crossing with respect to  $N^c(i_{1};P)$ and $N^c(i_{l};P)$} if there exist $1 \le a < b \le l $ such that $i_{a} \in N^c(i_{l};P)$ and $i_b \in N^c(i_1;P)$.
If $i_j \in N^c(i_{l};P)$ and $c(i_{j-1},i_j) \ne c(i_j,i_{l})$, then $P' = (i_1,i_2, \dots, i_j , i_{l} , i_{l-1},\dots, i_{j+1})$ is also a p.c. path.
It is called a \emph{rotation of $P$ with endpoint~$i_1$ and pivot point~$i_j$}.
A \emph{reflection} of $P$ is simply the p.c. path $(i_{l}, i_{l-1}, \dots,i_1)$.
The set of p.c. paths that can be obtained by a sequence of rotations and reflections of $P$ is denoted by~$\mathcal{R}(P)$.
We say $P$ is \emph{extensible} if there exists a vertex $j \notin V(P)$ such that $(i_1, \dots, i_l,j)$ or $(j,i_1, \dots, i_l)$ is a p.c. path.
This implies that if $P$ is not extensible, then $N^c(i_1;P), N^c(i_{l};P) \subseteq V(P)$ for every choice of $N^c(i_1;P)$ and $N^c(i_{l};P)$.
If every $P' \in \mathcal{R}(P)$ is not extensible, then $P$ is said to be \emph{maximal}.
Hence, all maximal and non-extensible paths have lengths at least~$\delta^c(G)$.
We now study some basic properties of a p.c path $P$ below.

\begin{lma} \label{lma:simplecycle}
Let $c$ be an edge-colouring of a graph~$G$.
Let $P=(1,2, \dots,l)$ be a properly coloured path.
Suppose that there does not exist a properly coloured cycle spanning $G[V(P)]$.
Let $a \in N^c(1;P) \setminus \{2\}$ and $b \in N^c(l;P) \setminus \{l-1\}$ be such that $b < a$, $c(1,a) \ne c(a,a+1)$ if $a <l$, and $c(l,b) \ne c(b,b-1)$ if $b >1$.
Then, $C = (1, 2, \dots, b, l, l-1, \dots, a,1)$ is a properly coloured cycle, see Figure~\ref{fig:C0}.
\begin{figure}[tbp]
\begin{center}
\includegraphics[scale=0.6]{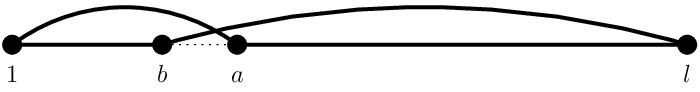}
\end{center}
\caption{Cycle $(1, 2, \dots, b, l, l-1, \dots, a,1)$}
\label{fig:C0}
\end{figure}
\end{lma}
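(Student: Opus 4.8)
The plan is to verify directly that $C$ is a cycle through distinct vertices and that it is properly coloured, handling the two endpoints of each ``jump'' with care; the colour conditions do the routine work, and the no-spanning-cycle hypothesis is reserved for two degenerate configurations.

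First I would confirm that $C$ is a genuine cycle. Its vertex set is $\{1,\dots,b\}\cup\{a,\dots,l\}$, and since $b<a$ these two intervals are disjoint, so $C$ meets each of its vertices exactly once. Its edges are the forward path edges inside $\{1,\dots,b\}$, the backward path edges inside $\{a,\dots,l\}$, the jump edge $\{b,l\}$ and the closing edge $\{a,1\}$; the path edges exist because $P$ is a path, while $\{b,l\}$ and $\{a,1\}$ exist because $b\in N^c(l;P)$ and $a\in N^c(1;P)$. The only way $C$ could collapse (to the length-two walk $(1,l,1)$) is if $b=1$ and $a=l$ simultaneously, which I postpone to the final step.

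For the colouring, I would first treat the generic situation $1<b$ and $a<l$. At any vertex strictly interior to one of the two path segments the two incident edges of $C$ are consecutive edges of $P$, hence differently coloured. At the four distinguished vertices: at $1$ the colours $c(1,2)$ and $c(1,a)$ differ because $2,a$ are distinct members of $N^c(1;P)$ (recall $a\neq 2$); at $l$ the colours $c(l,l-1)$ and $c(l,b)$ differ because $l-1,b$ are distinct members of $N^c(l;P)$ (recall $b\neq l-1$); at $a$ the colours $c(a,a+1)$ and $c(a,1)$ differ by the hypothesis $c(1,a)\neq c(a,a+1)$; and at $b$ the colours $c(b,b-1)$ and $c(b,l)$ differ by the hypothesis $c(l,b)\neq c(b,b-1)$. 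Notably, this generic case does not use the no-spanning-cycle hypothesis at all.

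I expect the boundary cases $a=l$ and $b=1$ to be the real obstacle, since there two of the distinguished vertices merge and the clean colour-neighbourhood argument fails. If $a=l$, the two edges of $C$ at $l$ are $\{b,l\}$ and $\{1,l\}$, so I would need $c(b,l)\neq c(1,l)$, and this is \emph{not} forced by $N^c(l;P)$, as $1$ need not lie in it. This is exactly where the hypothesis enters: were $c(b,l)=c(1,l)$, then closing the whole path into $(1,2,\dots,l,1)$ would be properly coloured, because at $1$ one has $c(1,2)\neq c(1,l)$ as $l=a\in N^c(1;P)\setminus\{2\}$, and at $l$ one has $c(l,l-1)\neq c(1,l)=c(b,l)$ as $l-1,b\in N^c(l;P)$ are distinct; this spanning p.c. cycle would contradict the assumption, forcing $c(b,l)\neq c(1,l)$. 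The case $b=1$ follows by the same argument applied to the reflection of $P$, and the doubly degenerate case $b=1,\,a=l$ is excluded outright, since it directly produces the spanning p.c. cycle $(1,2,\dots,l,1)$. Combining the generic check with this boundary analysis yields that $C$ is a p.c. cycle.
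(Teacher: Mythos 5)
Your proof is correct and is in substance the same as the paper's: both verify the generic case $1<b<a<l$ directly from the colour-neighbourhood conditions, and both reserve the no-spanning-cycle hypothesis for the boundary cases $b=1$ and $a=l$ (reduced to one another by reflection), where the missing colour clash at the merged endpoint is forced because otherwise $(1,2,\dots,l,1)$ would be a properly coloured spanning cycle. The only cosmetic difference is that the paper assumes $b=1$ and derives the equality $c(l,1)=c(1,2)$, which simultaneously rules out $a=l$, whereas you treat $a=l$ directly by contradiction and dispose of the doubly degenerate configuration $b=1,\,a=l$ separately --- mirror images of the same argument.
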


\begin{proof}
Since $a \in N^c(1;P)$ and $b \in N^c(l;P)$, $c(1,2) \ne c(1,a)$ and $c(l,l-1) \ne c(l,b)$.
If $1 < b < a < l$, then $C$ is a p.c. cycle.
So we may assume that $b = 1$.
Moreover, if $c(l,1) \ne c(1,2)$, then $(1,2,\dots, l,1)$ is a p.c. cycle contradicting the assumption on~$G[V(P)]$.
Thus, we may assume that $c(l,1) = c(1,2)$.
Since $a \in N^c(1;P) \setminus \{2\}$, $c(1,2) \ne c(1,a)$ and so $a < l$.
Note that $c(l,l-1) \ne c(1,l) = c(1,2) \ne c(1,a) \ne c(a,a+1)$ and so $C = (1, l, l-1, \dots, a,1)$ is a p.c. cycle as required.
\end{proof}

\begin{figure}[tbp]
\begin{center}
\includegraphics[scale=0.6]{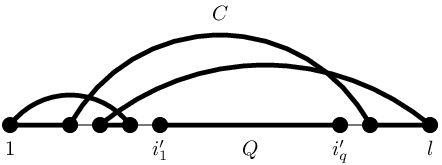}
\end{center}
\caption{A partition of $G[V(P)]$ into $C$ and $Q$ satisfying conditions $(i) - (iv)$ of Lemma~\ref{lma:keyproperties} (or Lemma~\ref{lma:crossing}), provided the graph is p.c..}
\label{fig:example}
\end{figure}

Given a p.c. path $P = (1,2, \dots, l)$, $N^c(1;P)$ and $N^c(l;P)$, we define vertices
\begin{align*}
r & = r(P) =  \min \{ b \in N^c(l;P)\},\\
s & = s(P) = \max \{ s' \in N^c(l;P) : c(l,b) = c(b,b+1) \text{ for every $b \in N^c(l;P) \cap [s']$}\},\\
u & = u(P) =  \max \{ u' \in N^c(1;P) \setminus \{l\} : c(1,a) = c(a,a+1) \text{ for every $a \in N^c(1) \cap [s+1,u']$} \},\\
w & = w(P) = \min \{ a \in N^c(1;P) \cap [u+1, l]\}.
\end{align*}
Note that the vertices $s,u, w$ may not exist for arbitrary $P$, $N^c(1;P)$ and $N^c(l;P)$.
If $s$ exists, then we further define the vertex set $S = S(P)$ to be $N^c(l;P) \cap [s]$.
Hence, $r,s,u,w$ and $S$ are functions of $P$ and its colour neighbourhoods.
In the following lemma, we show that $r, s, u$ and $w$ exist for some special p.c. paths $P$ that have crossings (with respect to $N^c(1;P)$ and $N^c(l;P)$).

\begin{lma} \label{lma:keyproperties}
Let $c$ be an edge-colouring of a graph~$G$ with $\delta^c(G) \ge d \ge 2$.
Let $P=(1,2, \dots,l)$ be a properly coloured path in $G$ that is not extensible.
Suppose there exist $N^c(1;P)$ and $N^c(l;P)$ such that $P$ has a crossing with respect to $N^c(1;P)$ and $N^c(l;P)$.
Furthermore, suppose that there does not exist a properly coloured subgraph of $G[V(P)]$ consisting of a cycle $C$ and a path $Q$ such that
\begin{enumerate}
	\item[\rm (i)] $C = (i_1,i_2 \dots, i_p,i_1)$ with $p \ge d+1$;
	\item[\rm (ii)] $Q = (i'_1,i'_2 \dots, i'_q)$, where $Q$ may be empty or consisting of a single vertex;;
	\item[\rm (iii)] $V(C) \cap V(Q) = \emptyset$ and $V(P) = V(C) \cup V(Q)$;
	\item[\rm (iv)] if $|Q| \ge 2$, then there exists $j \in [p]$ with $(i'_1,i_j) \in E(G)$ and $c(i'_1,i'_2) \ne c(i'_1,i_j)$.
\end{enumerate}
(See Figure~\ref{fig:example} for an example a partition of $G[V(P)]$ into $C$ and $Q$.)
Then $r = r(P)$ and $s = s(P)$ exist with $1 \le r \le s \le l-2$.
Moreover, the following statements hold:
\begin{description}
    \item[\rm (a)] $c(b,l) = c(b,b+1) \ne c(l, l-1)$ for all $b \in S(P)$;
	\item[\rm (b)] if $b = \min \{b' \in N^c(l;P) \setminus S\}$, then $c(b,l) \ne c(b,b+1)$;
	\item[\rm (c)] $c(1,a) = c(a,a+1) \ne c(a,a-1)$ for all $a \in ([r+1,s] \cap N^c(1;P)) \setminus \{2\}$.
\end{description}
Furthermore, if $s \ge 2$, then $u = u(P)$ and $w=w(P)$ exist with $1 \le r \le s< u < w \le l$.
In addition, the following statements hold:
\begin{description}
	\item[\rm (d)] $c(1,a) = c(a,a+1) \ne c(a,a-1)$ for all $a \in [s+1,u] \cap N^c(1;P)$;
	\item[\rm (e)] if $a \in N^c(1;P)$ and $a< w $, then $a\le u$;
	\item[\rm (f)] $c(1,w) \ne c(w,w+1)$ if $w< l$ and $c(1,w) = c(l,l-1)$ if $w =  l$.
\end{description}
\end{lma}

\begin{proof}
Write $N^c(1) = N^c(1;P)$ and $N^c(l) = N^c(l;P)$.
Since $P$ is not extensible, $N^c(1) \cup N^c(l) \subseteq V(P)$ and $l \ge d+1$.
If $c(l,r) \ne c(r,r+1)$, then $C=(r, r+1, \dots,l,r)$ is a p.c. cycle containing $N^c(l) \cup \{l\}$ of length at least $d+1$.
In addition, $Q = (r-1,r-2,\dots, 1)$ is a p.c. path, which contradicts the assumption on~$G[V(P)]$.
Hence, $c(l,r) = c(r,r+1)$ and so $s$ exists with $r \le s$.

First, we prove~(c).
Suppose that (c) is false, so there exists $a \in (N^c(1) \cap [r+1,s] ) \setminus \{2\}$ such that $c(1,a) \ne c(a,a+1)$.
Let $b \in S(P) \subseteq N^c(l)$ be maximal such that $ b<a$.
Note that $a >2$ and $1 \le r \le b < a \le s \le l-1$.
By the definition of $s$, we have $c(b,l) = c(b,b+1)$.
If $b >1$, then $c(b,l) = c(b,b+1) \ne c(b,b-1)$ as $P$ is p.c..
By Lemma~\ref{lma:simplecycle}, $C = (1,2,\dots,b,l,l-1,\dots,a,1)$ is a p.c. cycle of length at least $d+1$ as $C$ contains $N^c(l) \cup \{l\}$.
Note that $Q = (b+1, b+2, \dots, a-1)$ is a p.c. path contradicting the assumption on~$G[V(P)]$.
Thus (c) holds.

Next, we are going to show that $s \le l-2$.
Let $a' \in N^c(1)$ be maximal.
Recall that $\delta^c(G) \ge 2$, so $a' >2$.
Note that $c(1,a') = c(a', a'-1)$ or else the cycle $C=(1, 2, \dots,a',1)$ and path $Q = (a'+1,\dots, l)$ are both p.c. contradicting the assumption on~$G[V(P)]$.
Recall that $P$ has a crossing, so $r < a'$.
By (c) we have $s < a'$.
If $ s= l-1$, then $a' = l$.
This implies that $c(1,l) = c(l,l-1)$ and so $1 \notin N^c(l)$.
Let $b \in N^c(l)$ be maximal with $b < l-1 = s$.
Clearly, $b \ge 2$.
By the definition of~$s$, $c(l,b) = c(b, b+1) \ne c(b, b-1)$.
Also, $c(1,l) = c(l,l-1) \ne c(l,b)$ as $b \in N^c(l) \setminus \{l-1\}$.
Therefore, the cycle $C = (1, \dots, b, l, 1)$ is p.c. with 
\begin{align*}
|C| \ge |\{1,l\} \cup ( N^c(l) \setminus \{l-1\}) | \ge |\{1,l\}|+ | N^c(l) \setminus \{l-1\} |\ge d+1.
\end{align*}
This is a contradiction by setting $Q = (b+1, \dots, l-1)$. 
Therefore, $r \le s \le l-2$ as required.
Hence, (a) and (b) easily follow from the definitions of $s$ and $S(P)$.

Now assume that $s \ge 2$.
Recall that if $a' = \max \{a \in N^c(1)\}$, then $c(1,a') = c(a', a'-1)$ and $s <a'$.
Let $a'' \in N^c(1)$ be minimal such that $a''>s \ge 2$. 
Clearly, $a''$ exists.
Suppose that $a'' = l$ or $c(1,a'') \ne  c(a'',a''+1)$ if $a'' < l$.
By (a) and Lemma~\ref{lma:simplecycle} taking $a = a''$ and $b=s$, the cycle $C = (1,2,\dots,s,l,l-1,\dots,a'',1)$ and the path $Q = (s+1, s+2, \dots, a''-1)$ are p.c..
Moreover, $C$ contains $ N^c(1) \cup \{1\}$, so $|C| \ge d+1$, a contradiction.
Thus, we have $c(1,a'') = c(a'',a''+1)  \ne c(a'',a''-1)$ and $a'' < l$.
Therefore $u$ exists and so (d) is true by the definition of~$u$.
Furthermore, $N^c(1) \not\subseteq [u]$, where we recall that $c(1,a') = c(a', a'-1)$ with $a' = \max \{a \in N^c(1)\}$.
Thus, $w$ exists and so (e) and (f) follow.
This completes the proof of the lemma.
\end{proof}

\section{Maximal p.c. paths with crossings} \label{sec:2d+1}

Let $G$ be an edge-coloured graph with $\delta^c(G) \ge 2$.
In this section, we show that for every maximal p.c. path $P$ that has a crossing, there exists a p.c. cycle of length at least~$\delta^c(G)+1$ unless $|P| \ge 2\delta^c(G)+1$.
We consider the cases when $\delta^c(G)=2$ and $\delta^c(G) \ge 3$ separately.

\begin{lma} \label{lma:crossing2}
Let $c$ be an edge-colouring of a graph~$G$ such that $\delta^c(G) \ge 2$.
Let $P = (1, 2, \dots, l)$ be a maximal properly coloured path.
Suppose $P$ has a crossing with respect to some $N^c(1;P)$ and $N^c(l;P)$.
Then, there exists a properly coloured cycle $C$ in $G[V(P)]$.
\end{lma}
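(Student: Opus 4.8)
The plan is to argue by contradiction: assume $G[V(P)]$ contains no p.c. cycle whatsoever, and label $P=(1,2,\dots,l)$ so the endpoints are $x=1$ and $y=l$. Since $P$ is maximal it is non-extensible, so $N^c(1),N^c(l)\subseteq V(P)$; because $\delta^c(G)\ge 2$ we have $|N^c(1)|,|N^c(l)|\ge 2$ and $l\ge 3$. The crossing hands us $\alpha\in N^c(l)$ and $\beta\in N^c(1)$ with $\alpha<\beta$. As $G[V(P)]$ has no p.c. cycle it has in particular no spanning one, so Lemma~\ref{lma:simplecycle} is applicable, and the whole task reduces to producing a crossing pair $b\in N^c(l)\setminus\{l-1\}$, $a\in N^c(1)\setminus\{2\}$ with $b<a$ meeting the two junction colour conditions $c(1,a)\ne c(a,a+1)$ (when $a<l$) and $c(l,b)\ne c(b,b-1)$ (when $b>1$): then Lemma~\ref{lma:simplecycle} outputs a p.c. cycle, contradicting our assumption.

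First I would establish two ``chain'' reductions that come for free from the no-cycle hypothesis. (I) Every $b\in N^c(l)$ with $b\le l-2$ satisfies $c(l,b)=c(b,b+1)$: otherwise $(b,b+1,\dots,l,b)$ is a p.c. cycle, since its only non-path junctions are at $b$ (where $c(b,b+1)\ne c(b,l)$ by the supposed inequality) and at $l$ (where $c(l,b)\ne c(l,l-1)$ as $b\in N^c(l)$). (II) Symmetrically, every $a\in N^c(1)$ with $a\ge 3$ satisfies $c(1,a)=c(a,a-1)$, else $(1,2,\dots,a,1)$ is a p.c. cycle. These two reductions dispose of every colour-neighbour that ``breaks the chain''.

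The heart of the argument is to feed a single crossing pair into these chains. Take $\alpha=\min N^c(l)$ and $\beta=\max N^c(1)$; since $|N^c(l)|,|N^c(1)|\ge 2$ we get $\alpha\le l-2$ and $\beta\ge 3$, and the crossing forces $\alpha<\beta$. Treating first the generic case $2\le\alpha$ and $\beta\le l-1$, consider $C=(1,2,\dots,\alpha,l,l-1,\dots,\beta,1)$, a genuine cycle because $\alpha<\beta$. Its four junctions are all properly coloured: at $l$ and at $1$ this is automatic from $\alpha\in N^c(l)$ and $\beta\in N^c(1)$; at $\alpha$ I use (I) to get $c(\alpha,l)=c(\alpha,\alpha+1)\ne c(\alpha,\alpha-1)$, and at $\beta$ I use (II) to get $c(\beta,1)=c(1,\beta)=c(\beta,\beta-1)\ne c(\beta,\beta+1)$, the last inequalities holding since $P$ is properly coloured. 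Thus $C$ is a p.c. cycle, a contradiction. (This is exactly the pair $b=\alpha$, $a=\beta$ of Lemma~\ref{lma:simplecycle}, so one may quote that lemma rather than recheck the junctions by hand.)

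The step I expect to be the real obstacle is the boundary behaviour at the endpoints: the crossing pair may be forced to use $\alpha=1$ or $\beta=l$, and then one of the two junctions degenerates and the relevant chain identity is no longer of the stated form. When $\alpha=1$ (so $1\in N^c(l)$) I would first test the spanning cycle $(1,2,\dots,l,1)$: either it is p.c. and we are done, or $c(1,l)=c(1,2)$, in which case the $b=1$ branch of Lemma~\ref{lma:simplecycle} applies with $a=\beta$, its single colour condition being furnished by (II). The case $\beta=l$ is dual. What remains are a few configurations such as $N^c(1)=\{2,l\}$ with $1\notin N^c(l)$, where the only surviving crossing forces $\beta=l$; these I would finish by hand, exploiting the freedom in the choice of colour-neighbour representatives — since each endpoint sees at least two colours, one can select $\alpha\in N^c(l)$ whose colour differs from $c(l,1)$ and close the cycle $(1,2,\dots,\alpha,l,1)$. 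Keeping these degenerate crossings under control, rather than the clean chain computation, is where essentially all the care is needed.
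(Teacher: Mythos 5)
Your proposal is correct, but it is a genuinely different proof from the paper's. The paper dispatches this lemma in a few lines via Yeo's structural theorem (Theorem~\ref{thm:Yeo}): since $P$ is maximal, $N^c(x;P)\cup N^c(y;P)\subset V(P)$ and every interior vertex keeps its two path-neighbours in its colour neighbourhood, so $H=G[V(P)]$ satisfies $\delta^c(H)\ge 2$; the crossing chords make $H$ $2$-connected, so the separating vertex $z$ that Theorem~\ref{thm:Yeo} would force (were $H$ free of p.c.\ cycles) cannot exist, since $H-z$ would be a single component joined to $z$ in one colour, giving $d^c_H(z)=1$. You instead run a self-contained rotation-style argument: the two ``chain'' identities (I) and (II) are exactly Lemma~\ref{lma:keyproperties}-type facts, and feeding the extremal pair $\alpha=\min N^c(l)$, $\beta=\max N^c(1)$ (with $\alpha<\beta$ forced by the crossing) into Lemma~\ref{lma:simplecycle} closes the cycle; your boundary analysis is sound, including the correct deductions that $c(1,l)=c(1,2)$ forces $l\notin N^c(1)$ in the $\alpha=1$ case, and dually $c(l,1)=c(l,l-1)$ forces $1\notin N^c(l)$ when $\beta=l$. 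One point in your favour worth noting: handling $\beta=l$ by hand rather than through Lemma~\ref{lma:simplecycle} is actually necessary, since that lemma's proof only treats the degenerate endpoint $b=1$, not $a=l$, and your explicit cycle $(1,2,\dots,\alpha,l,1)$ with $c(l,\alpha)\ne c(l,l-1)=c(l,1)$ fills that in correctly. What each approach buys: the paper's proof is shorter and works because the $\delta^c=2$ case needs only existence of some p.c.\ cycle, for which the off-the-shelf Grossman--H\"aggkvist--Yeo theorem suffices; yours is elementary, constructive (it exhibits the cycle and where it sits on $P$), and is essentially a baby version of the machinery (Lemmas~\ref{lma:simplecycle} and~\ref{lma:keyproperties}) that the paper must develop anyway for the $\delta^c\ge 3$ case, so it unifies the two cases at the cost of the case analysis you carried out.
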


\begin{lma} \label{lma:crossing}
Let $c$ be an edge-colouring of a graph~$G$ such that $\delta^c(G) = d \ge 3$.
Let $P=(1, 2, \dots, l)$ be a maximal properly coloured path.
Suppose $P$ has a crossing with respect to some $N^c(1;P)$ and $N^c(l;P)$.
Then, there exists a properly coloured subgraph of $G[V(P)]$ consisting of a cycle $C$ and a path $Q$ such that
\begin{enumerate}
	\item[\rm (i)] $C = (i_1,i_2 \dots, i_p,i_1)$ with $|C| =p  \ge d$;
	\item[\rm (ii)] $Q = (i'_1,i'_2 \dots, i'_q)$, where $Q$ may be empty or consisting of a single vertex;
	\item[\rm (iii)] $V(C) \cap V(Q) = \emptyset$ and $V(P) = V(C) \cup V(Q)$;
	\item[\rm (iv)] if $|Q| \ge 2$, then there exists $j \in [p]$ with $(i'_1,i_j) \in E(G)$ and $c(i'_1,i'_2) \ne c(i'_1,i_j)$.
\end{enumerate}
(See Figure~\ref{fig:example} for an example a partition of $G[V(P)]$ into $C$ and $Q$.)
Moreover, if $|P| \le 2d$, then $|C| = p \ge d+1$.
\end{lma}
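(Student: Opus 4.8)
The plan is to argue by contradiction, using Lemma~\ref{lma:keyproperties} as the engine. If $G[V(P)]$ already contains a p.c.\ cycle--path pair satisfying $(i)$--$(iv)$ with $p \ge d+1$, then both assertions of the lemma hold at once, so I assume no such pair exists. Since $P$ is maximal it is non-extensible, and by hypothesis it has a crossing; thus every hypothesis of Lemma~\ref{lma:keyproperties} is met, and I obtain the indices $r \le s$ and, provided $s \ge 2$, the further indices $u < w$, together with the colour identities $(a)$--$(f)$. I also record that $N^c(1), N^c(l) \subset V(P)$, that each of $N^c(1)$ and $N^c(l)$ contains at least $d$ vertices, and that $l \ge d+1$. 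Before applying Lemma~\ref{lma:simplecycle} I would dispose of the case in which $G[V(P)]$ has a p.c.\ cycle spanning it: such a cycle has length $l \ge d+1$, from which the required conclusion is obtained directly, and in its absence Lemma~\ref{lma:simplecycle} becomes applicable.

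With the structure from Lemma~\ref{lma:keyproperties} available, I would construct the pair explicitly. Taking $\beta = s \in N^c(l)$ and $\alpha = w \in N^c(1)$, the hypothesis $c(l,s) \ne c(s,s-1)$ demanded by Lemma~\ref{lma:simplecycle} follows by combining property~$(a)$, which gives $c(s,l) = c(s,s+1)$, with the fact that $P$ is properly coloured, so that $c(s,s+1) \ne c(s,s-1)$; the companion hypothesis $c(1,w) \ne c(w,w+1)$ is exactly property~$(f)$. As $s < w$, Lemma~\ref{lma:simplecycle} yields the p.c.\ cycle $C = (1,2,\dots,s,l,l-1,\dots,w,1)$ with complementary p.c.\ path $Q = (w-1,w-2,\dots,s+1)$; these are disjoint and cover $V(P)$, giving $(ii)$ and $(iii)$. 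Condition $(iv)$ is then automatic: the pivot $w$ lies on $C$, it is the $P$-neighbour of the first vertex $w-1$ of $Q$, and $c(w-1,w) \ne c(w-1,w-2)$ since $P$ is properly coloured. When it is longer I would instead take the local cycle $C' = (b^*,b^*+1,\dots,l,b^*)$ at the first non-matching neighbour $b^*$ of $l$ furnished by property~$(b)$, with complementary path $(1,\dots,b^*-1)$ attached at $w$, and keep whichever of the two cycles has more vertices.

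The remaining and hardest task is the length dichotomy. For part $(i)$ I would read off the positions of the $d$ vertices of $N^c(l)$ (respectively $N^c(1)$) lying on the chosen cycle and check directly that it has at least $d$ vertices. For the refinement I would suppose, against the sought pair, that both candidate cycles have at most $d$ vertices; from $|C'| = l - b^* + 1 \le d$ and $|C| = s + (l-w) + 1 \le d$ I would deduce that the first non-matching neighbour $b^*$ of $l$ lies close to $l$ and that $w$ is correspondingly large. The matching-colour neighbours guaranteed by $(a)$, $(c)$ and $(d)$ then force $N^c(l)$ and $N^c(1)$ to occupy essentially disjoint blocks near the two ends of $P$, and double-counting these $2d$ vertices together with the unavoidable gap separating the blocks yields $|P| = l \ge 2d+1$. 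I expect this counting---reconciling the two constructions and tracking precisely which positions the matching neighbours forbid---to be the main obstacle; the cycle construction itself is routine once Lemmas~\ref{lma:simplecycle} and~\ref{lma:keyproperties} are in hand.

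Finally I would treat the boundary cases lying outside the $s \ge 2$ regime of Lemma~\ref{lma:keyproperties}. If $s = 1$ then $r = 1$, so $1 \in N^c(l)$ with $c(l,1) = c(1,2)$, and I would build the pair directly from the crossing, choosing $a \in N^c(l)$ and $b \in N^c(1)$ with $a < b$, rather than from $u$ and $w$. I would likewise handle separately the degeneracy $w = l$, where $(1,2,\dots,l,1)$ is already a p.c.\ cycle of length $l \ge d+1$, and the cases in which $Q$ has fewer than two vertices, where $(iv)$ must be read off directly. Each of these needs only a short direct argument.
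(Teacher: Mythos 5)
Your construction of the pair $(C,Q)$ from Lemmas~\ref{lma:simplecycle} and~\ref{lma:keyproperties} matches the paper's, but the proof breaks at exactly the point you flag as "routine": showing $|C|\ge d$. Your plan to "read off the positions of the $d$ vertices of $N^c(l)$ (respectively $N^c(1)$) lying on the chosen cycle" fails because those colour neighbourhoods need not lie on $C=(1,2,\dots,s,l,l-1,\dots,w,1)$ at all: by Lemma~\ref{lma:keyproperties}~$(d)$ the elements of $N^c(1;P)$ in $[s+1,u]$ lie strictly between $s$ and $w$ and are therefore off the cycle, and nothing in your argument bounds $|[s+1,u]\cap N^c(1;P)|$ --- this is precisely the negative term in the paper's identity~\eqref{eqn:|C|}. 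The missing idea is a P\'osa-type exchange over the rotation--reflection class: the paper chooses $P$ so that $|S|=|[r,s]\cap N^c(l;P)|$ is \emph{maximal} over all of $\mathcal{R}(P)$, then rotates at $r$ to form $P'=(r+1,\dots,l,r,r-1,\dots,1)$, observes $S(P')\supset N^c(1;P)\cap[r+1,u]\setminus\{2\}$, and uses maximality to get $|S|\ge|N^c(1;P)\cap[r+1,u]|-\delta_{1,r}$; only by adding this to~\eqref{eqn:|C|} does $|C|\ge d$ follow (Claim~\ref{clm:|C|}). Without this step your cycle can genuinely be shorter than $d$, and your fallback cycle $C'=(b^*,\dots,l,b^*)$ has equally uncontrolled length, so keeping "whichever is longer" does not rescue the bound.

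The "moreover" dichotomy is an even larger gap. Your hope that the matching-colour neighbours force $N^c(1)$ and $N^c(l)$ into "essentially disjoint blocks" whose double count gives $l\ge 2d+1$ is false in the tight cases: when the second assertion fails, equality in the paper's counting pins $A$ down to $[2,r]\cup[t,u]\cup[w,l]$ as in~\eqref{eqn:distributionofA}, so the two neighbourhoods interleave along $P$ rather than separating, and no static count yields $2d+1$. The paper instead needs a long dynamic argument: three further structural claims ($|S|\ge 2$, then $t\ge r+3$ after re-choosing $P$, then $|S|\ge 3$), an iterated-rotation scheme via the cyclic shift $\phi$ that propagates colour identities along the entire path, and --- twice --- an \emph{edge-minimal} counterexample $G$ from which a monochromatic edge $(d,d+1)$ is deleted to contradict minimality, before the closing contradiction via the new cycle $(r+1,r+2,l,l-1,\dots,r+3,r+1)$ of length at least $d+1$. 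Your proposal contains no analogue of the $|S|$-maximization, the iterated rotations, or the edge-minimality device, so both length assertions remain unproved. (Minor: the boundary case $s=1$ is handled in the paper simply by reflecting $P$, using~\eqref{eqn:lA1B}, rather than by a separate construction.)
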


In Lemma~\ref{lma:crossing2}, i.e. when $\delta^c(G) =2$, we only show the existence of a p.c. cycle $C$ in $G[V(P)]$.
In Lemma~\ref{lma:crossing}, i.e. when $\delta^c(G) \ge 3$, we further show that there is a spanning p.c. path~$Q$ in $G[V(P) \setminus V(C)]$, if $V(P) \setminus V(C) \ne \emptyset$, where $C$ is a p.c. cycle of length at least $\delta^c(G)$.
Next, we show that Lemma~\ref{lma:crossing2} and Lemma~\ref{lma:crossing} imply Theorem~\ref{thm:2d+1}.

\begin{proof}[Proof of Theorem~\ref{thm:2d+1}]
Let $P$ be a maximal p.c. path in $G$.
Without loss of generality, $P=(1,2, \dots, l)$.
Fix $N^c(1;P)$ and $N^c(l;P)$.
If $P$ has a crossing, then we are done by Lemma~\ref{lma:crossing2} and Lemma~\ref{lma:crossing}.
If $P$ does not have a crossing, then $|(N^c(1;P) \cup \{1\})\cap (N^c(l;P)\cup \{l\})| \le 1$ and so $|P| \ge |(N^c(1;P) \cup \{1\})\cup (N^c(l;P)\cup \{l\})| \ge 2d+1$.
\end{proof}

First we prove Lemma~\ref{lma:crossing2}, that is the case when $\delta^c(G) \ge 2$.
\begin{proof}[Proof of Lemma~\ref{lma:crossing2}]
Suppose the lemma is false.
Let $G$ be a graph with an edge-colouring~$c$ containing a maximal p.c. path $P = (1,2, \dots, l)$ that contradicts Lemma~\ref{lma:crossing2}, so $\delta^c(G) \ge 2$.
Fix $N^c(i;P)$ for each $i \in [l]$ such that $P$ has a crossing.
Note that $|N^c(i;P)\cap V(P)| \ge 2$ for all $i \in[l]$.
The induced subgraph $H = G[V(P)]$ does not contain any p.c. cycle and $\delta^c(H) \ge 2$.
By Theorem~\ref{thm:Yeo}, there exists a vertex~$z$ in $V(H)$ such that no connected component of~$H-z$ is joined to $z$ with edges of more than one colour.
However, $H$ is 2-connected as $P$ has a crossing.
This contradicts the existence of such~$z$ as $\delta^c(H) \ge 2$.
\end{proof}

Here, we sketch the proof of Lemma~\ref{lma:crossing}.
Let $G$ be a graph with an edge-colouring~$c$.
Let $P = (1,2, \dots, l)$ be a maximal p.c. path in~$G$.
Let $C$ be a p.c. cycle such that 
\begin{align}
	V(C) \subseteq [l] \text{ and } [l] \setminus V(C) \text{ is an interval} \label{eqn:C-keyproperty},
\end{align}
$[l] \setminus V(C) = [i'_1, i'_q]$ say.
Without loss of generality (by considering $(i'_q,i'_{q-1}, \dots,i'_1)$ instead if necessary), we may assume that $i'_1 \notin \{1, l\}$, so $N^c(i'_1;P) \cap V(C) \ne \emptyset$.
By setting $Q = (i'_1,i'_1+1, \dots  i'_q)$, $C$ and $Q$ satisfy properties (ii)--(iv) in Lemma~\ref{lma:crossing}.
Thus, to prove Lemma~\ref{lma:crossing}, it suffices to show that there exists a p.c. cycle $C$ satisfying~\eqref{eqn:C-keyproperty} with $|C| \ge d$.
Suppose the lemma is false.
Let $P$ be a maximal p.c. path in $G$ contradicting the lemma.
Apply Lemma~\ref{lma:keyproperties} and obtain vertices $r$, $s$, $u$ and $w$ with $1 \le r \le s  <u <w \le l$.
Then, $C_0 = (1,2,\dots,s,l,l-1,\dots,w,1)$ is a p.c. cycle by Lemma~\ref{lma:simplecycle} satisfying~\eqref{eqn:C-keyproperty}, see Figure~\ref{fig:C1}.
By further assuming that $|S(P)| \ge |S(P')|$ for all $P' \in \mathcal{R}(P)$, we then deduce that $|C_0| \ge d$, Claim~\ref{clm:|C|}.
Thus, Lemma~\ref{lma:crossing} holds with $ p \ge d$.
A detailed analysis of $N^c(i;P)$ for all $i \in [l]$ is needed in order to prove the `moreover' statement, that is if $|P| \le 2d$ then $|C| = p \ge d+1$.

\begin{proof}[Proof of Lemma~\ref{lma:crossing}]
Suppose the lemma is false.
Let $G$ be an edge-minimal graph with an edge-colouring~$c$ containing a maximal p.c. path $P = (1,2, \dots, l)$ that contradicts Lemma~\ref{lma:crossing}.
By the discussion above, in order to prove Lemma~\ref{lma:crossing}, it suffices to show that there exists a p.c. cycle $C$ satisfying~\eqref{eqn:C-keyproperty} with $|C| \ge d$.
Furthermore, if we can show that $|C| \ge d+1$, then the `moreover' statement of the lemma also holds.
Fix $N^c(1;P)$ and $N^c(l;P)$ such that $P$ has a crossing.
Let 
\begin{align*}
A  = \{a_i : 1 \le i \le d_1 \}= N^c(1;P) \text{ and } B = \{b_j : 1 \le j \le d_l\} = N^c(l;P),
\end{align*}
where both $(a_i)_{i = 1}^{d_1}$ and $(b_j)_{j=1}^{d_l}$ are increasing sequences.
By maximality of $P$ and choices of $N^c(i;P)$, we have
\begin{align*}
 d^c(1) = d_1,\  d^c(l) = d_l, \ a_1 = 2 \text{ and } b_{d_l} = l-1.
\end{align*}
If $l \in A$ and $1 \in B$, then $(1,2, \dots, l, 1) $ is a cycle of length $l \ge d+1$ as $A \cup \{1\} \subseteq V(P) = [l]$.
Hence, 
\begin{align}
l \notin A \text{ or }1 \notin B, \label{eqn:lA1B}
\end{align}
so
\begin{align}
l\ge d+2. \label{eqn:lA1B2}
\end{align}
If $l-1 \in A$ and $c(1,l-1) \ne c(l-1,l-2)$, then $(1,2, \dots, l-1,1)$ is a p.c. cycle of length $l-1 \ge d+1$.
Therefore, 
\begin{align}
\text{if $l-1 \in A$, then $c(1,l-1) = c(l-1,l-2)$.} \label{eqn:l-1A}
\end{align}
Apply Lemma~\ref{lma:keyproperties} and obtain $r = r(P)$ and $s = s(P)$. 
Note that $r,s \in B$ and $r = b_1$.
Recall that $S = S(P) = [r,s] \cap B$.
Also, notice that
\begin{align}
	c(1, a_{d_1}) = c(a_{d_1}, a_{d_1}-1) \label{eqn:c(1,a_p)}
\end{align}
or else $(1,2, \dots, a_{d_1},1)$ is a p.c. cycle satisfying~\eqref{eqn:C-keyproperty} with $|C| \ge d+1$ as $A \cup \{1\} \subseteq V(C)$.
We further assume that $|S| = |S(P)| \ge |S(P')|$ for all $P' \in \mathcal{R}(P)$, i.e. $|S|$ is maximal.

If $|S| \ge 2$, then $s \ge 2$.
If $|S| = 1$ and $s = 1$, then $l \notin A$ by~\eqref{eqn:lA1B}.
Let $P'= (l, l-1, \dots, 1)$ be the reflection of $P$.
Set $N^c(1;P') = N^c(1;P) = A$ and $N^c(l;P') = N^c(l;P)=B$.
Note that $P'$ has a crossing (with respect to $N^c(1;P')$ and $N^c(l;P')$) and $P' \in \mathcal{R}(P)$.
By Lemma~\ref{lma:keyproperties}, $S(P')$ is non-empty as $r(P') \in S(P')$.
Since $|S|$ is maximal and $|S| =1$, we deduce that $|S(P')| = 1$.
Moreover, $r(P') \ne l$ as $l \notin A = N^c(1;P')$.
Therefore, by replacing $P$ with $P'$ and relabelling the vertices, we may assume that $s \ge 2$.
In summary, without loss of generality, we may assume that $s \ge 2$.
Apply Lemma~\ref{lma:keyproperties} and obtain $u = u(P)$ and $w = w(P)$ with 
\begin{align}
2,r \le s < u < w \le l. \label{eqn:1}
\end{align}
In the next claim, we find a p.c. cycle $C_0$ satisfying~\eqref{eqn:C-keyproperty} with $|C_0| \ge d $.
Hence, this completes the proof of Lemma~\ref{lma:crossing} without the `moreover' statement.

\begin{figure}[tbp]
\begin{center}
\includegraphics[scale=0.6]{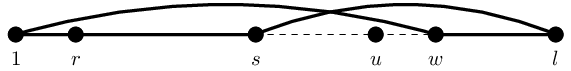}
\end{center}
\caption{$C_0 = (1,2,\dots,s,l,l-1,\dots,w,1)$} 
\label{fig:C1}
\end{figure}

\begin{clm} \label{clm:|C|}
The following statements are true:
\begin{description}
\item{\rm (a)} $C_0 = (1,2,\dots,s,l,l-1,\dots,w,1)$, see Figure~\ref{fig:C1}, is a p.c. cycle satisfying~\eqref{eqn:C-keyproperty}.
\item{\rm (b)} $|C_0| = d = |A|$, $l \le 2d$ and $S = [r,s]$.
Moreover, every $P'' \in \mathcal{R}(P)$ has a crossing independent of the choices of the colour neighbourhoods.
\item{\rm (c)} Let $t = t(P) =  u - |S| +1 \ge 3$. Then
\begin{align}
 A & = \left\{ \begin{array}{ll}
	[2,r] \cup [t,u] \cup [w,l]	& \text{if $r \ne 1$,}\\
	\{ 2 \} \cup [t,u] \cup [w,l-1]	& \text{if $r =1$,}
    \end{array} \right. 
    \nonumber
\end{align}
where all intervals are non-empty and pairwise disjoint.
\item{\rm (d)} $c(1,i) = c(i,i+1)$ for every $t \le i \le u$;
\item{\rm (e)} Given an integer $b$ with $r \le b \le  s$, the path $P^* = (b+1,b+2, \dots, l,b,b-1,\dots,1)$ is p.c. and a member of $\mathcal{R}(P)$. 
Moreover, if $ b <t$ and 
\begin{align*}
 N^c(1;P^*)  & = \left\{ \begin{array}{ll}
	A	& \text{if $b \ne 1$,}\\
	A \cup \{l\} \setminus \{2\} & \text{if $b =1$,}
    \end{array} \right.
\end{align*}
then $S(P^*) = [t,u]$.

\item{\rm (f)} If $P''=(i_1, \dots, i_l) \in \mathcal{R}(P)$ with $s(P'') \ne i_1$ and $|S(P'')| = |S|$ with respect to some $N^c(i_1;P'')$ and $N^c(i_l;P'')$, then the corresponding statements of {\rm(a)--(e)} hold (by the map $i_j \rightarrow j$ and recall that $r,s,t,u,v$ are functions of $P$).
\end{description}
\end{clm}

\begin{proof}[Proof of claim]
Note that $2 \le s < l-1$ and $w \ge 4$ by~\eqref{eqn:1}.
Hence (a) follows from Lemma~\ref{lma:simplecycle} and Lemma~\ref{lma:keyproperties}~(a) and~(f).
Recall that $r \in S $, so $c(l,l-1) \ne c(l,r) = c(r,r+1)$ by Lemma~\ref{lma:keyproperties}~(a).
In addition, if $r \ne 1$, then $c(r,l) \ne c(r,r-1)$ since $P$ is p.c..
Thus, the path 
\begin{align*}
P' = (r+1,r+2, \dots, l,r,r-1,\dots,1)
\end{align*}
(see Figure~\ref{fig:Pprime}), which is obtained by a rotation with endpoint~$1$ and pivot point~$r$ followed by a reflection, is p.c. and so a member of~$\mathcal{R}(P)$.
\begin{figure}[tbp]
\begin{center}
 \includegraphics[scale=0.6]{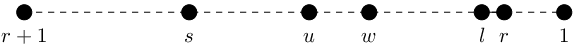}
\end{center}
\caption{$P' = (r+1,r+2, \dots, l,r,r-1,\dots,1)$} \label{fig:Pprime}
\end{figure}
Since $c(r+1,r) \ne c(r+1,r+2)$, we choose $N^c(r+1;P^{\prime})$ such that $r \in N^c(r+1;P^{\prime})$.
Set $N^c(1;P^{\prime}) = A$ if $r \ne 1$ and $N^c(1;P^{\prime}) = ( A \cup \{l\} ) \setminus \{2\}$ if $r = 1$.
Since $u \in A$ and $2 \le s < u < l$ by~\eqref{eqn:1}, $u \in N^c(1;P^{\prime})$. 
Thus, $P'$ has a crossing.
If $a \in ( A \cap [r+1,u] ) \setminus \{2\} $, then $c(1,a) = c(a,a+1)$ by Lemma~\ref{lma:keyproperties}~(c) and~(d).
Hence, $S(P')$ contains $( A \cap [r+1,u] ) \setminus \{2\}$.
Since $|S|$ is maximal, 
\begin{align}
|S| \ge |S(P^{\prime})| \ge | A\cap[r+1,u]| - \delta_{1,r} \label{eqn:|S'|},
\end{align}
where $\delta_{1,r} =1$ if $r=1$, and $\delta_{1,r}=0$ otherwise.
Recall \eqref{eqn:lA1B} that $1 \notin B$ or $l \notin A$, so $|\{l\}\setminus A| - \delta_{1,r} \ge 0$.
Note that 
\begin{align}
 |C_0| = & |[1,s]\cup [w,l]|  = |A| + 1+|[2,s]\setminus A| +|[w,l] \setminus A| - |[s+1,u] \cap A|. \label{eqn:|C|}
\end{align}
By adding \eqref{eqn:|S'|} and~\eqref{eqn:|C|} together, we have
\begin{align}
 |C_0| + |S| & \ge  |A| + 1+|[2,r]\setminus A|+|[w,l] \setminus A|- \delta_{1,r} +|[r+1,s]| \nonumber \\
	& \ge  |A| + 1+ |\{l\} \setminus A|- \delta_{1,r} +|S\setminus \{r\}|  \nonumber\\
	& \ge  |A| + |S| \ge d +|S|. \label{eqn:|S|+|C|}
\end{align}
This implies $|C_0| \ge d$.
Since $C_0$ satisfies~\eqref{eqn:C-keyproperty}, we have $|C_0| =d$ and $l \le 2d$ (or else $G$ is not a counterexample).
Therefore, all inequalities in~\eqref{eqn:|S|+|C|} are actually equalities.
Moreover, we deduce that
\begin{align}
S = [r,s], \qquad |A|=d \qquad \text{and} \qquad [2,r]\cup [w,l-\delta_{1,r}] \subseteq A. \label{eqn:3.3}
\end{align}
Let $P'' = (i_1, \dots, i_l) \in  \mathcal{R}(P)$.
Recall that $P''$ is a maximal p.c. path as is $P$, so $N^c(i_1;P''), N^c(i_l;P'') \subseteq [l]$ for any choice of $N^c(i_1;P'')$ and $N^c(i_l;P'')$.
Since $|P''| = l \le 2d$ and $|N^c(i_1;P'')|, |N^c(i_l;P'')| \ge \delta^c(G) = d$, $P''$ has a crossing for all choices of $N^c(i_1;P'')$ and $N^c(i_l;P'')$.
So (b) holds.

Note that equality also holds in~\eqref{eqn:|S'|}, so $|S| = |S(P^{\prime})| = |A \cap [r+1,u]|- \delta_{1,r}$.
Recall the sentence above~\eqref{eqn:|S'|} that $S(P^{\prime})$ contains $( A \cap [r+1,u] ) \setminus \{2\}$.
Thus we have 
\begin{align}
S(P^{\prime}) = ( A \cap [r+1,u] ) \setminus \{2\} \text{ and } |S(P')| = |S|. \label{eqn:SP'}
\end{align}
Next we are going to show that $S(P^{\prime})$ is also an interval.
If $|S|=1$, then there is nothing to prove.
If $|S| \ge 2$, then $s(P^{\prime}) \ne r+1$.
Apply Lemma~\ref{lma:keyproperties} and obtain $u(P')$ and $ w(P')$.
By Claim~\ref{clm:|C|}~(a) and~(b) taking $P = P'$, we deduce that $S(P^{\prime})$ is an interval.
Hence, $S(P^{\prime}) = [t,u]$ and so (d) holds.
Recall that $A \cap [u+1, w-1] = \emptyset$ by Lemma~\ref{lma:keyproperties}~(e).
Therefore (c) follows from~\eqref{eqn:3.3} and~\eqref{eqn:SP'}.

Next, are going to prove (e).
If $b =r$, then $P^* = P'$ as defined above and so (e) holds.
Suppose that $r < b \le s$.
Since $b \in S$, so $c(l,l-1) \ne c(l,b) = c(b,b+1)$ by Lemma~\ref{lma:keyproperties}~(a).
So $P^*$ is a p.c. path.
Note that $P^*$ is obtained from $P$ by a rotation with endpoint~$1$ and pivot point~$b$ followed by a reflection, so $P^* \in \mathcal{R}(P)$.
Further assume that $b < t$.
Set $N^c(1;P^*) = A$.
We get $[t,u] \subseteq S(P^*)$ by~(d).
By the maximality of $|S|$, $S(P'') = [t,u]$ and so (e) holds.

To prove (f), note that since $s(P'') \ne i_1$, $u(P'')$ and $w(P'')$ exist by Lemma~\ref{lma:keyproperties}.
Hence, (f) follows from (a)--(e) taking $P = P''$.
This completes the proof of the claim.
\end{proof}

Next, we show that $|S| \ge 2$.

\begin{clm} \label{clm:|S|>2}
$|S| \ge 2$ and $d \ge 4$.
\end{clm}

\begin{proof}[Proof of claim]
If $|S| \ge 2$, then Claim~\ref{clm:|C|}~(c) implies that $d \ge 4$ as all intervals are disjoint and non-empty.
Hence, to prove the claim, it suffices to show that $|S| \ge 2$.
Suppose $|S|=1$.
Recall that we have assumed that $s \ge 2$, so $r =s \ge 2$.
Thus, 
\begin{align}
A =	[2,r] \cup \{u\} \cup [w,l] \label{eqn:|S|>2}
\end{align}
by~Claim~\ref{clm:|C|}~(c), where the intervals are disjoint.
Note that $c(1,l) = c(l,l-1)$ by~\eqref{eqn:c(1,a_p)}.
If $l - 1 \in A$, then $c(1,l-1) =  c(l-1,l-2)$ by~\eqref{eqn:l-1A}.
By taking $P' = (l,l-1, \dots, 1)$, $N^c(1;P') = A$ and $N^c(l;P') = B$, we have $\{ l,l-1\} \subseteq S(P')$.
This implies that $|S(P')| \ge 2$ contradicting the maximality of $|S|$.
Thus, $l-1 \notin A$ and so \eqref{eqn:|S|>2} become $ A =	[2,r] \cup \{u,l\}$.
By Lemma~\ref{lma:keyproperties}~(d), $c(1,u) = c(u,u+1) \ne c(u,u-1)$ and so $(1, 2, \dots, u,1)$ is a p.c. cycle satisfying~\eqref{eqn:C-keyproperty}.
If $u \ge d+1$, then Lemma~\ref{lma:crossing} holds.
Note that $|A| =d$ by Claim~\ref{clm:|C|}~(b) and so we have $u = d$.
Thus, $A = [2,d] \cup \{l\}$.
By Claim~\ref{clm:|C|}~(d),
\begin{align}
c (1,d) = c(d,d+1) \ne c(d,d-1)  \label{eqn:|S|>2b}.
\end{align}
By \eqref{eqn:|S|>2}, $r=d-1$ and so Lemma~\ref{lma:keyproperties}~(a) implies that
\begin{align}
c (l,d-1) = c(d,d-1) \ne c(d-1,d-2).  \label{eqn:|S|>2a}
\end{align}
Let $b \in B \setminus \{d-1,l-1\} \ne \emptyset$ as $d \ge 3$, so $b \in [d,l-2]$.
Also, recall that $a_{d_1}$ is the largest integer in $A$.
Since $ l = a_{d_1}$, \eqref{eqn:c(1,a_p)} implies that $c(1,l) = c(l,l-1) \ne c(l,b)$. 
If $c(b,l) \ne c(b,b-1)$, then $(1,2, \dots, b, l,1)$ is a p.c. cycle of length at least $b +1 \ge d+1$ satisfying~\eqref{eqn:C-keyproperty}, a contradiction. 
Therefore, $c(b,l) = c(b,b-1)$ for all $b \in B \setminus \{d-1,l-1\}$.
If $b = d$, then $c(d,l) = c(d,d-1) = c (l,d-1)$, where the last equality is due to~\eqref{eqn:|S|>2a}.
However, this contradicts the fact that $d-1, d \in B = N^c(l;P)$. 
If $b = d+1$, then $c(d+1,l) = c(d+1,d) = c(1,d)$, where the last equality is due to~\eqref{eqn:|S|>2b}.
Hence, $(1,d,d+1,l)$ is a monochromatic path of length~3.
Let $G'$ be the edge-coloured subgraph of $G$ obtained by removing the edge $(d,d+1)$.
Note that $\delta^c(G')=d$.
By \eqref{eqn:|S|>2b} and~\eqref{eqn:|S|>2a}, the path $$P'=
(d+1, d+2, \dots, l , d-1, d-2, \dots, 1, d)$$ is p.c. in $G$.
Moreover, $P'$ is obtained by a rotation of~$P$ with pivot point~$d-1$ and endpoint~$1$ followed by a rotation with pivot point~$d$ and endpoint~$d$.
Hence, $P' \in \mathcal{R}(P)$ and is maximal in~$G$.
Since $P'$ does not contain the edge $(d,d+1)$, $P'$ is also p.c. and maximal in $G'$.
However, $G'$ contradicts the edge-minimality of~$G$.
Therefore, $B \cap \{d,d+1\}=\emptyset$ and so $B \subseteq \{d-1\} \cup [d+
2,l-1]$ as $r= d-1$.
Together with $|B| \ge d$, this implies that $l \ge 2d+1$, a contradiction. 
This completes the proof of Claim~\ref{clm:|S|>2}.
\end{proof}

In the next claim, we show that if necessary $t$ (as defined in Claim~\ref{clm:|C|}~(c)) may be assumed to be at least~$r+3$.

\begin{clm} \label{clm:t-1notinS}
We may assume that $t \ge r+3$.
Moreover, $d \ge 5$.
\end{clm}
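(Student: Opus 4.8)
The plan is to read this as a normalization (the hedge ``if necessary'' being the operative phrase): assuming the contrary, namely $t \le r+2$, I would either replace $P$ by a suitable member of $\mathcal{R}(P)$ whose associated parameters satisfy the stronger separation, or derive a contradiction with one of the standing hypotheses (no p.c. cycle of length $\ge d+1$ satisfying~\eqref{eqn:C-keyproperty}, no forbidden cycle-plus-path configuration, $|P| \le 2d$, or edge-minimality of $G$). I treat the representative case $r \ne 1$; the $r=1$ case of~\eqref{eqn:distributionofA} is analogous. Recall that by Claim~\ref{clm:|C|}$(b)$ the set $A$ consists of the two low runs $[2,r]$ and $[t,u]$ together with $[w,l]$, that $c(1,i) = c(i,i+1)$ for $t \le i \le u$, and that $c(l,b) = c(b,b+1)$ for $b \in [r,s]=S$. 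The hypothesis $t \le r+2$ says precisely that the gap $[r+1,t-1]$ separating the two low runs contains at most one vertex, so I would first record which of $r+1$ lies in $A$ and pin down the colours $c(1,r+1)$, $c(1,t)$ and the pivot colours at $r$ and at $t$.

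First I would reuse the rotation $P' = (r+1, r+2, \dots, l, r, r-1, \dots, 1) \in \mathcal{R}(P)$ from the proof of Claim~\ref{clm:|C|}, together with the last assertion of that claim: since $P'$ has a crossing and $|S(P')| = |S|$, the rigid description~\eqref{eqn:distributionofA} transfers to $P'$ verbatim under the relabelling $i_j \mapsto j$. I would then express $r(P')$, $s(P')$ and the associated $t(P')$ in terms of $r,s,t,u$ and test whether $t(P') - r(P') \ge 3$; if so, replacing $P$ by $P'$ settles the claim, which is exactly what ``if necessary'' buys us. The point that makes this plausible is that rotating at the pivot $r$ transports the low run $[2,r]$ across the cycle, so that for $P'$ the relevant separation is governed by a different pair of consecutive runs of $A$; choosing, among the finitely many admissible rotations carrying the rigid structure, one that maximizes $t-r$ should then force the desired bound.

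The remaining case is when \emph{no} admissible member of $\mathcal{R}(P)$ achieves $t - r \ge 3$, i.e.\ every relabelled path keeps its two low runs at distance at most one. Here the colour equalities accumulate: the small gap forces $c(1,r+1)$ (and, when $t=r+2$, the colour at the single gap vertex) to agree with adjacent edge-colours along $P$, producing a monochromatic subpath of length at least $3$. Exactly as in Claim~\ref{clm:|S|>2}, I would delete one interior edge of this monochromatic path to obtain $G'$ with $\delta^c(G') = d$ in which the relevant rotations of $P$ survive, so that $P$ remains a maximal counterexample in $G'$, contradicting the edge-minimality of $G$; alternatively, if the monochromatic configuration instead closes into a p.c.\ cycle through $A \cup \{1\}$ satisfying~\eqref{eqn:C-keyproperty} of length $\ge d+1$, or into a forbidden cycle-plus-path, that already contradicts the standing assumptions. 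I expect the main obstacle to be the bookkeeping in the second paragraph: verifying that the rotated path genuinely has a crossing and the \emph{same} maximal value of $|S|$ (so that~\eqref{eqn:distributionofA} really does transfer), while checking that none of the intermediate paths is extensible or spans a p.c.\ cycle of length $\ge d+1$, since a single mistracked colour equality could either destroy the rigidity or hand us an unwanted long cycle.
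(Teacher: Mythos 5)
Your opening move does match the paper's: assume $t \le r+2$, note that $t-1 \in S$ (since $|S| \ge 2$ and $S=[r,s]$ by Claim~\ref{clm:|C|}~$(b)$), rotate at that pivot, and use the last assertion of Claim~\ref{clm:|C|}~$(b)$ to transfer the rigid description~\eqref{eqn:distributionofA} to the rotated path, folding the ``if necessary'' into replacing $P$ whenever some member of $\mathcal{R}(P)$ achieves the separation. (A small discrepancy: the paper normalizes to $r=1$ and $t=3$, not to $r \ne 1$, precisely because $r=1$ is what makes the shift $(\phi(1),\dots,\phi(l)) = (3,4,\dots,l,2,1)$ available for the next stage.) But the substance of the claim is the hard case in which \emph{no} rotation ever separates the two runs, and there your proposal breaks down. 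The equalities that accumulate in that case, $c(1,a)=c(a,a+1)$ for $a \in [t,u]$ and $c(l,b)=c(b,b+1)$ for $b \in S$, relate chords through an endpoint to single path edges; since $A = N^c(1;P)$ and $B = N^c(l;P)$ are colour neighbourhoods, the chord colours at a fixed endpoint are pairwise distinct, so no monochromatic subpath of length~$3$ is forced. The configuration $(1,d,d+1,l)$ in Claim~\ref{clm:|S|>2} arose from the very specific facts $u=d$, $b_2=d+1$ and $c(l,b)=c(b-1,b)$, none of which is available here; your edge-deletion/minimality endgame therefore has nothing to act on. Likewise, ``choose the rotation maximizing $t-r$'' is not an argument: in the hard case every admissible iterate keeps $t=3$ and $r=1$, so the maximum is still $2$ and nothing is forced.

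What the paper actually does in that case is the real content of the proof and is absent from your write-up: it iterates the shift $\phi$ and proves by induction (statements $(i)$--$(vi)$) that the whole rigid structure propagates to every $P_i = (\phi^i(1),\dots,\phi^i(l))$ --- namely $S(P_i)$, $N^c(\phi^i(1);P_i)=A_i$, and, via the auxiliary path $(2,3,\dots,l,1)$ and a further application of Claim~\ref{clm:|C|}~$(b)$, the second-vertex structure $N^c(\phi^i(2);P_i) = \{\phi^i(j) : j \in \{1\} \cup [3,d+1]\}$ with $u'=d+1$ forced on pain of the second assertion of Lemma~\ref{lma:crossing} holding. The dichotomy ``either $t_1>3$, which proves the claim, or the structure transfers'' is invoked at every step of the induction; this is where the hedge ``if necessary'' is actually spent. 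The payoff is a family of colour \emph{inequalities} on short chords, $c(j,j+2) \notin \{c(j-1,j), c(j+1,j+2)\}$ for all $2 \le j \le l-2$, from which $(1,2,\dots,l-2,l,l-1,1)$ is a p.c. cycle spanning $[l]$ --- the contradiction. So the gap is concrete: your proposal predicts accumulating colour \emph{equalities} yielding a monochromatic path, whereas the truth of the matter is accumulating \emph{inequalities} yielding a long properly coloured cycle, and the propagation induction needed to obtain them is missing.
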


\begin{proof}[Proof of claim]
First assume that $t \ge r+3$.
Since $|S| \ge 2$, Claim~\ref{clm:|C|}~(c) implies that $t = u - |S| +1 \le u-1$.
In addition, $(1,2, \dots, t+1,1)$ is a p.c. cycle of length $t+1 \ge r+4 \ge 5$.
Moreover, this cycle satisfies~\eqref{eqn:C-keyproperty}.
Thus, we may assume that $d \ge 5$ or else the lemma holds.
Hence to prove the claim, it suffices to show that $t \ge r+3$.

By Claim~\ref{clm:|C|}~(c), we deduce that $t \ge \max\{r+1, 3\}$.
Suppose the claim is false, so either $t=r+1$ or~$t=r+2$.
Recall Claim~\ref{clm:|C|}~(b) and Claim~\ref{clm:|S|>2} that $S = [r,s]$ and $|S| \ge 2$, so $t-1 \in S$.
If $ r\ne 1$, by Claim~\ref{clm:|C|}~(e) taking $b = t-1$, we obtain a path $P^* = (t, t+1,\dots, l , t-1, t-2,\dots,1) \in \mathcal{R}(P)$.
Moreover, by setting $N^c(1; P^*) = A$, $S(P^*) = [t ,u]$.
Therefore, by replacing $P$ with $P^*$ if necessary, we may assume that $r=1$.
Hence, $t = 3$.
By Claim~\ref{clm:|C|}~(b) and~(c), we deduce that 
\begin{align}
r = 1, \quad s \ge 2, \quad t = 3, \quad u = s+2, \quad S=[1,s],   \quad A = [2 ,s+2] \cup [w,l-1]. \label{eqn:clm:t-1notinS}
\end{align}

Given a path $Q = (i_1,i_2, \dots, i_l)$, we define the path $\phi (Q)$ to be $(i_3, i_4,\dots, i_l ,i_2, i_1)$.
Set $P^0 = P = (1, 2, \dots, l)$.
For $1 \le i \le (l-1)/2$, define $P^i$ to be $\phi (P^{i-1})$ .
Thus, $P^1 = (3,4 , \dots, l , 2,1)$ and $P^2 = (4,5 , \dots, l , 2,1, 4,3)$.
We write $p^i_j$ to be the $j$th vertex of $P^i$. 
For example, $p^0_j = j$ for all $1 \le j \le l$ and $p^2_2 = 5$.
We are going to show that the following statements hold for~$ 0 \le i \le (l-1)/2$ (subject to some choices of the colour neighbourhoods which will become clear):
\begin{enumerate}
 \item[\rm (i)] $P^i \in \mathcal{R}(P)$;
 \item[\rm (ii)] $S(P^i) = \{ p^i_j : 1 \le j \le s \}$ and $|S(P^i)| = s \ge 2$. 
 In particular, $c( p^i_l,  p^i_j) =  c( p^i_j,  p^i_{j+1} )$ by Lemma~\ref{lma:keyproperties}~(a);
 \item[\rm (iii)] $N^c( p^i_1 ; P^i) =A^i$, where $A^i = \{ p^i_j : j \in A \}$;
 \item[\rm (iv)] for $3 \le j \le s+2 $, we have $c( p^i_1, p^i_j) = c(p^i_j,p^i_{j+1}) \ne c(p^i_{j},p^i_{j-1})$;
 \item[\rm (v)] $N^c( p^i_{2};P^i) = \{ p^i_{j} :  j \in \{1\} \cup [3,d+1] \}$;
 \item[\rm (vi)] for $4 \le j \le d+1$, we have $c( p^i_{2},p^i_{j}) = c(p^i_{j},p^i_{j+1}) \ne c( p^i_{j} , p^i_{j-1})$.
\end{enumerate}

First, we are going to show that (i)--(iv) hold by induction on~$i$.
There is nothing to prove when $i=0$ by~\eqref{eqn:clm:t-1notinS} and Claim~\ref{clm:|C|}~(d), so we may assume that $i \ge 1$ and (i)--(iv) hold for~$i-1$.
For simplicity, we may assume that $i=1$ by considering the map $p^{i-1}_j \mapsto j$.
By Claim~\ref{clm:|C|}~(e) taking $b = 2$, we obtain that 
\begin{align*}
P^1 = (p_1^1, p_2^1, \dots, p_l^1) = (3,4 , \dots, l , 2,1) \in  \mathcal{R}(P)
\end{align*}
and so (i) holds.
Set $N^c(p^1_l;P^1) = N^c(1;P^{0}) = A^{0}$ by~(iii).
By Claim~\ref{clm:|C|}~(e) and~(iv), we have $S(P^1) = [3,s+2] =\{ p_j^1 : j \in [s] \}$ implying~(ii).
By Claim~\ref{clm:|C|}~(f) and~(c) (with $P'' = P^1$, $r(P^1) = p^1_1$ and $|S(P^1)| = |S| = s$), there exists an integer $t(P^1) \ge3 $ such that 
\begin{align*}
N^c(  p_1^1 ; P_1) = \{ p_j^1 : j \in \{2\} \cup [t(P^1), t(P^1)+ s-1] \cup [w,l-1] \}.
\end{align*}
(Note that $w$ in the equation above is indeed the same $w$ in~\eqref{eqn:clm:t-1notinS} as $|A^1| = |A^0| =d$.)
In addition, by Claim~\ref{clm:|C|}~(f) and~(d)
\begin{align}
c(p_1^1,p_1^{j}) = c(p_1^{j},p_1^{j+1}) \ne c(p_1^{j},p_1^{j-1}) \nonumber
\end{align}
for all $t(P^1) \le j \le t(P^1)+s-1$.
If $t(P^1) > 3$, then Claim~\ref{clm:t-1notinS} is true by taking $P = P^{1}$, a contradiction.
Thus, $t(P^1) = 3$ implying that $N^c(p_1^1;P^1) =A^{1}$, so both (iii) and (iv) are true.
Therefore, (i)--(iv) hold for all~$0 \le i \le (l-1)/2$.

Next, we show that (i)--(iv) imply (v) and~(vi).
For simplicity, we may assume that $i=0$ by considering the map $p^{i}_j \mapsto j$.
By Claim~\ref{clm:|C|}~(e) taking $b = 1$, we obtain that 
\begin{align*}
P'' = (2,3,\dots,l,1) \in \mathcal{R}(P).
\end{align*}
Moreover, by taking $N^c(1;P'') = ( N^c(1;P^0) \cup \{l\} )\setminus \{2\}$, we have $S(P'')=[3,s+2]$.
Again by Claim~\ref{clm:|C|}~(f) and~(c), we conclude that 
\begin{align}
	N^c(2;P'') = \{3\} \cup [t',u'] \cup [w',l] \cup \{1\} \label{eqn:clm:t-1notinS:5}
\end{align}
for some $t' \le  u' \le w'$ with $t' = u' - s+1$.
(Here, $[w',l]$ may be an empty interval.)
Moreover, Claim~\ref{clm:|C|}~(f) and~(d) imply that
\begin{align}
c(2,j) & = c(j,j+1) \ne c(j,j-1) & \text{for $t' \le j \le u'$.}\label{eqn:clm:t-1notinS:2}
\end{align}
Recall that $2 \in S(P^0)$ by~(ii), so $c(l,2) = c(2,3)$.
This means $l \notin N^c(2;P'')$ and so \eqref{eqn:clm:t-1notinS:5} becomes
\begin{align}
 N^c(2;P'') =  \{3\} \cup [t',u']  \cup \{1\} \nonumber.
\end{align}
Note that $(2, 3, \dots, u',2)$ is a p.c. cycle by~\eqref{eqn:clm:t-1notinS:2} and satisfies~\eqref{eqn:C-keyproperty} for~$P''$.
We may assume that $u' = d+1$ or else Lemma~\ref{lma:crossing} holds.
Therefore, $N^c(2;P'') =  [3,d+1] \cup \{1\}$ and so (v) holds by setting $N^c(2;P^0) = N^c(2;P'')$.
Note that $t' = 4$ and $u' = d+1$, so (vi) is true by~\eqref{eqn:clm:t-1notinS:2}.
In summary, we have shown that (i)--(vi) hold for all $0 \le i \le (l-1)/2$.

Claim~\ref{clm:|S|>2} implies that $s = |S| \ge 2$ and $d \ge 4$.
By (iii) and (ii), we have $c(p^i_1,p^i_3) \ne c(p^i_1,p^i_2) = c(p^i_1,p^i_l)$.
Similarly, we have 
\begin{align*}
c(p^i_1,p^i_3) &\ne c(p^i_3,p^i_2) , & 
c(p^i_2, p^i_4) &\ne c(p^i_2, p^i_1), & 
c(p^i_2, p^i_4) &\ne  c(p^i_4,p^i_3)
\end{align*}
by (iv), (v) and (vi) respectively.
Note that $p^i_j = j +2i$ for $j+2i \le l$ and $p^i_l = 2i-1$ for $1 \le i \le (l-1)/2$.
Therefore, in summary, we have $ c(j-1,j) \ne c(j,j+2)  \ne c(j+1,j+2)$ for $2 \le j \le l-2$.
Set $j = l-2$, so 
\begin{align*}
c(l-3,l-2) \ne c(l-2,l)  \ne c(l-1,l).
\end{align*}
Since $l-1 \in A$ by~\eqref{eqn:clm:t-1notinS}, we have
\begin{align*}
c(1,2) \ne c(1,l-1) = c(l-2,l-1) \ne c(l-1,l),
\end{align*}
where the equality is due to~\eqref{eqn:c(1,a_p)}.
Therefore, $(1, 2,\dots, l-2,l,l-1,1)$ is a p.c. cycle spanning $[l]$.
This is a contradiction, so the claim holds.
\end{proof}

Next we show that $|S|$ is at least three. 

\begin{clm} \label{clm:S>=3}
$|S|\ge 3$.
\end{clm}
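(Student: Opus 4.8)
The plan is to suppose $|S|=2$ and reach a contradiction, keeping the standing hypotheses that the second assertion of Lemma~\ref{lma:crossing} fails (so $|P|=l\le 2d$ and no cycle of length $\ge d+1$ satisfying~\eqref{eqn:C-keyproperty} exists), that $G[V(P)]$ has no spanning p.c.\ cycle, that $d\ge 5$, and that $t\ge r+3$ by Claim~\ref{clm:t-1notinS}. By Claim~\ref{clm:|C|}~$(b)$ I would normalise exactly as at the start of Claim~\ref{clm:t-1notinS} (replacing $P$ by $(t,t+1,\dots,l,t-1,\dots,1)\in\mathcal{R}(P)$ if $r\ne 1$), so that $r=1$, $S=\{1,2\}$, $u=t+1$ and, by~\eqref{eqn:distributionofA}, $A=\{2\}\cup\{t,t+1\}\cup[w,l-1]$ with $t\ge 4$; counting $|A|=d$ gives $w=l-d+3$. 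Lemma~\ref{lma:keyproperties}~$(a)$ applied to $1,2\in S$ records the two identities $c(1,l)=c(1,2)$ and $c(2,l)=c(2,3)$, which are precisely what block the naive closures through $l$ at the endpoint neighbours.

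First I would pin down where $B=N^c(l)$ can sit. For any $b\in B$ with $3\le b\le w-1$, if $c(b,l)\ne c(b,b-1)$ then Lemma~\ref{lma:simplecycle} (with $a=w$, using Lemma~\ref{lma:keyproperties}~$(f)$) produces the p.c.\ cycle $(1,2,\dots,b,l,l-1,\dots,w,1)$, whose length $b+(l-w+1)=b+d-2\ge d+1$ and whose complement $[b+1,w-1]$ is an interval; this contradicts the failure of the second assertion. Hence $c(b,l)=c(b,b-1)$ for every $b\in B\cap[3,w-1]$. Moreover $B\cap[3,w-1]\ne\emptyset$, since otherwise $B\subseteq\{1,2\}\cup[w,l-1]$, a set of size $d-1$, contradicting $|B|=d^c(l)\ge d$. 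Together with~\eqref{eqn:c(1,a_p)}, which here reads $c(1,l-1)=c(l-1,l-2)$, this gives a rigid colour pattern around the whole path.

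To turn this rigidity into a global contradiction I would run the cyclic rotation $\phi=(3,4,\dots,l,2,1)$ of Claim~\ref{clm:t-1notinS}, again legitimate because $2\in S$. As there, I would prove by induction on $i$ that $P_i=(\phi^i(1),\dots,\phi^i(l))\in\mathcal{R}(P)$, that $|S(P_i)|=|S|=2$, and that $N^c(\phi^i(1);P_i)$ and $N^c(\phi^i(2);P_i)$ inherit the block form dictated by Claim~\ref{clm:|C|}~$(b)$ and a short-cycle argument forcing the inner block to terminate at $d+1$, as in the derivation of property $(v)$ in Claim~\ref{clm:t-1notinS}. Translating back to $G$, this should yield diagonal colour constraints of the shape $c(j,j+2)\notin\{c(j-1,j),\,c(j+1,j+2)\}$ on a long run of indices $j$, which combined with the closure identities of the first paragraph let me assemble the spanning p.c.\ cycle $(1,2,\dots,l-2,l,l-1,1)$ --- the desired contradiction. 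Should the run of diagonal constraints fall short of $j=l-2$, the alternative ending is to feed the identities $c(b,l)=c(b,b-1)$ back into the count of forbidden colours at $l$ and conclude $|B|\le d-1$ unless $l\ge 2d+1$, again contradicting $|P|\le 2d$.

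The main obstacle is propagating the diagonal constraints across the two gaps $[3,t-1]$ and $[u+1,w-1]$ in $A$: unlike in Claim~\ref{clm:t-1notinS}, where the inner block $[t,u]$ abutted the neighbour $2$ and the induction swept the constraints through a full interval, here the gaps interrupt the sweep, so the inductive step must be supplemented by the identities $c(b,l)=c(b,b-1)$ to bridge them. The delicate bookkeeping is to verify at each stage that the chosen colour neighbourhoods keep $P_i$ crossing and keep $|S(P_i)|$ maximal, so that Claim~\ref{clm:|C|}~$(b)$ stays applicable, and to treat the boundary indices near $l-1$ and $l$ by hand, exactly as at the end of Claim~\ref{clm:|S|>2}. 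I expect the genuine content of the claim to lie in this bridging step rather than in any single cycle construction.
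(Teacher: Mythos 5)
Your opening normalisation is incompatible with itself, and this is not a cosmetic issue. You invoke Claim~\ref{clm:t-1notinS} to assume $t\ge r+3$ and \emph{simultaneously} normalise $r=1$ by replacing $P$ with $(t,t+1,\dots,l,t-1,\dots,1)$. But that replacement path lies in $\mathcal{R}(P)$ precisely because $t-1\in S$, i.e.\ exactly when $t\le r+2$ --- the situation Claim~\ref{clm:t-1notinS} rules out; under $t\ge r+3$ the vertex $t-1$ need not even be adjacent to $l$. Conversely, any other replacement forcing $r=1$ produces a new path whose block parameters are re-derived afresh from Claim~\ref{clm:|C|}~$(b)$, so the property $t\ge r+3$ does not transfer. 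The paper is explicit on this point: its proof of this claim states ``It should be noted that here $t$ is not necessarily at least $r+3$,'' and it normalises the \emph{opposite} way, to $r\ne 1$ (so $A=[2,r]\cup\{t,t+1\}\cup[w,l]$ by~\eqref{eqn:distributionofA}), then splits into the three cases $w\le l-2$, $w=l-1$, $w=l$. Note also that with $r=1$ Lemma~\ref{lma:keyproperties}~$(a)$ gives $c(1,l)=c(1,2)$, which makes the endpoint rotation $(l-1,l-2,\dots,1,l)$ that your fallback implicitly needs fail to be properly coloured.

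Even granting the normalisation, what you have written is a plan rather than a proof: the $\phi$-rotation induction with diagonal constraints $c(j,j+2)\notin\{c(j-1,j),c(j+1,j+2)\}$ is exactly what breaks when $A$ has the gaps $[3,t-1]$ and $[u+1,w-1]$, and you yourself flag the bridging step as unresolved (``should yield,'' ``I expect the genuine content \dots to lie in this bridging step''). That bridging step \emph{is} the claim; the paper avoids it rather than solving it. There the $\phi$-argument is deployed only in the single subcase $w=l-1$, $t=4$, after the structure has been pinned down to $d=5$ and $A=\{2,4,5,l-1,l\}$, where no gap interrupts the sweep. The other cases are closed by tools your sketch has no substitute for: in Case $w\le l-2$ one forces $l=d+2$, $B=[2,d+1]$, $w=6$ and exhibits the $(d+1)$-cycle $(1,2,3,4,d+2,\dots,6,1)$ directly; in Case $w=l$ the identity $c(l,b)=c(b-1,b)$ (your second-paragraph observation is its analogue) is used \emph{not} to bound $|B|$ directly --- it cannot, since the colours $c(l,b)$ remain pairwise distinct --- but to show $B\cap[d+2,l-2]\subset S(P_0)$ for the rotated path $P_0=(l-1,\dots,1,l)$ and then cap $|B|\le 5$ by the maximality of $|S|$; and ruling out $d+1\in B$ there requires the edge-minimality of $G$ (deleting the edge $(d,d+1)$ lying on the monochromatic path $(1,d,d+1,l)$ and noting the rotated path stays maximal), a device entirely absent from your proposal. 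So the gap is genuine on two counts: the initial reduction is invalid, and the central propagation step, together with any workable fallback, is missing.
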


\begin{proof}[Proof of claim]
Suppose the contrary, so $|S|=2$ by Claim~\ref{clm:|S|>2}.
Without loss of generality $r \ne 1$, otherwise consider the path $P' = (2,3,\dots,l,1)$ instead with $N^c(1;P') = A \cup \{l\} \setminus \{2\}$ (as $l \notin A$ by~\eqref{eqn:lA1B}) by~Claim~\ref{clm:|C|}~(e).
Thus,
\begin{align} 
A = [2,r] \cup \{t,t+1\} \cup [w,l] \label{eqn:clm:S>=3A}
\end{align}
by Claim~\ref{clm:|C|}~(c).
It should be noted that here $t$ is not necessarily at least $r+3$.
We divide into separate cases depending on~$w$.

\noindent\textbf{Case 1: $w \le l-2$.}
Note that $c(1,l-1) = c(l-1,l-2)$ by~\eqref{eqn:l-1A}.
Let $P' = (l,l-1,\dots,1)$ be the reflection of $P$.
Set $N^c(1;P') = A$ and $N^c(l; P') = B$.
Both $l-1$ and $l$ are members of $S(P')$, so  $|S(P')|\ge 2$.
Since $|S(P')| \le |S|=2$, we have $l-2 \notin S(P')$ and so $c(1,l-2) \ne c(l-2,l-3)$ by Lemma~\ref{lma:keyproperties}~(b) taking $P = P'$.
This implies that $(1, 2,\dots, l-2,1)$ is a p.c. cycle. 
We may further assume that this cycle has length at most~$d$ as it satisfies~\eqref{eqn:C-keyproperty} and $G$ is a counterexample.
Hence, $l = d+2$ by~\eqref{eqn:lA1B2}.
Moreover, $B = [2,d+1]$ as $1, d+2 \notin B$ and so $r=2$ and $s=3$.
Since the p.c. cycle $C_0=(1,2,3,d+2,d+1,\dots, w,1)$ has length~$d$ by Claim~\ref{clm:|C|}~(a) and (b), we get $w = 6$.
As $3,4 \in B$ and $s = 3$, we have $c(4,d+2) \ne c(3,d+2) = c(3,4)$.
However, $(1, 2,3, 4,d+2,d+1\dots, 6,1)$ is a p.c. cycle of length $d+1$ satisfying~\eqref{eqn:C-keyproperty} which is a contradiction.

\noindent\textbf{Case 2: $w = l-1$.}
By~\eqref{eqn:clm:S>=3A}, we have $A = [2,r] \cup \{t,t+1,l-1,l\}$
and so $d = |A|=r+3$.
Let $P' = (l,l-1,\dots,1)$ be the reflection of~$P$.
Set $N^c(1;P') = A$ and $N^c(l; P') = B$.
Since $l-1, l \in N^c(1;P')$, we have $l-1,l \in S(P')$ by~\eqref{eqn:l-1A} and~\eqref{eqn:c(1,a_p)} respectively.
Notice that $2 \le |S(P')|\le |S| \le 2$ and so $S(P') = \{l-1, l\}$.
By applying Claim~\ref{clm:|C|}~(f) and~(c) with $P'' = P'$, we have $2 \in N^c(l ; P') = B$.
Thus, $r=2$ by~\eqref{eqn:lA1B} (as $l \in A$) and so $s=3$.
Therefore, 
\begin{align}
A = \{2,t,t+1,l-1,l\} \label{eqn:clm:S>=3A3}
\end{align}
and $d=5$.
By Claim~\ref{clm:|C|}~(d), $c( 1,t+1) = c(t+1,t+2) \ne c(t+1,t)$ as $t+1 \le u$.
This implies that $(1,2,\dots,t+1,1)$ is a p.c. cycle satisfying~\eqref{eqn:C-keyproperty}. 
Hence, either $t =3$ or $t=4$ or Lemma~\ref{lma:crossing} holds.

First suppose that $t = 3$.
By Claim~\ref{clm:|C|}~(e) with $b = 2$, we get $P^* = (3,4,\dots,l,2,1)$ is a member of~$\mathcal{R}(P)$.
Furthermore, by taking $N^c(1;P^*) = A$, we have $S(P^*) = \{3,4\}$.
Apply Claim~\ref{clm:|C|}~(f) and~(c) with $P'' = P^*$, we deduce that $N^c(3;P^*) = \{4\} \cup \{t_3, t_3+1\}\cup \{l,2\}$ for some $t_3$ as $r(P^*) =  3$.
In particular, $4,l \in N^c(3,P^*)$ and so $c(3,4) \ne c(3,l)$.
However, recall the $s=3$, so $c(3,4)=c(3,l)$ by Lemma~\ref{lma:keyproperties}~(a).
This is a contradiction.

If $t=4$, then $A = \{2,4,5,l-1,l\}$ by~\eqref{eqn:clm:S>=3A3}.
By Claim~\ref{clm:|C|}~(e) with $b = 2$, we get $P^* = (3,4,\dots,l,2,1)$ is a member of~$\mathcal{R}(P)$.
Furthermore, by taking $N^c(1;P^*) = A $, we have $S(P^*) = \{4,5\}$.
Note that $s = 3$ and so $c(3,l) = c(3,4)$ implying that $l \notin N^c(3;P^*)$ (as $4 \in N^c(3;P^*)$ by definition).
Apply Claim~\ref{clm:|C|}~(f) and~(c) with $P'' = P^*$, we deduce that there exists an integer $t_3 \in [5,l-2]$ such that $N^c(3;P^*) = \{4\} \cup \{t_3, t_3+1\}\cup \{1,2\}$ as $r(P^*) =  4$.
Moreover, 
\begin{align}
c(3,4) \ne  c(3,t_3) = c(t_3, t_3+1) \ne c(t_3, t_3-1). \label{eqn:t3}
\end{align}
by Claim~\ref{clm:|C|}~(d) (taking $P = P^*$).
Next apply Claim~\ref{clm:|C|}~(e) to $P$ with $b = 3$, we get $P^{\star} = (4,5,\dots,l,3,2,1) \in \mathcal{R}(P)$.
Furthermore, by taking $N^c(1;P^{\star}) = A$, we have $S(P^{\star}) = \{4,5\}$.
Apply Claim~\ref{clm:|C|}~(f) with $P'' = P^{\star}$, we deduce that $N^c(4;P^{\star}) = \{5\} \cup \{t_4, t_4+1\}\cup \{2,3\}$ for some $t_4$ as $r(P^{\star}) =  4$.
Moreover, we have
\begin{align}
c(4,5) \ne c(4,2) = c(2,3) \ne c(1,2), \label{eqn:t4}
\end{align}
where the equality is due to~\eqref{eqn:l-1A} taking $P = P^{\star}$. 
Recall that $l-1 \in A = N^c(1;P)$ and $3 \in S$, so by Lemma~\ref{lma:keyproperties}~(a), we have 
\begin{align*}
c(1,2) \ne c(1,l-1) = c(l-1,l-2) \ne c(l-1,l) \ne c(3,l) =  c(3,4).
\end{align*}
Recall $t_3 \in [5,l-2]$.
Together with~\eqref{eqn:t3} and \eqref{eqn:t4}, we conclude that if $t_3 \ge 6$, then $(5,4,2,1,l-1,l,3,t_3,t_3 -1)$ is p.c..
Similarly if $t_3 = 5$, then $(4,2,1,l-1,l,3,5,4)$ is a p.c. cycle.
Therefore, $C' = (4,2,1,l-1,l,3,t_3,t_3 -1, \dots, 4)$ is a p.c. cycle satisfying~\eqref{eqn:C-keyproperty} with at least $6$ vertices.
So Lemma~\ref{lma:crossing} holds.

\noindent\textbf{Case 3: $w = l$.}
By~\eqref{eqn:clm:S>=3A}, $A = [2,r] \cup \{t,t+1\} \cup \{l\}$.
By Claim~\ref{clm:|C|}~(d), we have $c(1,t+1) = c(t+1,t+2) \ne c(t+1,t)$.
Hence $(1,2, \dots, t+1,1)$ is a p.c cycle satisfying~\eqref{eqn:C-keyproperty}.
This implies that $t+1 \le d$.
In fact, we have $t+1 = d$ as $|A| =d $ by Claim~\ref{clm:|C|}~(b).
Therefore, $A=[2,d] \cup \{l\}$.
In particular, $u = d$, $t=d-1$, $r = d-2$ and $s = d-1$, so  
\begin{align}
 \{d-2,d-1,l-1\} \subseteq  B \subseteq [d-2,l-1]\label{eqn:w=l1}
\end{align}
and $S=\{d-2,d-1\}$.
Let $b \in B \cap [d,l-2]$.
By the definition of~$B$, $c(l,b) \ne c(l,l-1) = c(1,l)$.
If $c(l,b) \ne c(b-1,b)$, then $(1,2,\dots,b,l,1)$ is a p.c. cycle of length~$b+1 \ge d+1$ satisfying~\eqref{eqn:C-keyproperty}, a contradiction.
Thus, 
\begin{align}
c(l,b) & = c(b-1,b) &\text{for all } b \in B \cap [d,l-2]. \label{eqn:w=l}
\end{align}
If $b =d$, then $c(l,d-1) = c(d-1,d) = c(l,d)$ as $d-1 \in S$, which contradicts the fact that $d-1,d \in B = N^c(l)$.
If $b = d+1$, then $(1,d,d+1,l)$ is a monochromatic path of length~3.
Let $G'$ be the edge-coloured subgraph of $G$ obtained by removing the edge $(d,d+1)$.
Note that $\delta^c(G')=d$.
The p.c. path $$P''=(d+1,d+2, \dots, l , d-1, d-2, \dots, 1 ,d)$$ can be obtained by a rotation of~$P$ with pivot point~$d-1$ and endpoint~$1$ followed by a rotation with pivot point~$d$ and endpoint~$d$.
Hence, $P'' \in \mathcal{R}(P)$ is maximal in~$G$ and also in~$G'$ contradicting the edge-minimality of~$G$.
Therefore, $B \cap \{d,d+1\}=\emptyset$, so \eqref{eqn:w=l1} becomes 
\begin{align}
B \subseteq \{d-2,d-1\} \cup [d+2,l-1]. \label{eqn:w=l2}
\end{align}
Recall that $l \in A$, so the p.c. path $P_0 = (l-1, l-2,\dots, 1,l) \in \mathcal{R}(P)$.
Set $N^c(l;P_0) = B \cup \{2\} \setminus \{l-1\}$.
By~\eqref{eqn:w=l}, $B \cap [d,l-2] \subseteq S(P_0)$.
Therefore, Claim~\ref{clm:t-1notinS} and \eqref{eqn:w=l2} imply that 
\begin{align*}
5 \le & d \le |B| =  |\{d-2,d-1, l-1\} \cup (B \cap [d+2,l-2])| \\
\le  & 3+|S(P_0)| \le 3+ |S(P)| \le 5,
\end{align*}
so $d = 5$ and $r=3$.
If we replace $P$ with $P_0$ and repeat all the arguments in the proof of this claim, then we can deduce that $B = \{3,4,l-4,l-3,l-1 \}$.
By~\eqref{eqn:w=l2}, $7 = d+2 \le l-4$ and so $l\ge 11$, which implies Lemma~\ref{lma:crossing}.
This completes the proof of the claim.
\end{proof}

By Claim~\ref{clm:t-1notinS}, we may assume that $t \ge r+3$.
Fix $N^c(r+1;P)$ and $N^c(r+3;P)$.
Next, we are going to show that $(r+1,r+3)$ is an edge such that
\begin{align}
c(r+3,r+4) \ne c(r+1,r+3) \ne c(r+1,r+2). \label{eqn:c(r+1,r+3)}
\end{align}
First, apply Claim~\ref{clm:|C|}~(e) with $ b=r$ and obtain 
\begin{align*}
P_1 = (r+1,r+2 \dots, l , r,\dots,1 ) = (x_1, x_2,\dots, x_l) \in \mathcal{R}(P).
\end{align*}
Set $N^c(1;P_1) = A$ if $r \ne 1$ and $N^c(1;P_1) = (A \cup \{l\}) \setminus \{2\}$ if $r = 1$.
By Claim~\ref{clm:|C|}~(e), $S(P_1) = [t,u]$.
Since $t \ge r+3$, $r(P_1) \ne r+1$. 
Set $N^c(r+1;P_1)= N^c(r+1;P)$, so $r,r+2 \in N^c(r+1;P_1)$.
By Claim~\ref{clm:|C|}~(f) and (c) taking $P'' = P_1$, we have 
\begin{align}
N^c(r+1;P_1) = \{ x_j : j\in [2,r_1] \cup [t_1,u_1] \cup [w_1,l] \} \nonumber
\end{align}
for some $2 \le r_1 < t_1 < u_1 < w_1 \le l$.
Note that $x_{r_1} = r(P_1) = t \ge r+3$, so $r+3 \in N^c(r+1;P_1) = N^c(r+1;P)$.
Hence, $\{r,r+2,r+3\} \subseteq N^c(r+1;P)$.
Second, note that $r+2 \in S$ by Claim~\ref{clm:|C|}~(b) and Claim~\ref{clm:S>=3}.
Apply Claim~\ref{clm:|C|}~(e) with $ b=r+2$ to $P$ and obtain 
\begin{align*}
P_2 = (r+3, r+4,\dots, l , r+2,r+1, \dots,1 ) = (y_1, y_2, \dots, y_l) \in \mathcal{R}(P).
\end{align*}
Set $N^c(1;P_2)  = A$ and $N^c(r+3,P_2) =N^c(r+3,P)$.
Since $r+2 < t$, by Claim~\ref{clm:|C|}~(e) we have $S(P_2) = [t,u]$.
By Claim~\ref{clm:|C|}~(f) and (c) taking $P'' = P_2$, we have $N^c(r+3;P_2)  =  \{y_j : j \in A_2\}$, where
\begin{align}
A_2  = \left\{ \begin{array}{ll}
	[2,r_2] \cup [t_2,u_2] \cup [w_2,l]	& \text{if $t \ne r+3$,}\\
	\{ 2 \} \cup [t_2,u_2] \cup [w_2,l-1]	& \text{if $t = r+3$,}
    \end{array} \right. \label{eqn:A2}
\end{align}
for some $2 \le r_2 < t_2 < u_2 < w_2 \le l$, where $y_{r_2} = t$.
Claim~\ref{clm:|C|}~(d) implies that 
\begin{align}
c(r+3, y_j) & = c(y_j, y_{j+1}) &\text{ for $t_2 \le j \le u_2$.} \label{eqn:r_3}
\end{align}
Let $y_{j'} = r+2$, so $y_{j'-1} = l$.
Recall that $r+2 \in S$, so 
\begin{align}
c(r+3,y_{j'}) = c(r+3,r+2) = c(l,r+2) = c(y_{j'-1},y_{j'})
\nonumber
\end{align}
by Lemma~\ref{lma:keyproperties}~(a).
Since $r+2 \in N^c(r+3;P) = N^c(r+3;P_2)$, \eqref{eqn:A2} and \eqref{eqn:r_3} imply that $w_2 \le j' <l$.
Therefore, $[2,r+2] \subseteq  N^c(r+3;P_2)$.
In particular, $r+1 \in N^c(r+3;P_2) = N^c(r+3,P)$.
In summary, we have shown that $r+3 \in N^c(r+1;P)$ and $r+1\in N^c(r+3;P)$, so \eqref{eqn:c(r+1,r+3)} holds.

Recall Claim~\ref{clm:|C|}~(a) and (b) that $|C_0|=|[1,s]|+|[w,l]| = d$.
If $s+1 \in B$, then $C' = (1,2,\dots,s+1,l,l-1,\dots,w,1)$ is a p.c. cycle of length~$d+1$ satisfying~\eqref{eqn:C-keyproperty}, because $c(l,s+1) \ne c(l,s) = c(s,s+1)$ by Lemma~\ref{lma:keyproperties}~(a).
Hence, $s+1 \notin B$.
Claim~\ref{clm:S>=3} implies that $r+2 \in S$ and so $c(l,l-1) \ne c(l,r+2) = c(r+2,r+3) \ne c(r+2,r+1)$. 
Together with~\eqref{eqn:c(r+1,r+3)}, $C''=(r+1,r+2,l,l-1,\dots,r+3,r+1)$, see Figure~\ref{fig:C3}, is a p.c. cycle containing $( B \cup \{ l,s+1\} )\setminus \{r\}$.
Moreover, $C''$ satisfies~\eqref{eqn:C-keyproperty}, a contradiction as $|C''| \ge d+1$.
The proof of the Lemma~\ref{lma:crossing} is completed.
\begin{figure}[tbp]
\begin{center}
\includegraphics[scale=0.6]{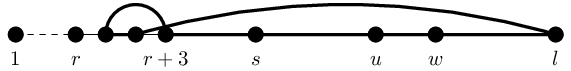}
\end{center}
\caption{Cycle $(r+1,r+2,l,l-1\dots,r+3,r+1)$}
\label{fig:C3}
\end{figure}
\end{proof}

\section{Graphs with short p.c. cycles} \label{sec:k<d}

This section concerns graphs such that no p.c. cycle has length more than some fixed~$k$.
First we prove Proposition~\ref{prp:upper}.

\begin{proof}[Proof of Proposition~\ref{prp:upper}]
Fix $k$ and we proceed by induction on $d$.
For $d=k-1$, Proposition~\ref{prp:upper} holds by considering $\widetilde{G}(d;p)$ for $p \ge d$ as defined in Example~\ref{exm:1}.
Thus, we may assume $d >k-1$.
Let $\mathcal{G}(d-1,k)$ be the family of edge-coloured graphs $G$ with $\delta^{c}(G) \ge d-1$ such that the longest p.c. paths and p.c. cycles in $G$ are of lengths $k 2^{d-k+1}-2$ and~$k-1$ respectively.
Note that $\mathcal{G}(d-1,k)$ exists by induction hypothesis.
Next, we take $p \ge d$ vertex-disjoint copies of members of $\mathcal{G}(d-1,k)$, $H_1, \dots, H_{p}$.
Take a new vertex $x$ and add an edge of a new colour $c_j$ between~$x$ and every vertex of~$H_j$ for each $j \in [p]$.
Call the resulting graph $G'$.
It is easy to see by induction on~$d$ that every vertex in $G'$ has minimum colour degree at least~$d$.
Moreover, the longest p.c. path and p.c. cycle in $G'$ have lengths $k 2^{d-k+2}-2$ and~$k-1$ respectively.
\end{proof}

We are going to prove Theorem~\ref{thm:k<d} in the remainder of this section.
First, we will need the following definitions.
Let $c$ be an edge-colouring of a graph~$G$ such that $\delta^c(G) =d \ge 3$.
Let $P=(1,2, \dots, l)$ be a p.c. path in $G$.
Define $f_i(P)$ to be the resultant path after a rotation of $P$ pivoting at the $i$th element with the last vertex as the fixed endpoint.
Similarly, define $g_j(P)$ to be the resultant path after a rotation of $P$ pivoting at the $j$th element with the first vertex as the fixed endpoint.
Since $P$ is considered as an $l$-tuple, we consider $f_i$ and $g_j$ as permutations on~$P$.
For example, $f_3  \circ g_1(1,2,3,4,5,6) = f_3 (1,6,5,4,3,2) = (6,1,5,4,3,2)$.
Furthermore, we only consider $f_i(P)$ and $g_j(P)$ if $f_i(P)$ and $g_j(P)$ are p.c. paths respectively.
This means that if $1 < i <l$ and $c(1,2) \ne c(1, i) \ne c(i, i+1)$, then $f_i(P)$ is defined, and a similar statement for $g_j(P)$.
Let $\mathcal{R}'(P)$ be the set of p.c. paths that can be obtained by a sequence of rotations of~$P$.
Note that $\mathcal{R}(P) = \{P', h(P') : P' \in \mathcal{R}'(P)\}$, where $h(P')$ is the reflection of $P'$.
We study some basic properties of $\mathcal{R}'(P)$ in the coming proposition.

\begin{prp} \label{prp:figj}
Let $c$ be an edge-colouring of a graph~$G$ such that $\delta^c(G) =d \ge 3$.
Let $P$ be a properly coloured path in~$G$ with $V(P) = V(P')$.
Then the following statements hold:
\begin{description}
\item[(a)] If $P'$ is a p.c. path and $f_i(P') \in \mathcal{R}'(P)$, then $P' \in \mathcal{R}'(P)$.
\item[(b)] If $P'$ is a p.c. path and $g_j(P') \in \mathcal{R}'(P)$, then $P' \in \mathcal{R}'(P)$.
\item[(c)] For $i \le j$, $f_i$ and $g_j$ commute.
\end{description}
Furthermore, suppose that $P'$ has no crossing for all $P' \in \mathcal{R}'(P)$ and all choices of $N^c(x;P')$ and $N^c(y;P')$, where $x$ and $y$ are the endpoints of $P'$.
Then
\begin{description}
\item[(d)] every $P' \in \mathcal{R}'(P)$ can be obtained from $P$ by a sequence of $f_{i_1},\dots, f_{i_a}$ followed by a sequence of $g_{j_1},\dots, g_{j_b}$ and visa versa.
\item[(e)] there exist integers $i_0 \le j_0$ depending only on $\mathcal{R}'(P)$ such that $i_{a'} \le i_0$ and $ j_0 \le j_{b'}$ for $a' \in [a]$ and $b' \in [b]$.
\end{description}
\end{prp}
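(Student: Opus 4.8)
The plan is to establish the five statements essentially in the order $(a)$, $(b)$, $(c)$, $(d)$, $(e)$, since the later claims lean on the structural understanding built by the earlier ones. For $(a)$ and $(b)$, I would argue that each rotation operation is reversible as a single rotation. Recall that $f_i(P')$ performs a rotation pivoting at the $i$th vertex with the last vertex fixed; to recover $P'$ from $f_i(P')$ I would apply a single rotation to $f_i(P')$ (possibly after identifying the correct pivot and fixed endpoint). Concretely, if $P'=(x_1,\dots,x_l)$ and $f_i(P')=(x_1,\dots,x_i,x_l,x_{l-1},\dots,x_{i+1})$, then the new path has last vertex $x_{i+1}$, and the edge condition $c(x_i,x_{i+1})\ne c(x_i,x_l)$ that made $f_i$ valid is precisely the condition that lets us rotate $f_i(P')$ back at the vertex $x_i$ to restore $P'$. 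Since a single rotation applied to a member of $\mathcal{R}'(P)$ stays in $\mathcal{R}'(P)$, this gives $P'\in\mathcal{R}'(P)$. The argument for $(b)$ is the mirror image, fixing the first vertex instead.

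For $(c)$, with $i\le j$ the two rotations $f_i$ (fixing the last vertex, reversing the tail past position $i$) and $g_j$ (fixing the first vertex, reversing the head up to position $j$) act on disjoint portions of the path relative to the segment $[i+1,j]$, so they can be carried out in either order with the same result; I would verify commutativity by tracking the image of a general path $(x_1,\dots,x_l)$ under both compositions and checking the resulting tuples coincide, together with checking that the properness/edge-colour conditions needed for each rotation are unaffected by the other (since the pivot edges lie in disjoint regions). This is a direct computation with permutations, so I would state it compactly rather than grind it out.

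The substantive work is in $(d)$, which asserts that under the hypothesis that no path in $\mathcal{R}'(P)$ has a crossing, every element of $\mathcal{R}'(P)$ admits a \emph{normal form}: all the $f$-rotations can be pushed before all the $g$-rotations. I would prove this by taking an arbitrary word in the $f_i$'s and $g_j$'s representing a given $P'\in\mathcal{R}'(P)$ and repeatedly moving $f$'s left past $g$'s. The key is a commutation/rewriting lemma: whenever a factor $g_j f_i$ appears (an $f$ to the right of a $g$), I want to rewrite it as $f_{i'} g_{j'}$ for suitable indices. When $i\le j$ this is exactly $(c)$. The delicate case is $i>j$, and here is where the no-crossing hypothesis enters: I expect that if such a configuration produced a valid p.c. path, the overlap of the reversed head and reversed tail would force a crossing in some intermediate member of $\mathcal{R}'(P)$, contradicting the hypothesis. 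So the main obstacle is to show that under the no-crossing assumption the ``bad order'' $i>j$ simply cannot arise in a reduced word, or can always be eliminated, guaranteeing the rewriting terminates in the claimed normal form. I would formalize this by defining a suitable monovariant (e.g.\ the number of inversions, or the total displacement of $f$-letters that still sit to the right of some $g$-letter) and showing each rewriting step strictly decreases it while preserving membership in $\mathcal{R}'(P)$ via $(a)$ and $(b)$.

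Finally, for $(e)$, once the normal form from $(d)$ is available I would argue that the set of pivot indices usable by $f$-rotations and those usable by $g$-rotations are bounded and separated by a threshold determined solely by $\mathcal{R}'(P)$. Intuitively, $i_0$ is the largest position at which any $f$-rotation can ever be applied to a normal-form path without creating a crossing, and $j_0$ is the smallest position for a $g$-rotation; the no-crossing hypothesis should force $i_0\le j_0$, because an $f$-pivot beyond a $g$-pivot would again manifest as a crossing. I would extract $i_0$ and $j_0$ as these extremal pivot positions and verify, using $(d)$, that every $f$-index $i_{a'}$ in a normal-form representation satisfies $i_{a'}\le i_0$ and every $g$-index $j_{b'}$ satisfies $j_0\le j_{b'}$, with $i_0\le j_0$ following from the crossing obstruction. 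The crux throughout $(d)$ and $(e)$ is translating ``no crossing'' into a clean statement controlling which pivots are admissible, and I anticipate that pinning down exactly this dictionary between crossings and the relative order of admissible $f$- and $g$-pivots will be the hardest part of the argument.
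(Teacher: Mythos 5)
Your treatment of $(a)$--$(c)$ matches the paper: each rotation is an involution (the paper writes $P'=f_i\circ f_i(P')$, with the properness of $P'$ supplying the colour conditions $c(i-1,i-2)\ne c(i-1,i)\ne c(i,i+1)$ that make the inverse rotation legal), and for $i\le j$ the two rotations act on disjoint initial/terminal segments of the path, so they commute. (Minor slip: in the paper's convention $f_i$ fixes the \emph{last} vertex and reverses the first $i-1$ entries, so your formula $f_i(P')=(x_1,\dots,x_i,x_l,x_{l-1},\dots,x_{i+1})$ is actually a $g$-type rotation; this is harmless by symmetry but your later verbal descriptions of $f_i$ and $g_j$ inherit the confusion.)

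The genuine gap is in $(d)$. You correctly isolate the crux --- that under the no-crossing hypothesis an $f$-rotation applied after a $g$-rotation must have pivot $i\le j$ --- but you never prove it: you write ``I expect \dots would force a crossing'' and fall back on an unspecified rewriting $g_jf_i\mapsto f_{i'}g_{j'}$ with \emph{changed} indices plus a termination monovariant. That fallback is doubly problematic: a rotation at an altered pivot need not even be defined (the colour conditions at the new pivot can fail), and if indices could change during normalization, the bounds in $(e)$, which constrain the indices actually used, would no longer follow from your normal form. The missing step has a short direct proof, which is exactly what the paper does: it suffices to show that if $f_i\circ g_j(P')\in\mathcal{R}'(P)$ then $f_i\circ g_j(P')=g_j\circ f_i(P')$. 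Writing $P'=(1,\dots,l)$, so $g_j(P')=(1,\dots,j,l,l-1,\dots,j+1)$, the fact that $f_i(g_j(P'))$ is defined puts the vertex in position $i$ into some $N^c(1;g_j(P'))$; since $c(j,j+1)\ne c(j+1,j+2)$, one may choose $N^c(j+1;g_j(P'))$ for the last endpoint to contain $j$, which sits in position $j$. The hypothesis that $g_j(P')$ has no crossing then forces, in terms of positions, $i\le\max\{i'\in N^c(1;g_j(P'))\}\le\min\{j'\in N^c(j+1;g_j(P'))\}\le j$, so by $(c)$ the two rotations commute \emph{with unchanged indices}, and normalization is plain transposition sorting --- no monovariant needed. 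With this in hand, $(e)$ is immediate as in the paper: let $i_0$ (resp.\ $j_0$) be the maximal (resp.\ minimal) position of a colour neighbour of the first (resp.\ last) endpoint over all members of $\mathcal{R}'(P)$; then every $f$-index is at most $i_0$, every $g$-index at least $j_0$, and $i_0\le j_0$ comes from the displayed inequality --- whereas in your sketch this, too, rests on the unproven ``crossing dictionary.''
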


\begin{proof}
Let $P'=(1, 2 \dots, l)$ be a p.c. path and so $f_i(P') = (i-1, i-2, \dots,1,i,i+1, \dots,l)$.
Since $P'$ is a p.c. path, we must have $c(i-1,i-2) \ne c(i-1,i)$.
If $i <l$, then $c(i-1,i)\ne c(i,i+1)$.
Thus, $P' = f_i \circ f_i(P') \in \mathcal{R}'(P)$ and so (a) holds.
By a similar argument, (b) holds.
Note that $f_i$ (and $g_j$) reverses the ordering in the first $(i-1)$ elements (and the last $(l-j)$ elements respectively).
Hence, (c) follows easily.

Assume that $P'$ has no crossing for all $P' \in \mathcal{R}'(P)$ and any choices of colour neighbourhoods.
In order to prove (d), it is enough to show that if $f_i \circ g_j(P') \in \mathcal{R}'(P)$, then $f_i \circ g_j(P') = g_j \circ f_i(P')$.
Suppose that $f_i \circ g_j(P') \in \mathcal{R}'(P)$.
By (a) and~(b), we have $g_j(P'),P' \in \mathcal{R}'(P)$.
Recall that $P' = (1,2, \dots, l)$, so $g_j(P') = (1, 2, \dots, j, l, l-1, \dots, j+1)$.
Note that $i \in N^c(1;g_j(P'))$ for some $N^c(1;g_j(P'))$ as $f_i ( g_j(P'))$ is defined.
Fix one such $N^c(1;g_j(P'))$.
Since $P'$ is p.c., we may pick $N^c(j+1;g_j(P'))$ such that $j \in N^c(j+1;g_j(P'))$.
Recall that $g_j(P')$ has no crossing, so 
\begin{align}
i \le \max\{i' \in N^c(1;g_j(P')) \} \le \min\{ j' \in N^c(j+1;g_j(P'))\} \le j. \label{eqn:prp:figj}
\end{align}
By~(c), $f_i \circ g_j(P') = g_j \circ f_i(P')$. 
Hence, (d) holds.
Let $i_0$ (and $j_0$) be the maximal integer $i$ (and the minimal integer $j$) such that $x_{i_0} \in N^c(x_1 ; P'')$ (and $x_{j_0} \in N^c(x_l ; P'')$) for some $P''= (x_1, \dots, x_l) \in \mathcal{R}'(P)$.
Moreover, (e) follows from (d) and~\eqref{eqn:prp:figj}.
\end{proof}

Let $G$ be an edge-colouring graph such that no p.c. cycle has length more than some fixed~$k$.
The next lemma show that the length of every maximal p.c. path grows exponentially in~$\delta^c(G)$. 
Thus, Lemma~\ref{lma:k<dstronger} trivially implies Theorem~\ref{thm:k<d}.
The main idea of the proof of the lemma is as follows.
Let $P=(1,2, \dots, l)$ be a maximal p.c. path in~$G$.
By Lemma~\ref{lma:crossing2} and Lemma~\ref{lma:crossing}, $P$ does not have a crossing.
Our aim is to find integers $1 < x < y <l$ such that $P_x = (1, 2, \dots, x)$ and $P_y = (y+1, y+2, \dots, l)$ are maximal p.c. path in $G_x = G \setminus \{x+1\}$ and $G_y = G \setminus \{y-1\}$ respectively. 
Clearly, $\delta^c(G_x) \ge \delta^c(G)-1$.
By inducting on $\delta^c(G)$, we can deduce that $P_x$ is very long (exponentially in~$\delta^c(G_x)$), and a similar statement holds for $P_y$.
Thus, $P$ is also very long.

\begin{lma} \label{lma:k<dstronger}
Let $k \ge 3$ and $d \ge \lceil 3k/2 \rceil-3$ be integers.
Let $c$ be an edge-colouring of a graph~$G$ such that $\delta^c(G) =d$.
Suppose $G$ does not contain any properly coloured cycle of length at least~$k$.
Then, every maximal properly coloured path in $G$ has length at least $k2^{d-\lceil 3k/2 \rceil+4}-2$.
\end{lma}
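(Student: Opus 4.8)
The plan is to prove Lemma~\ref{lma:k<dstronger} by induction on $d$, following the strategy sketched just before the statement. The base case is $d = \lceil 3k/2 \rceil - 3$, where the claimed bound is $k2^{1} - 2 = 2k - 2$; here I need only that every maximal p.c.\ path has length at least $2d \ge 2(\lceil 3k/2 \rceil - 3) \ge 2k - 2$ for $k \ge 3$, which follows from Theorem~\ref{thm:2d+1} together with the hypothesis that $G$ has no p.c.\ cycle of length at least $k$ (and in particular no p.c.\ cycle of length $\ge d+1$, since $d+1 \ge k$). For the inductive step, let $P = (1, 2, \dots, l)$ be a maximal p.c.\ path. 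By Lemma~\ref{lma:crossing2} and Lemma~\ref{lma:crossing}, $P$ cannot have a crossing (a crossing would force a p.c.\ cycle of length at least $d \ge k$), so no $P' \in \mathcal{R}'(P)$ has a crossing either, and I may invoke all of Proposition~\ref{prp:figj}, in particular parts~$(d)$ and~$(e)$.

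The heart of the argument is to locate the cut vertices $x$ and $y$ promised in the sketch. I would use the integers $i_0 \le j_0$ from Proposition~\ref{prp:figj}$(e)$: these control how far rotations from the two ends can pivot. The idea is that the initial segment $P_x = (1, \dots, x)$ and the final segment $P_y = (y+1, \dots, l)$ should each be maximal p.c.\ paths in a graph of minimum colour degree one less, obtained by deleting a single vertex that ``separates'' the rotation structure. Concretely, I expect to set $x = i_0$ (or a nearby value) and $y = j_0$, delete the vertex $x+1$ to form $G_x = G \setminus \{x+1\}$ and $y-1$ to form $G_y = G \setminus \{y-1\}$, and argue that no rotation of $P_x$ inside $G_x$ can be extended past $x$ — because any such extension would, after translating back via the $f_i$'s and $g_j$'s of Proposition~\ref{prp:figj}$(d)$, produce either a crossing or a p.c.\ cycle of length $\ge k$ in $G$. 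The deletion drops the minimum colour degree by at most one, so $\delta^c(G_x), \delta^c(G_y) \ge d - 1 \ge \lceil 3k/2 \rceil - 3$, and the induction hypothesis applies to give both $|P_x|, |P_y| \ge k2^{(d-1) - \lceil 3k/2 \rceil + 4} - 2 = k2^{d - \lceil 3k/2 \rceil + 3} - 2$. Since $V(P_x)$ and $V(P_y)$ are disjoint subintervals of $P$, I would conclude
\begin{align*}
l \ge |P_x| + |P_y| + (\text{lengths as edges}) \ge 2\bigl(k2^{d - \lceil 3k/2 \rceil + 3} - 2\bigr) + \text{(const)} \ge k2^{d - \lceil 3k/2 \rceil + 4} - 2,
\end{align*}
after reconciling the off-by-constants between ``length'' (number of edges) and ``order'' of the two pieces and accounting for the edges joining the two segments across the deleted vertices.

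\textbf{The main obstacle} I anticipate is verifying that $P_x$ is genuinely \emph{maximal} in $G_x$ — i.e.\ that neither $P_x$ nor any path in $\mathcal{R}(P_x)$ taken inside $G_x$ is extensible. This is where Proposition~\ref{prp:figj}$(d)$--$(e)$ does the real work: because every element of $\mathcal{R}'(P)$ is reachable by first applying $f$-rotations (pivots $\le i_0$) and then $g$-rotations (pivots $\ge j_0$), the rotations available to the left segment are confined to the interval $[1, i_0]$ and do not interact with the vertex $x+1$ or the right half. I must show that any extension of a rotation of $P_x$ in $G_x$ lifts to an extension, crossing, or long cycle for $P$ in $G$, contradicting maximality of $P$ or the no-long-cycle hypothesis. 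Pinning down the exact choice of $x$ relative to $i_0$ (so that $x+1$ is precisely the vertex whose removal severs the right-end rotations while leaving the left-end rotation structure intact) and checking that the removed vertex indeed costs only one colour in the colour degree of every remaining vertex — rather than more — is the delicate bookkeeping, and is presumably the reason for the hypothesis $d \ge \lceil 3k/2 \rceil - 3$.
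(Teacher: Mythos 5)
Your top-level skeleton does match the paper's (induction on $d$ with base case $d=\lceil 3k/2\rceil-3$; no crossing in $\mathcal{R}'(P)$; extremal endpoints $x=\max X$, $y=\min Y$ from the rotation structure of Proposition~\ref{prp:figj}; delete $x+1$ and $y-1$; apply induction to $P_x$ and $P_y$ and add up) --- but this skeleton is essentially the sketch printed just before the lemma, and the step you defer as the ``main obstacle'' contains a genuine gap that your proposed lifting argument cannot close. Your plan is: any extension of a rotation of $P_x$ in $G_x$ lifts, via Proposition~\ref{prp:figj}$(d)$--$(e)$, to an extension, crossing, or long cycle for $P$ in $G$. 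That lifting works, and is how the paper argues, \emph{only in the case that no path in $\mathcal{R}'(P_x)$ has a crossing} (the paper's Claims~\ref{clm:lma:k<dstronger:1} and~\ref{clm:lma:k<dstronger:2} carry it out). The point you miss is that $\mathcal{R}'(P_x)$ is defined using colour neighbourhoods of the endpoints of the \emph{subpath}, so some $P'\in\mathcal{R}'(P_x)$ may have a crossing even though no path in $\mathcal{R}'(P)$ does: a crossing of the initial segment says nothing about the full path, whose right endpoint is still $l$. In that case nothing lifts to a crossing of $P$, and you also cannot invoke Lemma~\ref{lma:crossing} on $P'$, since that lemma requires a \emph{maximal} path --- and maximality of $P_x$ in $G_x$ is exactly what you are trying to establish, so the appeal would be circular (note also $\delta^c(G_x)\ge d-1$ only, and $|S|$ is no longer maximal over rotations, so the proof of Lemma~\ref{lma:crossing} does not transfer verbatim either).

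The paper spends the bulk of its proof on precisely this crossing case: it reruns Lemma~\ref{lma:keyproperties} by hand on $P'$, defines $S=[r,s]\cap B$ and $S'=A\cap[s',x]$, shows $|S|\ge|B|-k+2$ and $|S'|\ge|A|-k+2$ (any colour neighbour far enough from its endpoint that violates the relevant colour condition closes a p.c.\ cycle of length $\ge k$, contradicting the hypothesis), and then counts vertices of the p.c.\ cycle $C=(1,\dots,s,x,x-1,\dots,w,1)$, which must satisfy $|C|\le k-1$, to force $k-1\ge 2d-2k+5$ or $k-1\ge 3(d-k+1)$ --- contradicting $d\ge\lceil 3k/2\rceil-2$. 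This counting is where the hypothesis $d\ge\lceil 3k/2\rceil-3$ is actually consumed, not, as you guess, in checking that deleting one vertex lowers colour degrees by only one (that is automatic). Two smaller points: your base case should not cite Theorem~\ref{thm:2d+1}, which asserts the \emph{existence} of one long path, whereas the lemma bounds \emph{every} maximal path; apply Lemmas~\ref{lma:crossing2} and~\ref{lma:crossing} directly to the given maximal path, as the paper does. And your identification $x=i_0$ is off by the bookkeeping $x=\max\{i-1: i\in N^c(i_1;P'),\ P'\in\mathcal{R}'(P)\}$, which the paper needs in exactly this form to derive the contradiction when an extension reaches a vertex $u\in[l]$ with $u>x+1$.
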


\begin{proof}
Let $P=(1,2,\dots,l)$ be a maximal p.c. path in~$G$.
We are going to show that $l \ge k2^{d-\lceil 3k/2 \rceil+4}-1$ by induction on~$d$.
If $d = \lceil 3k/2 \rceil -3\ge k-1$, then no $P' \in \mathcal{R}'(P)$ has a crossing for all choices of colour neighbourhoods.
Otherwise Lemma~\ref{lma:crossing2} and Lemma~\ref{lma:crossing} imply that $G$ contains a p.c. cycle of length at least $d+1 \ge k$ or $|P| = l \ge 2d+1 \ge 2k-1$ as required.
Since $P$ does not have a crossing, we have
\begin{align*}
 l \ge |(N^c(1;P) \cup \{1\}) \cup (N^c(l;P) \cup \{l\})| \ge 2(d+1)-1 \ge 2k-1.
\end{align*}
Thus, the lemma is true for $d = \lceil 3k/2 \rceil -3$.
Hence, we may assume that $d \ge \lceil 3k/2 \rceil -2 \ge k$.

Define $X = X(P)$ to be the set of all possible $i_1$ such that there exists a path $P' = (i_1, \dots, i_l) \in \mathcal{R}'(P)$.
Similarly, define $Y = Y(P)$ to be the set of all possible $i_l$ such that $P' = (i_1, \dots, i_l) \in \mathcal{R}'(P)$.
Clearly, $1 \in X$ and $l \in Y$.
Let $x = \max \{i \in X\}$ and $y = \min \{j \in Y\}$.
If $f_i(P') \in \mathcal{R}'(P)$, then $i \le x+1$.
If $g_j(P') \in \mathcal{R}'(P)$, then $j \ge y-1$.
By Proposition~\ref{prp:figj}~(e), $x+1 \le y-1$.
Since $P$ is maximal, $N^c(1;P) \subseteq [l]$ for all choices of~$N^c(1;P)$.
If $i' \in N^c(1;P)$ is maximal, then $c(1,i') = c(i',i'-1)$ or else $(1,2, \dots,i',1)$ is a cycle of length at least $d+1$.
Hence, $i' -1 \in X$ as $f_{i'}(P) \in \mathcal{R}'(P)$.
Thus, we have
\begin{align}
x & = \max \{i-1 : i \in N^c(i_1;P') \text{ for all }P' = (i_1, \dots, i_l) \in \mathcal{R}'(P) \text{ and all $N^c(i_1;P')$} \}  \ge d \ge k. \label{eqn:lma:k<dstronger:x}
\end{align}
Without loss of generality, we may assume that $x+1 \in N^c(1;P)$.
By a similar argument, if $j' \in N^c(l;P)$ is minimal, then $j' +1 \in Y$.
By Proposition~\ref{prp:figj}~(d) and (e), we may further assume that $y-1 \in N^c(l;P)$.

Let $P_x= (1,2, \dots ,x)$ and $P_y = (y, y+1,\dots, l)$ be p.c. paths.
Let $G_x = G \setminus \{x+1\}$ and $G_y = G \setminus \{y-1\}$.
Clearly, $\delta^c(G_x) , \delta^c(G_y) \ge d-1$.
Suppose that $P_x$ and $P_y$ are maximal in $G_x$ and $G_y$ respectively.
By the induction hypothesis, $|P_x|, |P_y| \ge k2^{d-\lceil 3k/2 \rceil+3}-1$.
Note that $x+1$ is not a vertex in $P_x$ nor $P_y$, so
\begin{align*}
l \ge |P_x| + |P_y| +1 \ge 2(k2^{d-\lceil 3k/2 \rceil+3}-1)+1 = k2^{d-\lceil 3k/2 \rceil+4}-1
\end{align*}
as required.
Hence, in proving the lemma, it suffices (by symmetry) to show that $P_x$ is maximal in~$G_x$.

\begin{clm} \label{clm:lma:k<dstronger:1}
Let $P' =(i'_1,i'_2, \dots, i'_x)$ and $P'' = (i''_1,i''_2, \dots, i''_x)$ be p.c. paths with $V(P') = V(P'') = [x]$.
\begin{description}
\item[(a)] If $P' = f_{j_a} \circ \dots \circ f_{j_1} ( P'' )$, then $i'_x = i''_x$ and $c(i'_x,i'_{x-1}) = c(i''_x, i''_{x-1})$.
Moreover, if $P'' = P_x$, then $(i'_1,i'_2, \dots, i'_x,x+1,x+2, \dots, l) \in \mathcal{R}'(P)$.
\item[(b)] If $P' = g_{j_b} \circ \dots \circ g_{j_1} (P'')$, then $i'_1 = i''_1$ and $c(i'_1,i'_{2}) = c(i''_1,i''_2)$.
Moreover, if $P'' = P_x$, then $(i'_x,i'_{x-1}, \dots, i'_1,x+1,x+2, \dots, l) \in \mathcal{R}'(P)$.
\end{description}
\end{clm}

\begin{proof}
Note that $f_j$ fixes the last two elements unless $j = x$.
Thus, in proving the first assertion of (a) it is enough to consider the case when $a=1$ and $j_1 =x$.
Note that $i'_x = i''_x$ and $i''_{x-1} = i'_1$.
Also $c(i'_1,i'_2) = c(i''_{x-1} ,i''_{x-2}) \ne c(i''_{x-1} ,i''_{x}) =  c(i'_1, i'_{x})$ as $P''$ is a p.c. path.
If $c(i'_x,i'_{x-1}) \ne c(i''_x, i''_{x-1}) = c(i'_x, i'_{1})$, then $(i'_1,i'_2, \dots, i'_x, i'_1)$ is a p.c. cycle of length $x \ge k$ by~\eqref{eqn:lma:k<dstronger:x}, a contradiction.
Thus, we have $c(i'_x,i'_{x-1}) = c(i''_x, i''_{x-1})$.
Now suppose that $P'' = P_x$.
Recall that $c(x-1,x) \ne c(x,x+1)$ as $P$ is a p.c. path, so the `moreover' statement follows.

By a similar argument, first assertion of (b) also holds.
Since $x+1 \in N^c(1;P)$, we get $c(1,2) \ne c(1,x+1)$.
Also, $c(1,x+1) = c(x,x+1)$ or else $(1, 2, \dots, x+1, 1)$ is a p.c. cycle of length $x >k$ by~\eqref{eqn:lma:k<dstronger:x}.
Thus, the `moreover' statement of (b) follows.
\end{proof}

First suppose that no $P'_x \in \mathcal{R}'(P_x)$ has a crossing for all choices of colour neighbourhoods in $G_x$.
Let $X_x = X(P_x)$ and $Y_x = Y(P_x)$ with respect to~$G_x$.
If $P'_x = (i_1, \dots, i_x) \in \mathcal{R}'(P_x)$ is extensible in $G_x$, then there exists $u \notin [x+1]$ such that $(i_1, \dots, i_x, u)$ or $(u,i_1, \dots, i_x)$ is a p.c. path in $G_x$.
Assume that $(i_1, \dots, i_x, u)$ is a p.c. path in $G_x$.
Note that $c(i_x, u ) \ne c(i_x, i_{x-1})$.
By Proposition~\ref{prp:figj}~(d),
$P'_x = f_{i_a} \circ \dots \circ f_{i_1} \circ g_{j_b} \circ \dots \circ g_{j_1} (P_x)$.
By Proposition~\ref{prp:figj}~(a), we may assume without loss of generality that 
$P'_x = g_{j_b} \circ \dots \circ g_{j_1} (P_x)$ as the statement $c(i_x, u ) \ne c(i_x, i_{x-1})$ still holds by Claim~\ref{clm:lma:k<dstronger:1}~(a).
Fix $N^c_{G_x}(i_x;P_x')$ such that $u \in N^c_{G_x}(i_x;P_x')$.
By Claim~\ref{clm:lma:k<dstronger:1}~(b), we have $P' = (i_x, i_{x-1}, \dots, i_1, x+1,x+2, \dots,l) \in \mathcal{R}'(P)$.
Pick $N^c_G(i_x;P')$ such that $N^c_{G_x}(i_x;P_x') \subseteq N^c_G(i_x;P')$.
Hence, $u \in N^c_G(i_x;P')$ and $u \notin [x+1]$.
If $u \in [l]$, then $u > x+1$ contradicting~\eqref{eqn:lma:k<dstronger:x}.
If $u \notin[l]$, then $P'$ is extensible contradicting the maximality of~$P$.
A similar argument also holds if $(u,i_1, \dots, i_x)$ is a p.c. path in $G_x$.
Thus, every $P'_x \in \mathcal{R}'(P)$ is not extensible in $G_x$ and so $P_x$ is maximal in~$G_x$ as required.

Now suppose there exists a p.c. path $P^* = (i_1, \dots, i_x)  \in \mathcal{R}'(P_x)$ that has a crossing in $G_x$ for some $N^c_{G_x}(i_1;P')$ and $N^c_{G_x}(i_x;P^*)$.
The next claim allows us to assume without loss of generality that $i_1, i_x \in X$.

\begin{clm}	\label{clm:lma:k<dstronger:2}
There exists a p.c. path $P^* = (i_1, \dots, i_x)  \in \mathcal{R}'(P_x)$ such that the following statements hold:
\begin{itemize}
	\item[\rm (a)] there exist $N^c_{G_x}(i_1;P^*)$ and $N^c_{G_x}(i_x;P^*)$ such that
$P^*$ has a crossing.
	\item[\rm (b)] $i_1,i_x \in X$.
	\item[\rm (c)] $P^*$ is not extensible in $G_x$.
Moreover, $N^c_G(i_1;P^*) \cup N^c_G(i_x;P^*) \subseteq [x+1]$ for any choices of colour neighbourhoods.
	\item[\rm (d)] If $x+1 \in N^c_G(i_1,P^*)$, then $(i_x,i_{x-1}, \dots, i_1,x+1, x+2, \dots,  l) \in \mathcal{R}'(P)$.
	\item[\rm (e)] If $x+1 \in N^c_G(i_x,P^*)$, then $(i_1,i_2, \dots, i_x,x+1, x+2,\dots,  l) \in \mathcal{R}'(P)$.
\end{itemize}
\end{clm}

\begin{proof}[Proof of claim]
Choose a p.c. path $P^* = (i_1, \dots, i_x)  \in \mathcal{R}'(P_x)$, which has a crossing for some $N^c_{G_x}(i_1;P^*)$ and $N^c_{G_x}(i_x;P^*)$.
Notice that such $P^*$ exists and satisfies~(a).
For integers $a' \ge 0$, let $\mathcal{R}'_{a'}(P)$ be the set of $P' \in \mathcal{R}'(P_x)$ that can be obtained from $P_x$ by using precisely $a'$ rotations.
Clearly, $\mathcal{R}'_0(P_x) = \{P_x\}$.
We further assume that $P^*$ is chosen such that $P^* \in \mathcal{R}'_{a_0}(P_x)$ with $a_0$ is minimal.
This implies that every $P' \in \bigcup_{0 \le a' < a_0} \mathcal{R}'_j(P_x)$ has no crossing for all choices of colour neighbourhoods.

Next we are going to show that, for all $ 0 \le a' \le a_0$, every $P_{a'} \in \mathcal{R}'_{a'}(P_x)$ can be written as 
\begin{align}
 P_{a'} = g_{j'_b} \circ \dots \circ g_{j'_1} \circ f_{j_a} \circ \dots \circ f_{j_1} (P_x)  = f_{j_a} \circ \dots \circ f_{j_1} \circ g_{j'_b} \circ \dots \circ g_{j'_1}(P_x) \label{eqn:P'rot3}
\end{align}
for some $j_1, \dots, j_a, j'_1, \dots, j'_b$ with $a +b = a'$.
We now prove~\eqref{eqn:P'rot3} by induction on~$a'$.
Note that \eqref{eqn:P'rot3} holds trivially for $a' \le 1$ and so are going to show that \eqref{eqn:P'rot3} holds for $a' \ge 2$.
Let $P_{a'} \in \mathcal{R}'_{a'}(P_x)$, so $P_{a'} = f_{j}(P')$ or $P_{a'} = g_{j}(P')$ for some $j$ and $P'\in \mathcal{R}'_{{a'}-1}(P_x)$.
We will only consider the case when $P_{a'} = f_{j}(P')$ (as similar argument holds for the other cases).
By induction hypothesis, we can write $P' = g_{j'_b} \circ \dots \circ g_{j'_1} \circ f_{j_a} \circ \dots \circ f_{j_1} (P_x)$ with $a+b = a'-1$.
In order to prove \eqref{eqn:P'rot3} holds for~$P_{a'}$, by Proposition~\ref{prp:figj}~(c), it is enough to show that $\max \{j'_1, \dots, j_b'\} \le j$.
Moreover, it suffices to consider the case when $b = 1$ and $j' = j'_1$.
Let $P' = (i_1', i_2', \dots, i_x')$.
Let $P'' = f_{j_a} \circ \dots \circ f_{j_1} (P_x)$, so $P'' \in \mathcal{R}'_{a'-2}(P_x)$ and $P' = g_{j'}(P'')$.
This means that $P'' = g_{j'}(P')$ and so we can pick $N^c_{G_x}(i'_x; P')$ with $i'_{j'} \in N^c_{G_x}(i'_x; P')$.
Since $P_{a'} = f_{j}(P')$, we can pick $N^c_{G_x}(i'_1; P')$ with $i'_j \in N^c_{G_x}(i'_x; P')$.
Recall that $P' \in \mathcal{R}'_{a'-1}(P_x)$ has no crossing, so $j'\le j$.
Hence, \eqref{eqn:P'rot3} holds.

Recall that $P^* \in \mathcal{R}_{a_0}'$, so we may assume that 
\begin{align}
 P^* = g_{j'_b} \circ \dots \circ g_{j'_1} \circ f_{j_a} \circ \dots \circ f_{j_1} (P_x) = \mathcal{G} \circ \mathcal{F} (P_x) = \mathcal{F} \circ \mathcal{G} (P_x), \label{eqn:P'rot}
\end{align}
where $a +b = a_0$, $\mathcal{G} = g_{j'_b} \circ \dots \circ g_{j'_1}$ and $\mathcal{F} = f_{j_a} \circ \dots \circ f_{j_1} $.
Let 
\begin{align}
P' = \mathcal{F} (P_x) = (i'_1,i'_2, \dots, i'_x).	\label{eqn:P'rot2}
\end{align}
By Claim~\ref{clm:lma:k<dstronger:1}~(a), we have $i'_1 \in X$.
Since $P^* = \mathcal{G}(P') = g_{j'_b} \circ \dots \circ g_{j'_1} (P')$ and each $g_{j'_{i'}}$ fixed the first vertex, we deduce that $i'_1 = i_1$ and so $i_1 \in X$.
To show that $i_x \in X$, we consider $P^* =\mathcal{F} \circ \mathcal{G}(P_x)$ instead.
Let 
\begin{align}
P'' = \mathcal{G}(P_x) = (i''_1,i''_2, \dots, i''_x).	\nonumber
\end{align}
By Claim~\ref{clm:lma:k<dstronger:1}~(b), we have $i''_x \in X$.
Since $P^* = \mathcal{F}(P') = f_{j_a} \circ \dots \circ f_{j_1} (P'')$ and each $f_{j_{i}}$ fixed the last vertex, we deduce that $i''_x = i_x$ and so $i_x \in X$.
Hence, $P^*$ satisfies~(b).

Suppose that $N^c_{G_x}(i_1;P^*) \not\subseteq [x]$.
Let $u \in N^c_{G_x}(i_1;P^*) \setminus [x]$, so $c(i_1,i_2) \ne c(i_1, u)$.
Recall \eqref{eqn:P'rot2} that $P' = \mathcal{F} (P_x) = (i'_1,i'_2, \dots, i'_x)$ and $P^* = \mathcal{G} (P')$.
By Claim~\ref{clm:lma:k<dstronger:1}~(b), we have $c(i'_1,i'_2) = c(i_1,i_2)$ and $i_1 = i'_1$.
So we can set $N^c_{G_x}(i'_1;P') = (N^c_{G_X}(i_1;P^*) \setminus \{i_2 \}) \cup \{i'_2\}$.
By Claim~\ref{clm:lma:k<dstronger:1}~(a), we have $$\widetilde{P} = (i'_1,i'_2, \dots, i'_x, x+1, x+2, \dots, l) \in \mathcal{R}'(P).$$
Pick $N^c_{G}(i'_1;\widetilde{P})$ such that $N^c_{G_x}(i'_1;P') \subseteq N^c_{G}(i'_1;\widetilde{P})$.
Since $u \in N^c_{G_x}(i'_1;\widetilde{P})$ and $u \notin [x]$, we have $c(i'_1,i'_2) \ne c(i_1',u)$ and $u \notin [x+1]$.
If $u \in [l]$, then $u > x+1$ contradicting~\eqref{eqn:lma:k<dstronger:x}.
If $u \notin[l]$, then $\widetilde{P}$ is extensible contradicting the maximality of~$P$.
Therefore, $N^c_{G_x}(i_1;P^*) \subseteq [x]$ for all choices of colour neighbourhoods.
This implies that $N^c_{G}(i_1;P^*) \subseteq [x+1]$ for all choices of colour neighbourhoods.
By a similar argument, we have $N^c_{G_x}(i_x;P^*) \subseteq [x]$ and  $N^c_G(i_x;P^*) \subseteq [x+1]$.
So $P^*$ satisfies property~(c).

Now suppose that $x+1 \in N^c_G(i_1, P^*)$ and so $x+1 \in N^c_G(i'_1, \widetilde{P})$ by setting $N^c_G(i'_1, \widetilde{P}) = (N^c_{G}(i_1;P^*) \setminus \{i_2 \}) \cup \{i'_2\}$.
If $c(x+1, i'_x) \ne c(i'_1, x+1)$, then $(i'_1, \dots, i'_x, x+1)$ is a p.c. cycle of length at least $d+1 \ge k$, a contradiction.
Hence, $f_{x+1} ( \widetilde{P})$ exists.
Note that $f_{x+1}$ reserves the ordering of the first $x$ elements in $\widetilde{P}$, so we can view it as a reflection on $P'$.
Therefore,
\begin{align*}
 (i_x,i_{x-1}, \dots, i_1,x+1, x+2, \dots, l) =  f_{x-j'_b+1} \circ \dots \circ f_{x-j'_1+1} \circ f_x (\widetilde{P})
\end{align*}
is a member of $\mathcal{R}'(P)$, so (d) holds.
Finally, (e) is proved by a similar argument used to prove~(d).
\end{proof}

For convenience, we abuse the notation and assume that $P^* = (1,2, \dots,x)$, so $1$ and $x$ are not necessarily adjacent to $x+1$.
Let $N^c_{G_x}(1;P^*)$ and $N^c_{G_x}(x;P^*)$ such that $P^*$ has a crossing.
Note that \begin{align}
\text{$1 \notin N^c_{G_x}(x;P^*)$ or $x \notin N^c_{G_x}(1;P^*)$.} \label{eqn:lA1B:2}
\end{align}
Otherwise, $(1, 2, \dots, x,1)$ is a p.c. cycle of length $x \ge k$ by~\eqref{eqn:lma:k<dstronger:x}.
We may assume that $1 \notin N^c_{G_x}(x;P^*)$.
Let $r = r(P^*) = \min\{b \in B\}$, so $r \ge 2$.
Recall that $\delta^c(G_x) \ge d-1$ and $P^*$ is not extensible in $G_x$ by Claim~\ref{clm:lma:k<dstronger:2}~(c).
By Lemma~\ref{lma:keyproperties} taking $G = G_x$ and $P = P^*$, we can find an integer $s= s(P^*)$ satisfying Lemma~\ref{lma:keyproperties}~$(a)-(c)$.
Let $S = S(P^*) = [s] \cap N^c_{G_x}(x;P^*)$.
If $b \in N^c_{G_x}(x;P^*)$ and $b \le x-k+1$, then $c(x,b) = c(b,b+1)$ or else $(b,\dots,x,b)$ is a p.c. cycle of length at least~$k$.
Thus, 
\begin{align}
|S|\ge |N^c(x;P^*) \cap [x-k+1]| \ge d^c_{G_x}(x)-k+2. \label{eqn:lma:k<dstronger:|S|}
\end{align}
Let $P^{\star} = (x, x-1, \dots, 1)$ be the reflection of $P^*$.
Set $N^c_{G_x}(1;P^*)  =  N^c_{G_x}(1;P^{\star})$ and $N^c_{G_x}(x;P^*)  =  N^c_{G_x}(x;P^{\star})$.
So $P^{\star}$ has a crossing. 
Apply Lemma~\ref{lma:keyproperties} to obtain $s^{\star} = s(P^{\star})$ and set $S^{\star} = S(P^{\star}) = [ s^{\star} , x] \cap N^c_{G_x}(1;P^*)$.
By a similar argument, we have 
\begin{align}
|S^{\star}|\ge d^c_{G_x}(1)-k+2. \label{eqn:lma:k<dstronger:|S'|}
\end{align}
Recall that $r \ge 2$, so $s \ge 2$.
Hence, we can find $u=u(P^*),w=w(P^*) \in N^c_{G_x}(1;P^*)$ satisfying Lemma~\ref{lma:keyproperties}~$(d)-(f)$.
Since $c(1,s^{\star}) = c(s^{\star}, s^{\star}-1)$ by Lemma~\ref{lma:keyproperties}~(a), $u < s(P^{\star})$ and so $w \le s(P^{\star})$ by Lemma~\ref{lma:keyproperties}~(e).
This means that $S(P^{\star}) \subseteq [w,x]$.
By Lemma~\ref{lma:keyproperties}~(a) and~(f) and Lemma~\ref{lma:simplecycle}, 
\begin{align*}
C_0=(1,2\dots,s,x, x-1,\dots, w, 1)
\end{align*}
is a p.c. cycle.
By our assumption in the hypothesis of Lemma~\ref{lma:k<dstronger}, we know that $|C_0| \le k-1$.
For the remainder of the proof, our aim is to show that this would lead to a contradiction.

Note that $|\{1,x\} \setminus (N^c_{G_x}(1;P^*) \cup N^c_{G_x}(x;P^*) )| \ge 1$ by~\eqref{eqn:lA1B:2}.
If $ d^c_{G_x}(1)+d^c_{G_x}(x) \ge 2d - 1$, then by~\eqref{eqn:lma:k<dstronger:|S|} and~\eqref{eqn:lma:k<dstronger:|S'|}
\begin{align*}
 k-1 & \ge |C_0| =  |[1,s] \cup [w,x]| \\
	& \ge   |S| + |S^{\star}| + |\{1,x\} \setminus (S\cup S^{\star})| \\
	& \ge   2d-2k+3 + |\{1,x\} \setminus (N^c_{G_x}(1;P^*) \cup N^c_{G_x}(x;P^*) )|
	\ge 2d-2k+4,\\
	(3k-5)/2 & \ge  d, 
\end{align*}
which is a contradiction as $d$ is assumed to be at least $\lceil 3k/2 \rceil -2$.
Therefore, we may assume that $ d^c_{G_x}(1)+d^c_{G_x}(x) \le 2d - 2$.
Recall that $\delta^c(G_x) \ge d-1$. 
This means we have $d^c_{G_x}(1) = d-1 = d^c_{G_x}(x)$.
Therefore, $x+1 \in N^c_{G}(1;P^*)$ and $x+1 \in N^c_{G}(x;P^*)$ for all choices of colour neighbourhoods.
In summary, $P^*$ satisfies the following properties:
\begin{itemize}
	\item[($\alpha$)] $P^*= (1,2, \dots, x)$ is not extensible in $G_x$ and has a crossing with respect to some colour neighbourhoods;
	\item[($\beta$)] $d^c_{G_x}(1) = d-1 = d^c_{G_x}(x)$. In particular, $x+1 \in N^c_{G}(1;P^*)$ and $x+1 \in N^c_{G}(x;P^*)$ for all choices of colour neighbourhoods;
	\item[($\gamma$)] $(1,2, \dots, x, x+1, x+2,\dots,  l) \in \mathcal{R}'(P)$;
	\item[($\delta$)] $(x,x-1, \dots, 1, x+1, x+2,\dots,  l) \in \mathcal{R}'(P)$.
\end{itemize}
(Hence, our assumption that $P^* = (1,2, \dots, x)$ is in fact valid.)
Conditions ($\gamma$) and $(\delta)$ are implied by ($\beta$) and Claim~\ref{clm:lma:k<dstronger:2}~(d) and~(e).
Note that our argument after the proof of Claim~\ref{clm:lma:k<dstronger:2} actually shows that any $P' \in \mathcal{R}'(P_x)$ satisfying ($\alpha$) also satisfies ($\beta$).
Note that if $P' \in \mathcal{R}'(P_x)$ satisfies ($\alpha$)--($\delta$), then its reflection also satisfies ($\alpha$)--($\delta$).

We now mimic the proof of Lemma~\ref{lma:crossing} on $P^*$.
From now on, we further assume that $|S|= |S(P^*)| \ge |S(P'')|$ for all $P'' \in \mathcal{R}(P^*)$ satisfying ($\alpha$)--($\delta$).

If $|S| \ge 2$, then $s \ge 2$.
If $|S| =1$, then, by \eqref{eqn:lA1B:2} and taking the reflection of $P^*$ if necessary, we may assume that $s \ge 2$ .
Thus, $u$ and $w$ exist and $C_0=(1,2\dots,s,x, x-1,\dots, w, 1)$ is a p.c. cycle.
By Lemma~\ref{lma:keyproperties}~(a), the path $P^{\prime} = g_{r}(P^*) = (1, 2,\dots, r, x, x-1\dots, r+1)\in \mathcal{R}(P^*)$.
By Claim~\ref{clm:lma:k<dstronger:1}~(b), $P'$ satisfies ($\delta$) as
\begin{align}
(r+1,r+2, \dots, x, r,r-1, \dots, 1, x+1,x+2, \dots, l) \in \mathcal{R}'(P) \nonumber
\end{align}
So $N^c_{G}(r+1;P') \subseteq [x+1]$ for all choices of colour neighbourhoods by~\eqref{eqn:lma:k<dstronger:x}.
Hence, $N^c_{G_x}(r+1;P') \subseteq [x]$ for all choices of colour neighbourhoods.
Pick $N^c_{G_x}(r;P')$ with $r-1,r+1 \in N^c_{G_x}(r;P')$.
By considering 
\begin{align}
N^c_{G_x}(1;P') =
\begin{cases}
N^c_{G_x}(1;P^*) & \text{if }r \ne 1,\\
(N^c_{G_x}(1;P^*) \setminus \{2\}) \cup \{x\} & \text{if }r = 1,
\end{cases}
\label{eqn:NC}
\end{align}
$P'$ has a crossing in $G_x$.
We further deduce that $P'$ is not extensible in $G_x$ by considering~\eqref{eqn:NC} for arbitrary colour neighbourhoods.
Thus, $P'$ satisfies ($\alpha$).
This means that $P'$ also satisfies ($\beta$) and so $c(r+1,r+2) \ne c(r+1,x)$.
If $c(r+1,x) \ne c(x+1,1)$, then $(r+1,r+2, \dots, x, r,r-1, \dots, 1, x+1,r+1)$ is a p.c. cycle of length $x+1 \ge k $ by~\eqref{eqn:lma:k<dstronger:x}.
Hence, $c(r+1,x) = c(x+1,1) \ne c(x+1,x+2)$ implying that $P'$ satisfies ($\gamma$).
Let $P''$ be the reflection of $P'$, so $P'' \in \mathcal{R}(P^*)$ satisfying ($\alpha$)--($\delta$) with $N^c(i; P'') = N^c(i;P')$ for $i \in \{1, r+1\}$.

If $a \in ( N^c(1;P^*) \cap [r+1,u] ) \setminus \{2\} $, then $c(1,a) = c(a,a+1)$ by Lemma~\ref{lma:keyproperties}~(c) and~(d).
Hence, $S(P'')$ contains $( N^c(1;P^*) \cap [r+1,u] ) \setminus \{2\}$.
Since $|S|$ is maximal, 
\begin{align}
|S| \ge |S(P'')| \ge | N^c(1;P^*) \cap[r+1,u]| - \delta_{1,r} \label{eqn:|S'|:2},
\end{align}
where $\delta_{1,r} =1$ if $r=1$, and $\delta_{1,r}=0$ otherwise.
Recall \eqref{eqn:lA1B:2} that $1 \notin N^c(x;P^*)$ or $x \notin N^c(1;P^*)$, so $|\{x\}\setminus N^c(1;P^*)| - \delta_{1,r} \ge 0$.
Note that 
\begin{align}
 |C_0| & =  |[1,s]\cup [w,x]|  \nonumber \\
& = |N^c(1;P^*)| + 1+|[2,s]\setminus N^c(1;P^*)| +|[w,l] \setminus N^c(1;P^*)| - |[s+1,u] \cap N^c(1;P^*)|. \label{eqn:|C|:2}
\end{align}
By adding \eqref{eqn:|S'|:2} and~\eqref{eqn:|C|:2} together, we have
\begin{align}
 |C_0| + |S| & \ge  |N^c(1;P^*)| + 1+|[2,r]\setminus N^c(1;P^*)|+|[w,x] \setminus N^c(1;P^*)|- \delta_{1,r} +|[r+1,s]| \nonumber \\
	& \ge  d-1 + 1+ |\{x\} \setminus A|- \delta_{1,r} +|S\setminus \{r\}|  \nonumber\\
	& \ge  d-1 + |S|. \nonumber 
\end{align}
This implies $|C_0| \ge d-1 \ge k$, a contradiction.
This completes the proof of Lemma~\ref{lma:k<dstronger}.
\end{proof}

Let $G$ be an edge-coloured graph with $\delta^c(G) =d$ such that no p.c. cycle has length at least~$k$.
Suppose that one can prove that, if $d = \lceil 3k/2 \rceil-3$ then every maximal p.c. path in $G$ has length at least $k2^{d-k+2}-2$.
Then, the proof of Lemma~\ref{lma:k<dstronger} would show that Conjecture~\ref{conj:k<d} is true for all $d \ge  \lceil 3k/2 \rceil-3$.

\section{The longest p.c. path} \label{sec:path}
In this section, we prove Theorem~\ref{thm:path}.
A \emph{directed graph} is a pair $H = (V(H),A(H))$, where $V(H)$ is a finite set of vertices and $A(H)$ is a set of ordered pairs of vertices.
We refer to directed edges in $H$ as \emph{arcs}.
A graph $G$ is the \emph{base graph of $H$} if $V(G) = V(H)$ and $E(G) = \{ uv :  (u,v) \in A(H) \}$.

\begin{proof}[Proof of Theorem~\ref{thm:path}]
Let $c$ be an edge-colouring of a graph~$G$ such that $\delta^c(G) = d$.
Let $P=(1, 2, \dots, l)$ be a p.c. path in $G$ of maximum length.
Note that $P$ is maximal, so $N^c(1;P), N^c(l;P) \subseteq [l]$ implying that $l \ge d+1$.
Assume that $l <  6d/5 $, or else there is nothing to prove.
Thus, $ d \ge 6$.
We may further assume that there is no p.c. cycle $C$ spanning $[l]$.
Otherwise, $C$ is a p.c. Hamiltonian cycle if $|G| = l$ or we can find a p.c. path of length $l$ by connectedness of $G$ if $l < |G|$.
Since $l < 6d/5 $, $P$ has a crossing for all choices of $N^c(1;P)$ and $N^c(l;P)$.
By Lemma~\ref{lma:crossing}, there exist a p.c. cycle $C$ and a p.c. path $Q$ such that 
\begin{enumerate}
	\item[(i)] $C = (i_1,i_2 \dots, i_p,i_1)$ with $p \ge d+1$;
	\item[(ii)] $Q = (i'_1,i'_2 \dots, i'_q)$;
	\item[(iii)] $V(C) \cap V(Q) = \emptyset$ and $V(P) = V(C) \cup V(Q)$;
	\item[(iv)] if $q \ge 2$, then there exists $j \in [p]$ with $(i'_1,i_j) \in E(G)$ and $c(i'_1,i'_2) \ne c(i'_1,i_j)$.
\end{enumerate}
Note that $p+q =l$ and $q\ge 1$.
We may assume that $q$ is minimal.
Furthermore, by a cyclic shift and a reflection on $(i_1, i_2,\dots, i_p)$ if necessary, we may assume that $(i'_1,i_p) \in E(G)$ and if $q \ge 2$, then  
\begin{align}
c(i'_1,i'_2) \ne c(i'_1,i_p) \ne c(i_{p-1},i_p). \label{eqn:iv}
\end{align}
Therefore, $P' = (i_1,i_2 \dots, i_p,i'_1,i'_2 \dots, i'_q)$ is a p.c. path. 
Now fix $N^c(i'_q; P')$.
By the maximality of $l$, $P'$ is a maximal path and so $N^c(i'_q; P') \subseteq V(P') = V(P)$.
Note by (i) that $q = l -p \le l-d-1$.
Hence,
\begin{align*}
	|V(C) \cap N^c(i'_q; P')| \ge d - q+1 \ge 2d -l +2.
\end{align*}
Define 
\begin{align*}
	R = &\{ i_j \in V(C) \cap N^c(i'_q; P') : c(i'_q,i_j) \ne c(i_j,i_{j+1} ) \}, \text{ and} \\
	R' =& \{ i_{j -1} : i_j \in R \}, 
\end{align*}
where we take $i_{0}$ to be $i_p$.
By reversing the order of $(i_1, i_2,\dots, i_p)$ if necessary, we may assume that
\begin{align}
	|R'| = |R| \ge |V(C) \cap N^c(i'_q; P')|/2 \ge (2d-l+2)/2. \label{eqn:|R'|}
\end{align}
For distinct $i_j, i_{j'} \in R'$, observe that 
\begin{align*}
c(i_{j+1},i_{j+2}) \ne c(i'_q,i_{j+1}) \ne c(i'_q,i_{j'+1}) \ne c(i_{j'+1},i_{j'+2}).
\end{align*}
If $(i_j, i_{j'})$ is an edge with $c(i_j, i_{j-1}) \ne c(i_j, i_{j'}) \ne c(i_{j'}, i_{j'-1})$, then $i_j$ and $i_{j'}$ are at least distance 2 apart in $C$ and moreover $$C' = (i'_q, i_{j+1}, i_{j+2}, \dots, i_{j'}, i_{j}, i_{j-1}, \dots, i_{j'+1},i'_q)$$
is a p.c. cycle, see Figure~\ref{fig:thm6}.
\begin{figure}[tbp]
\begin{center}
\includegraphics[scale=0.6]{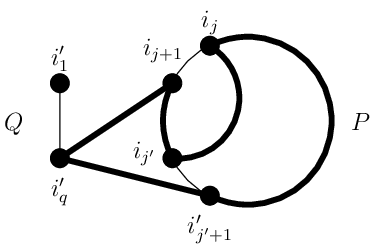}
\end{center}
\caption{Cycle $(i'_q, i_{j+1}, i_{j+2}, \dots, i_{j'}, i_{j}, i_{j-1}, \dots, i_{j'+1},i'_q)$}
\label{fig:thm6}
\end{figure}
However, this contradicts the minimality of~$q$ by setting $C = C'$ and $Q = (i'_1,i'_2 \dots, i'_{q-1})$, where condition (iv) is satisfied by~\eqref{eqn:iv}.
Thus, if $(i_j, i_{j'})$ is an edge for $i_j, i_{j'} \in R'$, then $c(i_j, i_{j'}) = c(i_{j}, i_{j-1})$ or $c(i_j, i_{j'}) = c(i_{j'}, i_{j'-1})$.

Define a directed graph $H$ on $R'$ such that there is an arc from $i_j$ to $i_{j'}$ unless $(i_j, i_{j'})$ is an edge and $c(i_j, i_{j'}) \ne c(i_{j'}, i_{j'-1})$.
Note that the base graph of $H$ is complete.
Thus, there exists a vertex $i_{j_0} \in R'$ with in-degree at least $(|R'|-1)/2$ in~$H$.
This means that
\begin{align}
& |\{i_j \in R' \setminus \{i_{j_0}\} : \text{$(i_j, i_{j_0}) \notin E(G)$ or $c(i_j, i_{j_0}) = c(i_{j_0}, i_{j_0-1})$}\}| \nonumber \\
& \ge   (|R'|-1)/2 \ge (2d-l)/4 \label{eqn:|R'|2}
\end{align}
by~\eqref{eqn:|R'|}.
Recall that $i_{j_0+1} \in R \subseteq N^c(i_q';P')$, so $c(i'_q, i'_{q-1}) \ne c(i'_q, i_{j_0+1}) \ne c(i_{j_0+1}, i_{j_0+2})$ if $q \ge 2$.
So $P'' = (i'_1, i'_2, \dots, i'_q, i_{j_0+1}, i_{j_0+2}, \dots, i_{j_0} )$ is a p.c. path.
Since $P$ is a p.c. path of maximum length, each vertex $k \in V(G)$ such that $(i_{j_0}, k )$ is an edge and $c(i_{j_0},k) \ne c(i_{j_0}, i_{j_0-1})$ must be in $V(P'') =V(P) = [l]$.
There are at least $d-1$ such vertices as $\delta^c(G) = d$.
Therefore, together with~\eqref{eqn:|R'|2} we have
\begin{align}
	l \ge |\{i_{j_0}\}| + d-1 + (2d-l)/4 = (6d-l)/4, \nonumber
\end{align}
a contradiction as $l <  6d/5 $.
\end{proof}

\section{$k$-edge-colourings} \label{sec:deg}
In this section, we consider edge-colouring with bounded number of colours.
A \emph{$k$-edge-colouring} of a graph $G$ uses $k$ colours, $c_1, c_2, \dots, c_k$.
Let $G^{c_i}$ be the subgraph of $G$ induced by edges of colour~$c_i$.
For a $k$-edge-coloured graph $G$, define $\delta^{\rm mon}_k(G) = \min \{ \delta(G^{c_i}) : 1 \le i \le k \}$.
Chapter~16 of~\cite{MR2472389} gives a good survey on~$\delta^{\rm mon}_k(G)$.

Abouelaoualim, Das, Fernandez de~la Vega, Karpinski, Manoussakis, Martinhon and Saad~\cite{Manoussakis10} proved that if $G$ is a 2-edge-coloured graph with $\delta^{\rm mon}_2(G) = \delta \ge 1$, then $G$ has a p.c. path of length $2 \delta$.
They then conjectured that if $G$ is a $k$-edge-coloured graph with $k \ge 3$ and $\delta^{\rm mon}_k(G) = \delta \ge 1$, then $G$ has a p.c. path of length $\min \{|G|-1, 2k \delta\}$.
By modifying the construction of~$\widehat{G}(n,d)$ in Example~\ref{exm:longpath}, we show that this conjecture is false.
Moreover, for every $\delta \ge 1$ and $k \ge 3$, there exists a $k$-edge-coloured graph $G$ such that $\delta^{\rm mon}_k(G) \ge \delta$ and no p.c. path has length more than $\lfloor 3\delta (k+ \eps)/2 \rfloor$, where $\eps =1$ if $k$ is even, and $\eps =0$ otherwise.
It is well known (and easy to see) that for odd $n \ge 3$ a complete graph $K_n$ of order $n$ has edge-chromatic number~$n$. 
Moreover, $K_n$ has a nearly $1$-factorization, that is an edge-decomposition into $n$ matchings each of size $(n-1)/2$, see~\cite{MR785649}.
Hence, we get the following simple fact, which we include the proof for completeness.

\begin{prp} \label{prp:K_n}
Let $n \ge 3$ be an odd integer.
Then, there exists a proper $n$-edge-colouring on $K_n$ with colours $c_1, \dots, c_k$.
Moreover, each vertex in $K_n$ misses a distinct colour.
\end{prp}

\begin{proof}
By Vizing's Theorem (see~\cite{BollobasModernGraphTheory}), the edge-chromatic number $\chi(K_n)$ is either $n-1$ or $n$.
Let $c$ be a proper edge-colouring on $K_n$ with colours $c_1, \dots, c_{\chi(K_n)}$.
Since $K_n^{c_i}$ is a matching for each $i$, $K_n^{c_i}$ spans at most $(n-1)/2$ edges.
By summing $e(K_n^{c_i})$, we deduce that $\chi(K_n) = n$.
Moreover, every $K_n^{c_i}$ is a matching of size exactly $(n-1)/2$ and the proposition follows.
\end{proof}

\begin{prp} \label{prp:counterexample}
Let $k \ge 3$ and $\delta \ge 1$ be integers.
Let $\eps =1$ if $k$ is even, and $\eps =0$ otherwise. 
Then, for each integer $n > (k+ \eps)\delta$, there exists a $k$-edge-coloured graph $G$ of order $n$ with $\delta^{\rm mon}_k(G) = \delta$ such that no p.c. path in~$G$ has length more than $\lfloor 3\delta (k+ \eps)/2 \rfloor$.
\end{prp}

\begin{proof}
Suppose $\delta=1$.
Partition $V(G)$ into $X$ and $Y$ with $X = \{x_1, x_2, \dots, x_{k+ \eps}\}$, so $Y \ne \emptyset$.
First we construct $G[X]$.
We split into two cases depending on the parity of $k$.

For $k$ odd, let $G[X]$ be a complete graph.
Since $|X| = k $ is odd, by Proposition~\ref{prp:K_n} we can properly edge-colour $G[X]$ with colours $c_1, c_2, \dots, c_k$.
Moreover, we may assume that $x_i$ is not incident with an edge of colour~$c_i$ for each $i \in [k]$.

For $k$ even, let $H$ be a complete graph on $X$.
Since $|X| = k+1$ is odd, by Proposition~\ref{prp:K_n} we can properly edge-colour $H$ with colours $ c_1, c_2, \dots, c_{k+1}$.
Again, we may assume that $x_i$ is not incident with an edge of colour~$c_i$ for each $i \in [k+1]$.
Let $G[X]$ be the $k$-edge-coloured subgraph obtained from $H$ after removing the edges with colour $c_{k+1}$, i.e. $G[X] = H - H^{c_{k+1}}$.

In summary, $G[X]$ is a properly $k$-edge-coloured graph with colours $\{ c_1, c_2, \dots, c_k\}$.
Moreover, each $x_i \in X$ is incident with edges of colours~$\{c_1, \dots, c_k\} \setminus \{c_i\}$.
Let $G[Y]$ be empty.
For each $y \in Y$, add an edge of colour $c_i$ between $y$ and $x_i$ for every $i \in [k]$.
By our construction, $\delta^{\rm mon}_k(G) = 1$.
Moreover, for each pair $y,y' \in Y$, every p.c. path from $y$ to $y'$ must contain at least two vertices in~$X$.
Thus, no path in $G$ has length more than $\lfloor 3|X|/2 \rfloor = \lfloor 3(k+ \eps)/2 \rfloor$.

For $\delta \ge 2$, the proposition is proved by considering a $\delta$-blow-up of~$G$, that is, replace  each vertex of $G$ by $\delta$ independent vertices, and add an edge of colour $c_i$ between each copy of $v$ and each copy of $u$ if and only if $u$ and $v$ are joined by an edge of colour~$c_i$ in~$G$. 
\end{proof}

On the other hand, we show that if $G$ is a $k$-edge-coloured connected graph with $\delta_k^{mon}(G) \ge \delta$, then there exists a p.c. path of length at least $(10(k-1)\delta-2)/9$ or $G$ contains a p.c. Hamiltonian cycle.

\begin{thm} \label{thm:pathmondeg}
Given an integer $k \ge 2$, every $k$-edge-coloured connected graph $G$ with $\delta^{\rm mon}_k(G) \ge 1$ contains a properly coloured path of length at least $(10(k-1)\delta^{\rm mon}_k(G) -2)/9$ or a properly coloured Hamiltonian cycle.
\end{thm}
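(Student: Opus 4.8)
The plan is to run the rotation–extension argument of Theorem~\ref{thm:path}, replacing the colour-degree bookkeeping by a monochromatic-degree count. Write $\delta = \delta^{mon}_k(G)$ and put $D = (k-1)\delta$; the target is to produce a p.c. path on at least $(10D+8)/9$ vertices. The elementary observation driving everything is that if a vertex $v$ is incident to a colour $c_i$, then it is incident to at least $\delta$ edges of colour $c_i$, so for any single colour $c_0$ seen at $v$ the number of neighbours $u$ with $c(vu)\ne c_0$ is at least $(d^c(v)-1)\delta$. First I would take a longest p.c. path $P=(1,2,\dots,l)$, assume $G$ has no p.c. Hamiltonian cycle, and suppose for contradiction $l<(10D+8)/9$. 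Maximality of length forces every colour-avoiding neighbour of an endpoint of any $P'\in\mathcal{R}(P)$ to lie in $V(P)$: this is precisely the mechanism by which $\delta$ (rather than a single colour-neighbour per colour) is fed into the count.

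If $P$ has no crossing, the colour-avoiding neighbourhoods of the two endpoints sit in essentially disjoint blocks of positions along $P$, each of size at least $(d^c-1)\delta$, which (when the endpoints have colour degree close to $k$) already forces $l$ well beyond the target. So I would assume $P$ has a crossing and extract, in the manner of Lemma~\ref{lma:crossing}, a p.c. cycle $C=(i_1,\dots,i_p,i_1)$ together with a spanning p.c. path $Q=(i'_1,\dots,i'_q)$ of $G[V(P)]$ attached to $C$, chosen with $|Q|$ minimal. Relabelling $C$ so that $Q$ hangs off $i_p$ gives a p.c. path $P'=(i_1,\dots,i_p,i'_1,\dots,i'_q)$ whose terminal vertex $i'_q$ then has its entire colour-avoiding neighbourhood inside $V(P)$; since $q=l-p$ and $p$ is of order $D$, a positive proportion of that neighbourhood lies on $C$.

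The heart of the argument is the same chord analysis on $C$ as in the proof of Theorem~\ref{thm:path}. Setting $R=\{i_j\in V(C)\cap N^c(i'_q;P'):c(i'_q,i_j)\ne c(i_j,i_{j+1})\}$ and $R'=\{i_{j-1}:i_j\in R\}$, any suitably coloured chord $(i_j,i_{j'})$ between two members of $R'$ yields a p.c. cycle $C'$ giving a shorter attachment of $Q$, contradicting minimality of $|Q|$; hence most chords repeat the colour of an incident cycle edge. Forming the auxiliary digraph $H$ on $R'$ whose base graph is complete, I would pick $i_{j_0}\in R'$ of large in-degree, rotate $Q$ so that the resulting p.c. path terminates at $i_{j_0}$, and then use maximality of $P$ to place all $(d^c(i_{j_0})-1)\delta$ colour-avoiding neighbours of $i_{j_0}$ inside $V(P)$. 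Summing the three disjoint contributions — the vertex $i_{j_0}$, its colour-avoiding neighbours, and the ``bad'' vertices of $R'$ detected by $H$ — gives a lower bound on $l$ contradicting $l<(10D+8)/9$.

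The hard part, and exactly what degrades the constant from $6/5$ to $10/9$, is two-fold. First, Lemma~\ref{lma:crossing} is only established under $\delta^c(G)\ge 3$, whereas in this setting a vertex may see as few as one colour; so I must either split off the cases $\delta^c(G)\le 2$ and treat them by hand, or re-derive the cycle/path extraction from the crossing directly in terms of monochromatic degrees. Second, and more seriously, the quantity $(d^c(v)-1)\delta$ is only useful when $v$ has large colour degree, and unlike Theorem~\ref{thm:path} there is no uniform lower bound on $d^c$ — a degenerate endpoint of colour degree one contributes nothing. The crux will therefore be to guarantee that the pivot vertex $i_{j_0}$ (and the endpoints in the no-crossing case) can be chosen of colour degree near $k$, or, failing that, to carry the colour degrees of the relevant vertices as weights through the chord-counting and the in-degree argument so that $\delta$ is always multiplied by enough colours; balancing these weighted contributions against the length of $C$ is what I expect to force the weaker $10/9$ factor.
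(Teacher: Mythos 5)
Your plan has a fatal quantitative gap at its centre: you transplant the skeleton of Theorem~\ref{thm:path}, whose engine is that Lemma~\ref{lma:crossing} returns a p.c.\ cycle $C$ of length $p \ge \delta^c(G)+1$, where in that theorem $\delta^c(G)$ is the \emph{large} parameter. In the present setting the large parameter is $D=(k-1)\delta$, while the colour degree of every vertex is exactly $k$ (under the intended reading of $\delta^{mon}_k(G)\ge\delta$, which the paper's proof makes explicit, every vertex is incident with at least $\delta$ edges of \emph{each} of the $k$ colours). So even where Lemma~\ref{lma:crossing} applies (it needs colour degree at least $3$, hence $k\ge 3$) it only guarantees $p\ge k$, a constant; your assertion that ``$p$ is of order $D$'' has no support, and nothing in the paper produces a p.c.\ cycle of length comparable to $(k-1)\delta$. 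With $p$ bounded and $q=l-p$ nearly $l$, the pivotal estimate $|V(C)\cap N^c(i'_q;P')|\ge d-q+1$ from Theorem~\ref{thm:path} becomes vacuous (indeed negative), so the chord analysis on $C$, the digraph on $R'$, and the final summation all collapse. The paper's actual proof of Theorem~\ref{thm:pathmondeg} never extracts a cycle at all: it works directly on the longest path $P$, sets $A$ and $B$ to be the colour-avoiding neighbourhoods of the two endpoints (each of size at least $(k-1)\delta$), forms the crossing set $I=\{i: i+1\in A,\ i\in B\}$ with $|I|\ge 2(k-1)\delta-l+1$, notes that any $i\in I$ whose two chords are both ``properly'' coloured would yield a spanning p.c.\ cycle, refines $I$ to $I_1$ and then to sets $R$, $S$, $T$ of endpoints of explicit rotated p.c.\ paths, and runs the in-degree argument on a \emph{bipartite} digraph between $R$ and $S\cup T$ (with a correction term for $S\cap T$), arriving at $l \ge 1+(k-1)\delta+(2(k-1)\delta-l)/8$ and hence the $10/9$ constant.

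A secondary point: your reading of the hypothesis (``if a vertex is incident to a colour $c_i$, then it is incident to at least $\delta$ edges of colour $c_i$'') is weaker than what the theorem and its proof use, namely that every vertex meets every one of the $k$ colours at least $\delta$ times; consequently $d^c(v)=k$ for all $v$ and every vertex has at least $(k-1)\delta$ neighbours avoiding any one prescribed colour. The ``degenerate endpoint of colour degree one'' that you flag as the crux cannot occur, so the weighted bookkeeping you propose addresses a non-problem --- and in any case it would not repair the missing long cycle, which is where the proposal actually fails.
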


\begin{proof}
Let $G$ be a connected graph with a $k$-edge-colouring $c$ such that $\delta^{\rm mon}_k(G) = \delta \ge 1$.
Let $P = (1, 2, \dots, l)$ be a p.c. path in $G$ of maximum length.
We may assume that $l < (10(k-1)\delta+7)/9$ or else there is nothing to prove.
We may further assume that no p.c. cycle spans the vertex set $[l]$, otherwise there exists a p.c. path of length~$l$ as $G$ is connected if $|G| > l$, or $G$ has a p.c. Hamiltonian cycle if $|G|=l$.

Define $A$ to be the set of vertices $v$ such that $(1,v)$ is an edge with $c(1,v) \ne c(1,2)$.
Note that $|A| \ge (k-1)\delta$ and $A \subseteq [3,l]$ by the maximality of $P$.
Similarly, define $B$ to be the set of vertices $v$ such that $(l,v)$ is an edge with $c(l,v) \ne c(l,l-1)$.
By a similar argument, $|B| \ge (k-1)\delta$ and $B \subseteq [l-2]$.
Let 
$$ I = \{i \in [l-1] : i+1 \in A \text{ and }i \in B \}. $$ 
Therefore,
\begin{align}
|I| \ge 2(k-1)\delta-l+1 \text{ and } I \subseteq [2,l-2]. \label{eqn:|I|}
\end{align}
Let $$I_1 = \{ i \in I \cap [3,l-3]: c(1,i+1) = c(i+1,i+2)\} \subseteq [3,l-3].$$
If there exists a vertex $i \in I$ with $c(1,i+1) \ne c(i+1,i+2)$ and $c(l,i) \ne c(i,i-1)$, then $(1,2,\dots,i,l,l-1,\dots,i+1,1)$ is a p.c. cycle spanning~$[l]$ as $i+1 \in A$ and $i \in B$.
Thus, $c(1,i+1)=c(i+1,i+2)$ or $c(l,i) = c(i,i-1)$ for all $i \in I $.
Hence, if $i \in I \setminus (I_1 \cup \{2, l-2\})$, then $c(l,i) = c(i,i-1)$.
Without loss of generality (by replacing $P$ with its reflection if necessary), we may assume by~\eqref{eqn:|I|} that
\begin{align}
|I_1| \ge (|I|-2)/2 \ge (2(k-1)\delta-l-1)/2. \nonumber
\end{align}
Since $I_1 \subseteq [3, l-3] $, there exists $i_0 \in [l]$ such $|I_1 \cap [3, i_0]|, |I_1 \cap [i_0,l-3] \ge |I_1| / 2$.
By removing at most one vertex from $I_1$, we further assume that $|I_1 \cap [3, i_0]| = |I_1 \cap [i_0,l-3] \ge (2(k-1)\delta-l-1)/4$.
Let 
\begin{align*}
R = & \{ i+2 : i \in I_1\cap [i_0,l-3] \}, \\
S = & \{ i+1: i \in I_1 \cap [3,i_0]\text{ and } c(i,l) \ne  c(i,i-1)\}, \text{ and}\\
T = &\{ i-1: i \in I_1\cap [3,i_0] \text{ and } c(i,l) =  c(i,i-1)\}.
\end{align*}
In summary, we have the following:
\begin{enumerate}
\item[(a)] for $r \in R$, $c(1,2) \ne c(1,r-1) = c(r-1,r) \ne c(r-1,r-2)$,
\item[(b)] for $s \in S$, $c(l,l-1) \ne c(l,s-1) \ne c(s-1,s-2)$,
\item[(c)] for $t \in T$, $c(l,l-1) \ne c(l,t+1) = c(t+1,t) \ne c(t+1,t+2)$,
\item[(d)] $1 <   \min\{ u \in S \cup T \} \le \max\{u \in S \cup T\} < \min\{r \in R\} \le \max \{ r \in R\} < l$,
\item[(e)] $|R| =  |S|+|T| \ge (2(k-1)\delta-l-1)/4$
\end{enumerate}
From the definition of $I$ and (a)--(c), we deduce that 
\begin{enumerate}
\item[(f)] $(r,r+1,\dots, l, s-1,s-2, \dots, 1, r-1,r-2, \dots, s)$ is a p.c. path for any $r \in R$ and $s \in S$
\item[$(g)$] $(r,r+1,\dots, l, t+1,t+2, \dots, r-1, 1, 2, \dots, t)$ is a p.c. path for any $r \in R$ and $t \in T$.
\end{enumerate}
See Figure~\ref{fig:C4}~(a) and~(b).

\begin{figure}[tbp]
\begin{center}
  \subfloat[Path $(r,r+1,\dots, l, s-1,s-2, \dots, 1, r-1,r-2, \dots, s)$]{\includegraphics[scale=0.8]{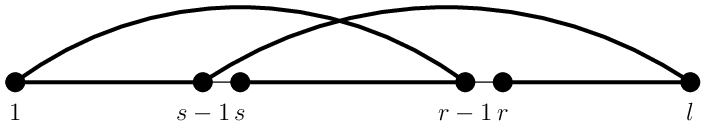}}\\
  \subfloat[Path $(r,r+1,\dots, l, t+1,t+2, \dots, r-1, 1, 2, \dots, t)$]{\includegraphics[scale=0.8]{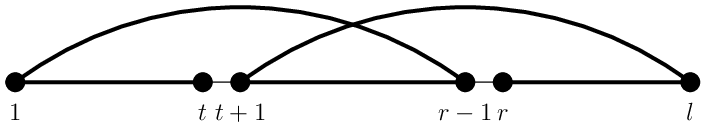}}
\end{center}
\caption{}
\label{fig:C4}
\end{figure}

Define a directed bipartite graph $H$ on vertex classes $R$ and $S \cup T$ such that for $r \in R$, $s \in S$, $t \in T$ and $u \in S \cup T$,
\begin{description}
\item[(i)] there is an arc from $u$ to $r$ unless $(r,u) \in E(G)$ with $c(r,u) \ne c(r,r+1)$,
\item[(ii)] there is an arc from $r$ to $s$ unless $(r,s) \in E(G)$ with $c(r,s) \ne c(s,s+1)$,
\item[(iii)] there is an arc from $r$ to $t$ unless $(r,t) \in E(G)$ with $c(r,t) \ne c(t,t-1)$.
\end{description}
Note that if $(r,u)$ is not an edge in $G$ for $r \in R$ and $u \in S \cup T$, then both arcs $( r , u)$ and $( u , r)$ are in $H$.
Recall that no p.c. cycle spans $[l]$.
If $(r,s)$ is an edge in $G$ for $r \in R$ and $s \in S$, then by (f) we have $c(r,s) = c(r,r+1) $ or $c(r,s) = c(s,s+1)$.
Thus, for any $r \in R $ and $s \in S$, we have $(r,s) \in A(H)$ or $(s,r) \in A(H)$.
If $(r,t)$ is an edge in $G$ for $r \in R$ and $t \in T$, then by (g) we have $c(r,t) = c(r,r+1) $ or $c(r,t) = c(t,t-1)$.
Thus, for any $r \in R $ and $t \in T$, $(r,t) \in A(H)$ or $(t,r) \in A(H)$.
Therefore, the base graph of $H$ is complete bipartite.
Moreover, if $(r,u)$ is an edge in $G$ with $r \in R$ and $u \in S \cap T$, then $c(r,u) = c(r,r+1)$ and so there is an arc from $u \in S \cap T$ to $r \in R$ in $H$.

Suppose that $H$ has maximum in-degree $\Delta_-(H)$.
Let $H' = H \setminus (S \cap T)$ and let $m = |S \cap T|$.
Given a vertex $v \in V(H')$, denote by $d_-(v)$ the in-degree of $v$ in~$H'$.
By the observation above, $d_-(r) \le (\Delta_-(H) -m)$ for all $r \in R$ and $d_-(u) \le \Delta_-(H)$ for all $u \in (S \cup T) \setminus (S \cap T)$.
Since the base graph of $H'$ is complete bipartite, by summing the in-degrees of $H'$ and (e) we have 
\begin{align*}
	|R| |(S \cup T) \setminus (S \cap T)|
	&\le e(H') \le \sum_{r \in R} d_-(r) + \sum_{u \in (S \cup T) \setminus( S \cap T)} d_-(u), \\
	 |R| (|S|+|T|-2m) & \le  (\Delta_-(H) -m)|R| + \Delta_-(H)(|S| + |T| - 2m),\\
	|R| (|R|-2m) &\le(\Delta_-(H) -m)|R| + \Delta_-(H)(|R| - 2m), \\
	 \Delta_-(H) & \ge |R|/2 \ge (2(k-1)\delta-l-1)/8,
\end{align*}
so there exists a vertex $x \in R \cup S \cup T$ with in-degree at least $(2(k-1)\delta-l-1)/8$ in~$H$.
Suppose $x \in R$.
This means that
\begin{align}
 |\{u \in S \cup T : \text{$(x, u) \notin E(G)$ or $c(x, u) = c(x, x+1)$}\}| 
& \ge  (2(k-1) \delta-l-1)/8 \label{eqn:thm:mono}.
\end{align}
Note that (f) implies that there exists a p.c. path $P' = (x,x+1, \dots)$ spanning $V(P)$.
Since $P$ is a p.c. path of maximal length, each vertex $j \in V(G)$ such that $(x, j )$ is an edge and $c(x, j) \ne c(x,x+1)$ must be in~$V(P') = [l]$.
There are at least $(k-1)\delta$ such vertices as $\delta^{\rm mon}_k(G) = \delta$.
Therefore, together with~\eqref{eqn:thm:mono} we have
\begin{align}
	l \ge |\{x\}| + (k-1)\delta + (2(k-1)\delta-l-1)/8 = (10(k-1)\delta-l +7)/8, \nonumber
\end{align}
so $l \ge (10(k-1)\delta+7)/9$, a contradiction.
A similar argument holds by (f) if $x \in S$ and by (g) if $x \in T$.
\end{proof}

\section*{Acknowledgment}

The author would like to thank S. Fujita, T.S. Tan and especially the anonymous referees for the helpful comments, the careful reviews and the suggestions on the presentation.

\end{document}